\definecolor{trp}{rgb}{1,1,1}
\definecolor{red}{rgb}{1,0,.2}
\theoremstyle{plain}
\newtheorem{theorem}{Theorem}[section]
\newtheorem{corollary}[theorem]{Corollary}
\newtheorem{definition}[theorem]{Definition}
\newtheorem{example}[theorem]{Example}
\newtheorem{lemma}[theorem]{Lemma}
\newtheorem{proposition}[theorem]{Proposition}
\newtheorem{remark}[theorem]{Remark}
\newtheorem*{terminology}{Terminology}
\numberwithin{equation}{section}
\newcommand{\iiv}{\overline{\imath}}
\newcommand*{\arabicdec}[1]{\the\numexpr\value{#1}\relax}
\newcommand{\nice}{limit-irreducible}
\definecolor{blue}{rgb}{0,0,1}
\definecolor{red}{rgb}{1,0,.7}
\definecolor{myred1}{RGB}{255, 0, 0}
\begin{document}
\title[ ]{Continuity of the natural dimension of piecewise linear iterated function systems}

\author{R. D\'aniel Prokaj}
\address{ R. D\'aniel Prokaj,  Alfréd Rényi Institute of Mathematics, 
Reáltanoda u. 13-15., 1053 Budapest, Hungary} 
\email{prokajrd@math.bme.hu}

\author{Peter Raith}
\address{Peter Raith, Fakultät für Mathematik, Universität Wien, 
Oskar-Morgenstern-Platz 1, 1090 Wien, Austria}
\email{peter.raith@univie.ac.at}


\thanks{2020 {\em Mathematics Subject Classification.} Primary 28A80, Secondary 37E05.
\\ \indent
{\em Key words and phrases.} piecewise linear iterated function system, Hausdorff dimension
\\ \indent 
The research of the first author was partially supported by National Research, Development and Innovation Office - NKFIH, Project K142169. This work was partially supported by the grant Stiftung Aktion Österich Ungarn 103öu6. 
}  

\begin{abstract}
We consider iterated function systems on the real line that consist of continuous, piecewise linear functions. We show that typically the natural dimension of these systems changes continuously with respect to the parameters that define the system. As an application of this property, we prove a result on the positivity of the Lebesgue measure of the attractor.
\end{abstract}
\date{\today}

\maketitle


\thispagestyle{empty}
\section{Introduction}\label{md20}

Iterated Function Systems (IFS) on the line consist of finitely many strictly contracting self-mappings of $\mathbb{R}$. In this paper, we consider IFSs consisting of piecewise linear functions. We always assume that the functions are continuous, piecewise linear, strongly contracting with non-zero slopes, and that the slopes can only change at finitely many points.

It was proved by Hutchinson \cite{hutchinson1981fractals}
that for every IFS $\mathcal{F}=\left\{f_k\right\}_{k=1}^{m}$ there is a unique non-empty compact set $\Lambda$ which is
called the attractor of the IFS $\mathcal{F}$ and defined by
 \begin{equation}\label{cr65}
   \Lambda=\bigcup\limits_{k=1}^{m}f_k(\Lambda).
 \end{equation}

For every IFS $\mathcal{F}$ there exists a unique ``smallest'' non-empty compact interval $I$  which is sent into itself by all the mappings of $\mathcal{F}$:
\begin{equation}\label{cr22}
  I:=\bigcap\left\{ J \big\vert\:
  J\subset \mathbb{R}\mbox{ compact interval} :
  f_k(J)\subset J, \forall k\in[m]
  \right\},
\end{equation}
where $[m]:=\left\{ 1,\dots  ,m \right\}$. To guarantee that $I$ is a non-degenerate interval, when the attractor of $\mathcal{F}$ is a single point we set 
\begin{equation*}
  I:=\left[\phi-\frac{1}{2},\phi+\frac{1}{2}\right],
\end{equation*}
where $\phi$ is the common fixed point of the functions $f_1,\dots, f_m$.
It is easy to see that 
\begin{equation}\label{cr15}
  \Lambda=\bigcap\limits_{n=1}^{\infty}
  \bigcup\limits_{(i_1,\dots  ,i_n)\in[m]^n}
  I_{i_1\dots  i_n},
\end{equation}
where $I_{i_1\dots  i_n}:=
f_{i_1\dots  i_n}(I)$ are the \textbf{cylinder intervals}, and we use the common shorthand notation $f_{i_1\dots  i_n}:=f_{i_1}\circ\cdots \circ f_{i_n}$ for an
$(i_1,\dots  ,i_n)\in [m]^n$. 

We define the \textbf{natural pressure function} as  
\begin{equation}\label{cr64}
    \Phi(s):=\limsup_{n\rightarrow\infty}\frac{1}{n}\log \sum_{i_1\dots i_n} |I_{i_1\dots i_n}|^s.
\end{equation}
In \cite{barreira1996non}, Barreira showed that $\Phi(s): \mathbb{R}_+\to \mathbb{R}$ is a strictly decreasing function with $\Phi(0)>0$ and $\lim_{s\to\infty} \Phi(s)=-\infty$. Hence we can define the \textbf{natural dimension} of $\mathcal{F}$ as  
\begin{equation}\label{cr61}
    s_{\mathcal{F}}:=(\Phi)^{-1}(0).
\end{equation}
He also proved that the upper box dimension, and hence the Hausdorff dimension, is always smaller or equal to the natural dimension.

Let $\mathcal{F}=\left\{f_k\right\}_{k=1}^{m}$ be a CPLIFS and 
$I \subset \mathbb{R}$ be the compact interval defined in \eqref{cr22}. For any $k\in [m]$
let $l(k)$ be the number of breaking points $\left\{b_{k,i}\right\}_{i=1}^{l(k)}$ of $f_k$.
They determine the $l(k)+1$ open intervals of linearity
$\left\{J_{k,i}\right\}_{i=1}^{l(k)+1}$.
A more detailed description of the parameter space is given in Section \ref{pm75}.

A continuous piecewise linear IFS $\mathcal{F}=\{f_k\}_{k=1}^m$ is uniquely determined by the slopes $\{\rho_{k,1},\dots,\rho_{k,l(k)+1}\}_{k=1}^m$, the breaking points $\{b_{k,1},\dots,$ $b_{k,l(k)}\}_{k=1}^m$ and the vertical translations $\{f_k(0)\}_{k=1}^m$ of its functions. We often refer to the latter two as the translation parameters of $\mathcal{F}$. 
We rely on the following notion of typicality.

\begin{terminology}\label{z99} 
  Given a property which is meaningful for all CPLIFSs.
  We say that this property is $\pmb{\dim_{\rm P}}$\textbf{-typical} if the set of translation parameters for which it does not hold has less than full packing dimension, for any fixed vector of slopes.
\end{terminology}

We write $S_{k,i}$ for the contracting similarity on $\mathbb{R}$ that satisfies
$S_{k,i}|_{J_{k,i}}\equiv f_k|_{J_{k,i}}$, 
and define $\left\{\rho_{k,i}\right\}_{k\in[m],i\in [l(k)+1]}$ and $\left\{t_{k,i}\right\}_{k\in[m],i\in [l(k)+1]}$ such that
\begin{equation}\label{cr56}
  S_{k,i}(x)=\rho_{k,i}x+t_{k,i}.
\end{equation}
We say that $\mathcal{S}_{\mathcal{F}}:=\left\{S_{k,i}\right\}_{k\in[m],i\in [l(k)+1]}$ is the \textbf{self-similar IFS generated by the CPLIFS $\mathcal{F}$}.

Now we define a separation condition for self-similar iterated function systems on the line.
Let $g_1(x)=\rho_1x+\tau_1$ and $g_2(x)=\rho_2x+\tau_2$ be two similarities on $\mathbb{R}$ with $\rho_1,\rho_2\in\mathbb{R}\setminus \{0\}$ and $\tau_1,\tau_2\in\mathbb{R}$.
We define the distance of these two functions as 
\begin{equation}\label{md17}
\mathrm{dist}(g_1,g_2):=\begin{cases}
  \vert \tau_1-\tau_2\vert \mbox{, if } \rho_1=\rho_2; \\
  \infty \mbox{, otherwise.}
\end{cases}
\end{equation}
\begin{definition}\label{md16}
Let $\mathcal{F}=\{f_k(x)\}_{k=1}^m$ be a self-similar IFS on $\mathbb{R}$. We say that $\mathcal{F}$ satisfies the \textbf{Exponential Separation Condition (ESC)} if there exists a $c>0$ and a strictly increasing sequence of natural numbers $\{n_l\}_{l=1}^{\infty}$ such that 
\begin{equation}\label{md15}
  \mathrm{dist}(f_{\mathbf{i}},f_{\mathbf{j}})\geq c^{n_l}\mbox{, for all } l>0 \mbox{ and for all } \mathbf{i},\mathbf{j}\in [m]^{n_l}, \mathbf{i}\neq\mathbf{j}. 
\end{equation}
\end{definition}

\begin{theorem}[{\cite[Theorem~1.4]{prokaj2022fractal}}]\label{md46}
Let $\mathcal{F}$ be a CPLIFS with generated self-similar system $\mathcal{S}$ and attractor $\Lambda$. If $\mathcal{S}$ satisfies the ESC, then
\begin{equation}\label{md45}
  \dim_{\rm H}\Lambda = \dim_{\rm B}\Lambda = \min \{1, s_{\mathcal{F}}\}.
\end{equation} 
\end{theorem}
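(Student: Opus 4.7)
The plan is to squeeze $\dim_{\rm H}\Lambda$ and $\dim_{\rm B}\Lambda$ between matching upper and lower bounds equal to $\min\{1,s_{\mathcal{F}}\}$, mimicking Hochman's theorem for self-similar sets on $\mathbb{R}$. The upper bound is immediate: Barreira's result cited right after the definition of $s_{\mathcal{F}}$ gives $\dim_{\rm B}\Lambda\le s_{\mathcal{F}}$, and the trivial bound $\dim_{\rm B}\Lambda\le 1$ since $\Lambda\subset\mathbb{R}$ completes this side.

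For the lower bound I would approximate $\Lambda$ from inside by attractors of genuine self-similar sub-IFSs built from $\mathcal{S}_{\mathcal{F}}$. Fix a large $N$, and call a word $\mathbf{i}=(i_1,\dots,i_N)\in[m]^N$ \emph{pure} if at each stage $j$ the image $f_{i_j\cdots i_N}(I)$ lies entirely inside a single interval of linearity $J_{i_{j-1},\bullet}$ of the outer map $f_{i_{j-1}}$. For a pure word the composition $f_{\mathbf{i}}$ agrees on $I$ with a single similarity $g_{\mathbf{i}}\in\mathcal{S}_{\mathcal{F}}^{\circ N}$, and the collection $\mathcal{G}_N:=\{g_{\mathbf{i}}:\mathbf{i}\text{ pure of length }N\}$ is a self-similar IFS whose attractor $\Lambda_N$ satisfies $\Lambda_N\subset\Lambda$. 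Because every $g_{\mathbf{i}}$ is a length-$N$ composition of maps from $\mathcal{S}_{\mathcal{F}}$, distinct length-$n$ words of $\mathcal{G}_N$ give distinct length-$nN$ words of $\mathcal{S}_{\mathcal{F}}$, so $\mathcal{G}_N$ inherits the ESC from $\mathcal{S}_{\mathcal{F}}$ (with constants adjusted by $N$). Hochman's dimension theorem for ESC self-similar sets on $\mathbb{R}$ then yields $\dim_{\rm H}\Lambda_N=\min\{1,s(\mathcal{G}_N)\}$, where $s(\mathcal{G}_N)$ is the similarity dimension determined by $\sum_{\mathbf{i}\text{ pure}}|\rho_{\mathbf{i}}|^{s}=1$.

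The crux of the proof, and the step I expect to be the main obstacle, is establishing the convergence $s(\mathcal{G}_N)\to s_{\mathcal{F}}$ as $N\to\infty$. The pressure at level $N$ is governed by $\sum_{\mathbf{i}\in[m]^N}|I_{\mathbf{i}}|^s$, which naturally splits into a pure contribution (where $|I_{\mathbf{i}}|=|\rho_{\mathbf{i}}|\cdot|I|$) and an impure remainder. One has to argue that the impure contribution is exponentially negligible relative to the full sum, so that omitting it does not change the parameter $s$ solving the natural pressure equation in the limit. Heuristically this is plausible because there are only finitely many breakpoints $b_{k,i}$ and the maps are uniformly contracting, so at each level only a bounded number of descendants of a given cylinder can straddle a breakpoint; the count of impure words thus grows at most polynomially faster in geometric complexity than the count of pure ones, while the exponential scale is identical. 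Making this combinatorial estimate precise, and pairing it cleanly with the $\limsup$ in the definition of $\Phi(s)$ (possibly by passing to a subsequence on which the $\limsup$ is realised), is the technical heart of the argument. Once $s(\mathcal{G}_N)\to s_{\mathcal{F}}$ is in hand, one concludes $\dim_{\rm H}\Lambda\ge\sup_N\dim_{\rm H}\Lambda_N=\min\{1,s_{\mathcal{F}}\}$, which together with the upper bound proves \eqref{md45}.
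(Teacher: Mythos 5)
Your upper bound (Barreira plus $\Lambda\subset\mathbb{R}$) is fine, but the lower-bound scheme has a genuine gap at exactly the point you flagged, and the gap is not just technical: the pure-word approximation can fail completely. Pure words in your sense must act linearly on \emph{all} of $I$, and such words need not exist beyond bounded length. For instance, take $m=2$, $I=[0,1]$, $f_1(I)=[0,0.55]$, $f_2(I)=[0.45,1]$, with a breaking point of $f_1$ mapped from $0.5$ and one of $f_2$ from $0.48$ placed so that both $I_1$ and $I_2$ contain a breaking point of each map. Then for every word of length at least $2$ the innermost image $f_{i_N}(I)=I_{i_N}$ already straddles a breaking point of $f_{i_{N-1}}$, so $\mathcal{G}_N=\emptyset$ for all $N\geq 2$, while for typical translations the generated self-similar system still satisfies the ESC and the theorem still holds. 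Your combinatorial heuristic is also false in general: when a cylinder straddles a breaking point it splits into linear pieces each of which can straddle further breaking points, so the number of linear pieces of the maps $f_{\mathbf{i}}$, $\mathbf{i}\in[m]^N$, grows \emph{exponentially} (at rate $\varrho(\mathbf{F}(0))$ in the notation of Section 2), and in overlapping situations the natural pressure is carried by these broken cylinders, not by the pure ones. Hence $s(\mathcal{G}_N)\to s_{\mathcal{F}}$ cannot be salvaged by a polynomial count. (A secondary, fixable gloss: the ESC is only assumed along a subsequence $\{n_l\}$, so its inheritance by $\mathcal{G}_N$ at lengths $nN$ needs an argument — appending a common suffix preserves $\mathrm{dist}$ when the slopes agree, but words agreeing up to level $n_l$ require a separate lower bound on short-word separations.)

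The actual proof (in the cited reference, whose machinery this paper summarizes in Section 2) repairs precisely this defect by relaxing ``linear on all of $I$'' to ``linear on an element of the Markov diagram'': one tracks the monotonicity intervals and their successors, so that every point of $\Lambda$ is coded even when no composition is globally linear. The ESC implies limit-irreducibility (the mechanism of Lemma \ref{pm64}: a periodic orbit of a crossing point would force exact overlaps in $\mathcal{S}$), Proposition \ref{md81} then approximates $\varrho(\mathbf{F}(s))$ from below by finite irreducible subdiagrams $\mathcal{C}\subset\mathcal{D}$, and each such $\mathcal{C}$ determines a \emph{graph-directed} self-similar subsystem of $\Lambda$ to which a Hochman-type dimension theorem for graph-directed systems ({\cite[Corollary~7.2]{prokaj2021piecewise}}) applies; combining this with Lemma \ref{md84} and letting $\mathcal{C}$ exhaust $\mathcal{D}$ gives $\dim_{\rm H}\Lambda\geq\min\{1,s_{\mathcal{F}}\}$. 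Your construction corresponds to the special case of loops through a single vertex of the diagram, which is too restrictive; replacing your self-similar $\mathcal{G}_N$ by these graph-directed subsystems is the missing idea.
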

Hochman proved that the ESC is a $\dim_{\rm P}$-typical property of self-similar IFS \cite[Theorem~1.10]{Hochman_2015}. Prokaj and Simon extended this result by showing that it is a $\dim_{\rm P}$-typical property of a CPLIFS that the generated self-similar system satisfies the ESC \cite[Fact~4.1]{prokaj2021piecewise}.

It follows that typically the Hausdorff dimension of the attractor of a CPLIFS is equal to the minimum of the natural dimension and $1$.
\begin{theorem}[{\cite[Theorem~1.2]{prokaj2022fractal}}]\label{md10}
  We write $\Lambda _{\mathcal{F}}$ for the attractor of a CPLIFS  $\mathcal{F}$.
  Then the following property is $\dim_{\rm P} $-typical:
  \begin{equation}\label{md09}
    \dim_{\rm H}\Lambda_{\mathcal{F}} = \min \{1, s_{\mathcal{F}}\}.
  \end{equation}
\end{theorem}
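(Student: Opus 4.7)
The plan is to derive Theorem~\ref{md10} as a direct corollary of Theorem~\ref{md46} combined with the Prokaj--Simon typicality statement \cite[Fact~4.1]{prokaj2021piecewise} recalled just above. These two inputs do essentially all the work; the only task is to line them up and check monotonicity of packing dimension under inclusion.

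First I would fix an arbitrary admissible vector of slopes $\boldsymbol{\rho}=\{\rho_{k,i}\}_{k\in[m],\,i\in[l(k)+1]}$, so that within this slope class a CPLIFS $\mathcal{F}$ is parametrized by its translation parameters alone (the breaking points $b_{k,i}$ and the vertical translations $f_k(0)$). Let $E$ denote the set of translation parameters for which the generated self-similar IFS $\mathcal{S}_{\mathcal{F}}$ of \eqref{cr56} fails to satisfy the ESC from Definition~\ref{md16}. By \cite[Fact~4.1]{prokaj2021piecewise}, the packing dimension of $E$ is strictly less than that of the full parameter space.

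Next I would apply Theorem~\ref{md46} to every CPLIFS whose translation parameters lie in the complement of $E$. For any such $\mathcal{F}$, the hypothesis of Theorem~\ref{md46} is satisfied, and the theorem yields
\begin{equation*}
  \dim_{\rm H}\Lambda_{\mathcal{F}} = \min\{1, s_{\mathcal{F}}\}.
\end{equation*}
Contrapositively, the set of translation parameters on which \eqref{md09} fails is contained in $E$. By monotonicity of packing dimension, this exceptional set has less than full packing dimension. Since the slope vector was arbitrary, this matches the definition of $\dim_{\rm P}$-typicality in Terminology~\ref{z99}, finishing the argument.

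I do not expect any genuine obstacle in this short deduction itself; the substantive work has already been carried out elsewhere. The deep ingredients are Theorem~\ref{md46}, whose proof requires a Hochman-style dimension analysis adapted to piecewise linear compositions, and \cite[Fact~4.1]{prokaj2021piecewise}, which tracks how breaking points proliferate through compositions of the $f_k$ and translates this into the separation condition \eqref{md15} on long cylinders. Granting these two results, the present theorem is immediate.
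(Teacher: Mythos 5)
Your deduction is correct and is essentially the paper's own route: the text preceding Theorem~\ref{md10} derives it exactly as you do, by combining Theorem~\ref{md46} with the $\dim_{\rm P}$-typicality of the ESC for the generated self-similar system from \cite[Fact~4.1]{prokaj2021piecewise}, so that the exceptional set of translation parameters is contained in the ESC-failure set and hence has less than full packing dimension for every fixed slope vector. Nothing further is needed.
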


In this paper, we are going to assume an even weaker separation condition on the generated self-similar iterated function systems. 
\begin{definition}\label{pm57}
  We say that the self-similar IFS $\mathcal{S}=\{S_k\}_{k=1}^m$ has \textbf{no exact overlapping} if for all $n\geq 1$ and all $\mathbf{i},\mathbf{j}\in[m]^n$ we have 
  \begin{equation}
    S_{\mathbf{i}}\equiv S_{\mathbf{j}} \implies \mathbf{i}=\mathbf{j}.
  \end{equation}
\end{definition}
Examples of IFSs for which the exponential separation condition fails but there are no exact overlappings were given by Baker \cite{baker2021iterated} and Bárány, K{\"a}enm{\"a}ki \cite{barany2021super}.

The main goal of this paper is to prove results on the dependency of the natural pressure on the parameters of a CPLIFS. Our main result is the following. 
\begin{theorem}\label{pm78}
  Let $\mathcal{F}$ be a CPLIFS with generated self-similar IFS $\mathcal{S}$. Suppose that $\mathcal{S}$ has no exact overlapping. Then for every $\varepsilon>0$ there exists a $\delta>0$ such that for all CPLIFS $\widehat{\mathcal{F}}$ which is $\delta$-close to $\mathcal{F}$
  \[
    |s_{\mathcal{F}}-s_{\widehat{\mathcal{F}}}|<\varepsilon,
  \]
  where $s_{\mathcal{F}}$ and $s_{\widehat{\mathcal{F}}}$ are the natural dimensions of $\mathcal{F}$ and $\widehat{\mathcal{F}}$ respectively.
\end{theorem}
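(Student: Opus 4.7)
The plan is to show upper and lower semicontinuity of $\mathcal{F}\mapsto s_{\mathcal{F}}$ separately, exploiting that $\Phi_{\mathcal{F}}(\cdot)$ is strictly decreasing with unique zero $s_{\mathcal{F}}$. It suffices to show that for each $s\neq s_{\mathcal{F}}$, the sign of $\Phi_{\widehat{\mathcal{F}}}(s)$ coincides with that of $\Phi_{\mathcal{F}}(s)$ whenever $\widehat{\mathcal{F}}$ is sufficiently close to $\mathcal{F}$. The baseline continuous input is the following: for each fixed $\mathbf{i}\in[m]^n$ the composition $f_{\mathbf{i}}$ is piecewise linear with slopes, breakpoints and intercepts that are continuous functions of the parameters of $\mathcal{F}$, so $|I_{\mathbf{i}}|=|f_{\mathbf{i}}(I)|$ and the partial sum $\Sigma_n^{\mathcal{F}}(s):=\sum_{\mathbf{i}\in[m]^n}|I_{\mathbf{i}}|^s$ depend continuously on $\mathcal{F}$. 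The difficulty is that $\Phi_{\mathcal{F}}$ is a $\limsup$, so pointwise continuity at every finite level does not automatically imply continuity of the limit.

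\textbf{Lower semicontinuity.} Fix $s<s_{\mathcal{F}}$, so $\Phi_{\mathcal{F}}(s)=2c$ for some $c>0$. Choose $n_0$ with $\Sigma_{n_0}^{\mathcal{F}}(s)\geq e^{2cn_0}$. Select a subfamily $\mathcal{G}\subseteq[m]^{n_0}$ of words whose images $f_{\mathbf{i}}(I)$ are small compared to the minimum gap between breakpoints of the maps $f_{\mathbf{i}'}$ with $\mathbf{i}'\in[m]^{n_0}$; by taking $n_0$ large enough one can also arrange that $\sum_{\mathbf{i}\in\mathcal{G}}|I_{\mathbf{i}}|^s$ captures a fixed positive fraction of $\Sigma_{n_0}^{\mathcal{F}}(s)$. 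For a concatenation $\mathbf{i}_1\cdots\mathbf{i}_k$ with each $\mathbf{i}_j\in\mathcal{G}$, every nested image $f_{\mathbf{i}_{j+1}\cdots\mathbf{i}_k}(I)$ lies inside a single linearity piece of $f_{\mathbf{i}_j}$, so lengths concatenate quasi-multiplicatively with a uniform constant. This yields $\Sigma_{kn_0}^{\mathcal{F}}(s)\geq e^{ckn_0}$, and the analogous bound for $\widehat{\mathcal{F}}$ close to $\mathcal{F}$ follows from the baseline continuity at the single level $n_0$, because the good family $\mathcal{G}$ remains good under small perturbations. Hence $\Phi_{\widehat{\mathcal{F}}}(s)>0$, i.e.\ $s<s_{\widehat{\mathcal{F}}}$.

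\textbf{Upper semicontinuity.} Fix $s>s_{\mathcal{F}}$, so $\Phi_{\mathcal{F}}(s)=-2c<0$. This direction is where the no-exact-overlap assumption is essential. The strategy is to establish a quasi-submultiplicative bound of the form $\Sigma_{m+n}^{\mathcal{F}}(s)\leq C\cdot\Sigma_m^{\mathcal{F}}(s)\cdot\Sigma_n^{\mathcal{F}}(s)$ with a constant $C>0$ uniform on a neighborhood of $\mathcal{F}$. For a strictly self-similar system such a bound holds trivially, but for a CPLIFS the folding of $f_{\mathbf{i}}$ at breakpoints could a priori compress $|I_{\mathbf{i}\mathbf{j}}|$ relative to $|I_{\mathbf{i}}|\cdot|I_{\mathbf{j}}|/|I|$. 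The no-exact-overlap condition on $\mathcal{S}_{\mathcal{F}}$ enters here: by compactness over the finitely many branch choices up to any fixed length, the distances $\mathrm{dist}(S_{\mathbf{i}},S_{\mathbf{j}})$ between distinct compositions in the generated self-similar IFS are bounded below uniformly on a neighborhood, and this translates (after decomposing each cylinder by its linearity branches) into a uniform lower bound on the ratios that control quasi-submultiplicativity. Once this bound is established, Fekete's lemma implies $\Phi_{\mathcal{F}}(s)=\lim_n\frac{1}{n}\log\Sigma_n^{\mathcal{F}}(s)$ with the convergence rate uniform on the neighborhood. Picking one large $N$ with $\frac{1}{N}\log\Sigma_N^{\mathcal{F}}(s)<-\tfrac{3c}{2}$ and invoking baseline continuity at this $N$ yields $\Phi_{\widehat{\mathcal{F}}}(s)<-c<0$, so $s_{\widehat{\mathcal{F}}}<s$.

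\textbf{Main obstacle.} The delicate step is the uniform quasi-submultiplicative bound underlying upper semicontinuity. Without the no-exact-overlap hypothesis, a coincidence $S_{\mathbf{i}}\equiv S_{\mathbf{j}}$ in the generated self-similar IFS could be approached by small perturbations producing deep cancellations and hence a discontinuous drop in $s_{\widehat{\mathcal{F}}}$. Translating the qualitative input $\mathrm{dist}(S_{\mathbf{i}},S_{\mathbf{j}})>0$ at every finite level into a quantitative, geometric bound on cylinder interactions that is valid uniformly on a neighborhood of $\mathcal{F}$ is where most of the work lies, and this is the technical core of the argument.
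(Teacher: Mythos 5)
Your proposed proof has a genuine gap in each direction, and the lower-semicontinuity gap can be seen already from the paper itself: your argument for $s<s_{\mathcal{F}}$ never invokes the no-exact-overlap hypothesis, yet hypothesis-free lower semicontinuity of the natural dimension is false. Example \ref{pm49} exhibits a CPLIFS $\mathcal{F}$ (all maps fixing $0$, so the generated self-similar system has exact overlaps) whose natural dimension exceeds that of every small perturbation $\widehat{\mathcal{F}}$ by a definite amount independent of $\varepsilon$. The step of yours that fails there is the selection of the good family $\mathcal{G}$: in that example every cylinder straddles the breaking point (the common fixed point), folding persists at all depths, and the excess pressure comes exactly from folded cylinders whose length $\tfrac{1}{2}\bigl((2/5)^k+(1/5)^k\bigr)(1/3)^{n-k}$ adds two one-sided branch contributions — no subfamily of unfolded words captures a positive fraction of $\Sigma_{n_0}^{\mathcal{F}}(s)$ at the correct exponential scale. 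Moreover, even when a nested image does lie in a single linearity piece of $f_{\mathbf{i}}$, the slope of that piece is one particular branch product and need not be comparable to $|I_{\mathbf{i}}|/|I|$ (folding makes $|I_{\mathbf{i}}|$ much smaller than the largest local slope times $|I|$), so your claim that ``lengths concatenate quasi-multiplicatively with a uniform constant'' does not hold with a constant uniform in the number of concatenated blocks. This slope-versus-length bookkeeping is precisely what the paper's Hofbauer--Raith machinery does: Lemma \ref{pm64} converts no exact overlaps into limit-irreducibility, Proposition \ref{md81} and Corollary \ref{pm71} identify $\Phi(s)$ with $\log\varrho(\mathbf{F}(s))$ and approximate it by a finite irreducible submatrix, and the structural stability of the initial part of the Markov diagram (Lemma \ref{pm85}) transports that submatrix into the perturbed diagram, giving Theorem \ref{pm80}.

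For the upper direction, your core claim — that no exact overlaps yields, ``by compactness over the finitely many branch choices up to any fixed length,'' separation bounds valid uniformly on a neighbourhood, hence a neighbourhood-uniform quasi-submultiplicative bound and Fekete — is not obtainable this way. No exact overlaps gives $\mathrm{dist}(S_{\mathbf{i}},S_{\mathbf{j}})>0$ at each level, and compactness upgrades this to a positive constant \emph{per level}, but that constant degrades with the level $n$ with no control; a lower bound of the form $c^{n}$ uniform in $n$ is exactly the ESC, which this theorem deliberately avoids assuming (cf.\ the Baker and B\'ar\'any--K\"aenm\"aki examples cited after Definition \ref{pm57}). Worse, the systems $\widehat{\mathcal{F}}$ in the conclusion may themselves have exact overlaps, so no separation statement uniform on a neighbourhood of $\mathcal{F}$ can hold at all, and the limsup in \eqref{cr64} cannot in general be replaced by a limit via submultiplicativity. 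The paper needs no such uniformity: Theorem \ref{pm79} proves, for any limit-irreducible system, $\widehat{\Phi}(s)<\max\{\Phi(s),\log\varrho(\mathbf{G}(s))\}+\varepsilon$ by splitting each path of the perturbed diagram at its last visit to $\mathcal{B}_0\cup\mathcal{B}_1$, shadowing the head in the unperturbed diagram and the tail in the orbit graph of critical points; the hypothesis is then used exactly once, for the unperturbed system only, in Lemma \ref{pm91}, where $\varrho(\mathbf{G}(s))>\varrho(\mathbf{F}(s))$ is shown to force a periodic orbit of a breaking-point image through two coinciding branches and hence an exact overlap $S_{\iiv (k,j_l)\iiv (k,j_r)}\equiv S_{\iiv (k,j_r)\iiv (k,j_l)}$. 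So the quantitative, neighbourhood-uniform separation you identify as ``the technical core'' is not merely unproven — it is the wrong invariant; what the hypothesis actually controls is the spectral radius of the critical-orbit graph, a qualitative property of $\mathcal{F}$ alone.
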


As an application of Theorem \ref{pm78}, in section \ref{md75} we prove that under mild conditions the Lebesgue measure of the attractor of a CPLIFS is typically positive if the natural dimension is strictly bigger than $1$.

\begin{theorem}\label{pm50}
  Let $\mathcal{F}$ be a CPLIFS with attractor $\Lambda$ and natural dimension $s$. If the functions of $\mathcal{F}$ only have positive slopes, then for Lebesgue-almost every translation parameters
  \[
    s>1 \implies \mathcal{L}(\Lambda)>0,
  \]
  where $\mathcal{L}$ denotes the appropriate dimensional Lebesgue measure.
\end{theorem}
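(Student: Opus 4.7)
The plan is to combine Theorem \ref{pm78} with a Peres--Solomyak style second-moment argument in the space of translation parameters. Write $\mathcal{F}_t$ for the CPLIFS obtained by replacing the translation parameters of $\mathcal{F}$ with $t$, and $\Lambda_t$, $I_{\mathbf{i}}(t)$ for its attractor and cylinder intervals. The set of $t$ at which the generated self-similar system has an exact overlap has Lebesgue measure zero (this is the fact used to deduce Theorem \ref{md10} from Theorem \ref{md46}), so by Theorem \ref{pm78} the map $t\mapsto s_{\mathcal{F}_t}$ is continuous at Lebesgue-a.e.\ $t$. Consequently $V:=\{t:s_{\mathcal{F}_t}>1\}$ agrees with an open set up to a null set, and it suffices to fix a compact box $B\subset V$ on which $s_{\mathcal{F}_t}\geq 1+2\eta$ for some $\eta>0$ and to show that $\mathcal{L}(\Lambda_t)>0$ for Lebesgue-a.e.\ $t\in B$.

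Because all slopes are positive, every $I_{\mathbf{i}}(t)$ is a nondegenerate interval. Setting $G_n(t):=\bigcup_{\mathbf{i}\in[m]^n}I_{\mathbf{i}}(t)$, the representation (\ref{cr15}) and the monotone convergence $G_n\downarrow \Lambda_t$ give $\mathcal{L}(\Lambda_t)=\lim_n\mathcal{L}(G_n(t))$, while Cauchy--Schwarz applied to the counting function $\sum_{\mathbf{i}}\mathbf{1}_{I_{\mathbf{i}}(t)}$ yields
\begin{equation*}
\mathcal{L}(G_n(t))\geq \frac{\bigl(\sum_{\mathbf{i}}|I_{\mathbf{i}}(t)|\bigr)^{2}}{\sum_{\mathbf{i},\mathbf{j}}\mathcal{L}\bigl(I_{\mathbf{i}}(t)\cap I_{\mathbf{j}}(t)\bigr)}.
\end{equation*}
On $B$ the numerator grows at least like $e^{\eta n}$ along a subsequence of $n$, since $\Phi_t(1)\geq\eta$ uniformly. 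For the denominator one proves an on-average transversality estimate
\begin{equation*}
\int_B \mathcal{L}\bigl(I_{\mathbf{i}}(t)\cap I_{\mathbf{j}}(t)\bigr)\,dt\leq C\,|I_{\mathbf{i}}|\cdot|I_{\mathbf{j}}|
\end{equation*}
for all distinct pairs $(\mathbf{i},\mathbf{j})\in[m]^n\times[m]^n$, with $C$ independent of $\mathbf{i},\mathbf{j},n$. Summing in $(\mathbf{i},\mathbf{j})$ and applying Cauchy--Schwarz in $t$ to the previous ratio gives $\int_B \mathcal{L}(G_n(t))\,dt\geq c>0$ uniformly in $n$ along the subsequence, whence monotone convergence yields $\int_B\mathcal{L}(\Lambda_t)\,dt>0$. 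Repeating the same argument on every sub-box $B'\subset B$ of positive measure (the constants depend only on $\eta$ and $B'$) produces $\mathcal{L}(\Lambda_t)>0$ for Lebesgue-a.e.\ $t\in B$ via the Lebesgue density theorem.

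The main obstacle is the on-average transversality estimate above. For a pure self-similar IFS on $\mathbb{R}$ it is a classical Solomyak-type bound, but here the cylinder $I_{\mathbf{i}}(t)$ is the image of $I$ under a composition of continuous piecewise linear maps, so its endpoints depend on $t$ through a combinatorial choice of linear branches at every level of iteration. The positive-slope hypothesis is what makes the estimate tractable: each $f_{\mathbf{i}}$ is monotone increasing, so $I_{\mathbf{i}}(t)$ remains an interval whose endpoints are piecewise affine in $t$ with uniformly bounded Jacobians, and the required estimate reduces to transversality for the generated self-similar IFS $\mathcal{S}_{\mathcal{F}_t}$. The no-exact-overlap condition (holding a.e.\ on $B$) combined with Theorem \ref{pm78}, which guarantees that the uniform lower bound $\Phi_t(1)\geq\eta$ persists under the small perturbations needed to run the transversality argument, then finishes the proof.
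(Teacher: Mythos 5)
Your first half is sound and is essentially what the paper does: the paper likewise discards the null set of translation parameters where the generated self-similar system fails the ESC (Hochman), and then uses semicontinuity of the pressure — specifically the lower bound of Theorem \ref{pm80} with $\varepsilon=\Phi(1)$ — to obtain an open ball of parameters on which $s>1$ persists, cf.\ \eqref{pm68}; your reduction to a compact box with $s\geq 1+2\eta$ is the same move. The divergence, and the genuine gap, is in the second half: the paper proves no transversality estimate at all, but instead quotes \cite[Theorem~1]{keane2003dimension} (the known a.e.-translation result for self-similar systems on the line: dimension $>1$ implies positive Lebesgue measure for a.e.\ translation vector) and transfers it to the CPLIFS parameters $(\mathfrak{b},\pmb{\tau})$ via \cite[Fact~4.1]{prokaj2021piecewise}. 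Your attempt to reprove that input from scratch contains a concrete error: the estimate $\int_B \mathcal{L}\bigl(I_{\mathbf{i}}(t)\cap I_{\mathbf{j}}(t)\bigr)\,dt\leq C\,|I_{\mathbf{i}}|\,|I_{\mathbf{j}}|$ with $C$ uniform over \emph{all} distinct pairs is false. If $\mathbf{i}$ and $\mathbf{j}$ share a long common prefix $\mathbf{k}$, then (all slopes positive) $\mathcal{L}(I_{\mathbf{i}}\cap I_{\mathbf{j}})$ is $\rho_{\mathbf{k}}$ times the overlap of the two tail cylinders, and in the regime $s>1$ the tails can overlap robustly for every $t\in B$ (e.g.\ two branches of slope $3/4$); then the left-hand side is of order $\rho_{\mathbf{k}}\asymp|I_{\mathbf{i}}|$ while the right-hand side is of order $|I_{\mathbf{i}}|^{2}$. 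The correct Solomyak-type bound must factor out the common prefix and apply transversality only at the first differing symbol, and the summability of the resulting series over prefixes is precisely where $\Phi_t(1)\geq\eta$ enters; none of this is carried out in your sketch.

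Two further steps do not close as written. First, $\Phi$ is defined by a $\limsup$, so your numerator bound $e^{\eta n}$ holds only along a $t$-dependent subsequence, while the denominator is controlled only on average over $t$ at a fixed $n$; ``Cauchy--Schwarz in $t$'' cannot combine the two, since no single subsequence is good for a.e.\ $t$. (The standard escape is the density-of-measures formulation with projections $\mu_t$ of a fixed symbolic measure satisfying $\mu[\mathbf{i}]\approx|I_{\mathbf{i}}|$; for a CPLIFS such a measure must come from the Markov diagram or a finite graph-directed subsystem, which is again why the paper routes through \cite{keane2003dimension} rather than raw cylinder counting.) Second, the concluding Lebesgue-density step is a non sequitur: $\int_{B'}\mathcal{L}(\Lambda_t)\,dt\geq c(B')>0$ for every sub-box yields a.e.\ positivity only if $c(B')\gtrsim\mathcal{L}(B')$ uniformly, whereas your constants degenerate like $\mathcal{L}(B')^{2}$ as $B'$ shrinks (the transversality constant does not scale with the box), so at a density point of the putative bad set $\{t:\mathcal{L}(\Lambda_t)=0\}$ no contradiction arises. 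Finally, the asserted reduction of CPLIFS-cylinder transversality to the generated self-similar system is exactly the hard content, not a remark: the branch combinatorics of $f_{\mathbf{i}}$ on $I$ change with $t$ and proliferate with $n$, so ``piecewise affine endpoints with bounded Jacobians'' does not provide the uniform-in-$n$ lower bound on the relative speed of cylinder pairs that transversality requires. These are the points the paper deliberately outsources to \cite[Theorem~1]{keane2003dimension} together with \cite[Fact~4.1]{prokaj2021piecewise}.
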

We state this result rigorously in Theorem \ref{pm77}.

\section{Preliminaries}\label{md22}

\subsection{Parameters of a CPLIFS}\label{pm75}

We fix a number $m\geq 2$, and use it as the number of functions in a CPLIFS throughout the paper. 
Let $\mathcal{F}=\left\{f_k\right\}_{k=1}^{m }$ be a CPLIFS. The functions $f_k:\mathbb{R}\to \mathbb{R}$ are always defined on the whole real line for every $k\in[m]$.
We write $l(k)$ for the number of breaking points  of $f_k$
for $k\in[m]$,
and we say that the \textbf{type} of the CPLIFS is the vector
  \begin{equation}\label{cv44}
  \pmb{\ell }=(l(1), \dots ,l(m)).  
\end{equation}
For example the type of the CPLIFS on Figure \ref{cv47} is
$\pmb{\ell }=(1,2)$. If $\mathcal{F}$ is a CPLIFS of type $\pmb{\ell }$,  then we write $\mathcal{F}\in\mathrm{CPLIFS}_{\pmb{\ell }}$. 

The breaking points of $f_k$ are denoted by $b_{k,1} < \cdots < b_{k,l(k)}$.
Let $L:=\sum_{k=1}^{m}l(k)$ be the total number of breaking points of the functions of $\mathcal{F}$ with multiplicity if some of the breaking points of two different elements of $\mathcal{F} $ coincide.
We arrange all the breaking points in an $L$ dimensional vector $\mathfrak{b}\in \mathbb{R}^L$ as
\begin{equation}\label{cv58}
  \mathfrak{b}
  = (b_{1,1}, \dots ,b_{1,l(1)}, b_{2,1}, \dots ,b_{2,l(2)},
  \dots, b_{m,1}, \dots ,b_{m,l(m)}).
\end{equation}
The set of breaking points vectors $\mathfrak{b}$ for a type $\pmb{\ell }$ CPLIFS is
\begin{equation}\label{cv59}
  \mathfrak{B}^{\pmb{\ell }}:=
  \left\{\mathfrak{b}=(b_{1,1},\dots,b_{m,l(m)})\in\mathbb{R}^{L}
  \bigg\vert\: \forall k\in[m], \forall i\in[l(k)-1]: b_{k,i}<b_{k,i+1} \right\}.
\end{equation}

The $l(k)$ breaking points of the piecewise linear continuous function $f_k$ determines the $l(k)+1$  \textbf{intervals of linearity} $J_{k,i}^{\mathfrak{b}}$, among which the first and the last are actually half lines:
\begin{equation}\label{cv57}
 J_{k,i}:=  J_{k,i}^{\mathfrak{b}}:=
  \left\{
    \begin{array}{ll}
      (-\infty ,b_{k,1}), & \hbox{if $i=1$;} \\
      (b_{k,i-1},b_{k,i}), & \hbox{if $2\leq i\leq l(k)$;} \\
      (b_{k,l(k)},\infty ), & \hbox{if $i=l(k)+1$.}
    \end{array}
  \right.
\end{equation}
The derivative of $f_k$ exists on $J_{k,i}$ and is equal to the constant
\begin{equation}\label{cs68}
  \rho_{k,i}:= f'_k|_{J_{k,i}}.
\end{equation}

We arrange the contraction ratios $\rho_{k,i}\in (-1,1)\setminus \{ 0\}$ into a vector $\pmb{\rho}$, analogously to the breaking points.
\begin{equation}\label{cv56}
  \pmb{\rho}:=\pmb{\rho}_{\mathcal{F}}:= (\rho_{1,1}, \dots ,\rho_{1,l(1)+1},
  \dots,\rho_{m,1}, \dots ,\rho_{m,l(m)+1})\in 
  \left( (-1,1)\setminus \{ 0\}\right)^{L+m}.
\end{equation}
We call $\pmb{\rho}$ the \textbf{vector of contractions}.
The set of all possible values of $\pmb{\rho}$ for an $\mathcal{F}\in \mathrm{CPLIFS}_{\pmb{\ell}}$
is
\begin{equation}\label{cv50}
  \mathfrak{R}^{\pmb{\ell }}:
  =
  \left\{\pmb{\rho}\in\left((-1,1)\setminus \{ 0\}\right)^{L+m} \bigg\vert\:
  \forall k\in[m],\forall i\in[l(k)]: \rho_{k,i}\ne \rho_{k,i+1}
  \right\}.
\end{equation}

Finally, we write
\begin{equation}\label{cv55}
  \tau_k:=f_k(0), \mbox{ and }
  \pmb{\tau}:=(\tau_1, \dots ,\tau_m)\in\mathbb{R}^{m}.
\end{equation}

\begin{figure}[t]
\centering
\includegraphics[width=8cm]{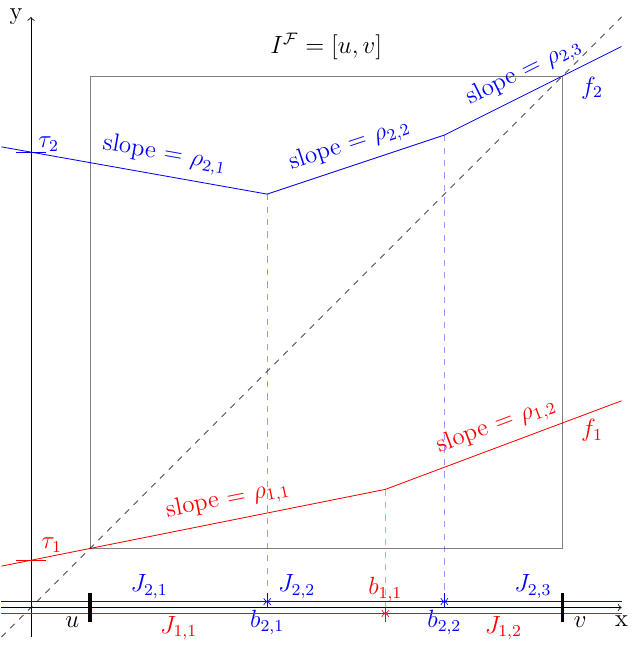}
\caption{A general CPLIFS with the related notations.}\label{cv47}
\end{figure}

So, the parameters that uniquely determine an $\mathcal{F}\in\mathrm{CPLIFS}_{\pmb{\ell}}$ can be organized into a vector
\begin{equation}\label{cv53}
  (\mathfrak{b},\pmb{\tau},\pmb{\rho})\in
  \pmb{\Gamma}^{\pmb{\ell }}:=\mathfrak{B}^{\pmb{\ell}}\times\mathbb{R}^m\times\mathfrak{R}^{\pmb{\ell}}
  \subset \mathbb{R}^L\times\mathbb{R}^m\times\mathbb{R}^{L+m}=
  \mathbb{R}^{2L+2m}.
\end{equation}

For a $(\mathfrak{b},\pmb{\tau},\pmb{\rho})\in \pmb{\Gamma}^{\pmb{\ell}}$ we write $\mathcal{F}^{(\mathfrak{b},\pmb{\tau},\pmb{\rho})}$ for the corresponding CPLIFS, $\Lambda^{(\mathfrak{b},\pmb{\tau},\pmb{\rho})}$ for its attractor, and $s_{(\mathfrak{b},\pmb{\tau},\pmb{\rho})}$ for its natural dimension.

\subsection{Markov diagrams}\label{md21}

Let $\mathcal{F}=\{f_k\}_{k=1}^m$ be a CPLIFS, and let $I$ be the interval defined by \eqref{cr22}. Writing $I_k:=f_k(I)$ and $\mathcal{I}=\cup_{k=1}^m I_k$, we define the \textbf{expanding multi-valued mapping associated to} $\mathcal{F}$ as 
\begin{equation}\label{md59}
    T:\mathcal{I}\mapsto \mathcal{P}(\mathcal{P}(I)),\quad 
    T(y):= \big\{\{x\in I: f_k(x)=y\}\big\}_{k=1}^m .
\end{equation}
That is the image of any Borel subset $A\subset\mathcal{I}$ is 
\[
    T(A)= \big\{\{x\in I: f_k(x)\in A\}\big\}_{k=1}^m.
\]
For $k\in[m], j\in[l(k)+1]$, we define $f_{k,j}:J_{k,j}\to I_k$ as the unique linear function that satisfies $\forall x\in J_{k,j}: f_k(x)=f_{k,j}(x)$.
We call the expansive linear functions 
\begin{align}\label{md58}
    \forall k\in[m], \forall j\in &[l(k)+1]:\quad f_{k,j}^{-1}: f_k(J_{k,j}) \to J_{k,j}, \\
    &\forall x\in J_{k,j}: f_{k,j}^{-1}(f_k(x))=x \nonumber
\end{align}
the \textbf{branches} of the multi-valued mapping $T$. As the notation suggests, these are the local inverses of the elements of $\mathcal{F}$.

\begin{figure}[t]
  \centering
  \includegraphics[width=12cm]{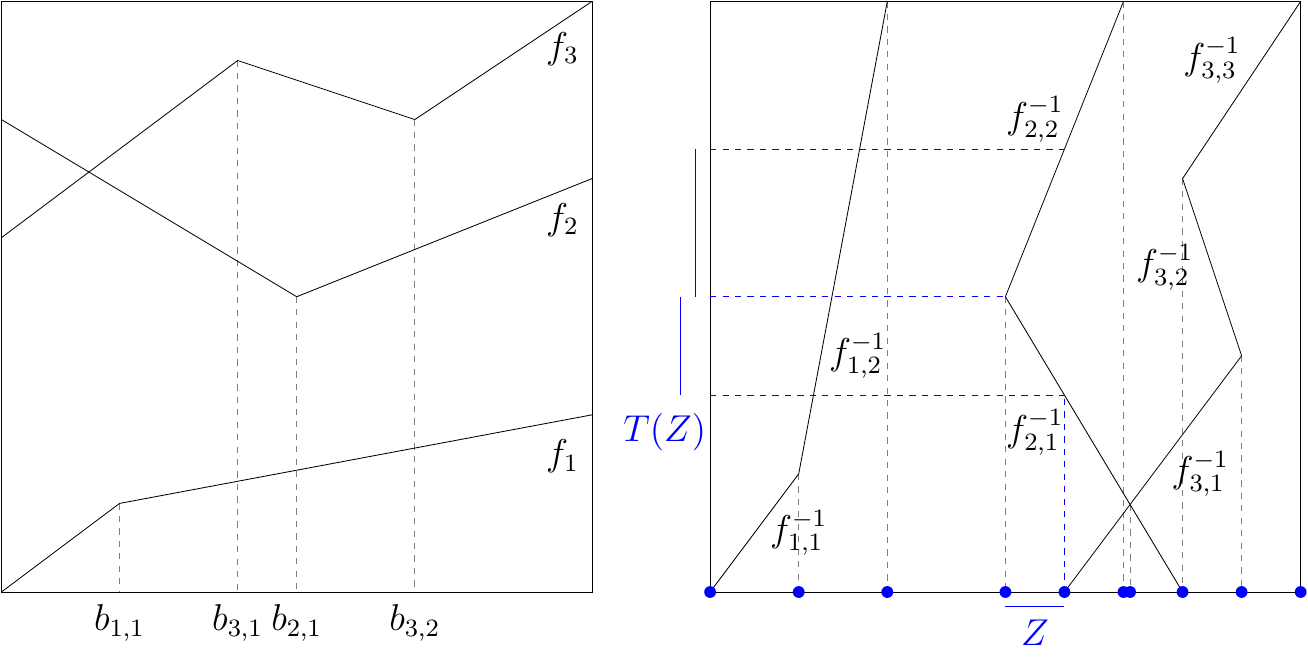}
  \caption{A CPLIFS $\mathcal{F}=\{f_k\}_{k=1}^m$ is on the left with its associated expansive multi-valued mapping $T$ on the right. The critical points are colored with blue.}\label{md25}
\end{figure}

\begin{definition}\label{md57}
    We define the \textbf{set of critical points} as 
      \begin{gather*}
        \mathcal{K}:=\cup_{k=1}^m \{ f_k(0),f_k(1)\} \bigcup 
        \cup_{k=1}^m \cup_{j=1}^{l(k)} f_k(b_{k,j}) \bigcup \\
        \left\{ x\in \mathcal{I} \big\vert \exists k_1,k_2\in [m], 
        \exists j_1\in[l(k_1)], \exists j_2\in[l(k_2)]:  
        f_{k_1, j_1}^{-1}(x)=f_{k_2, j_2}^{-1}(x) \right\}.
      \end{gather*}
\end{definition}

\begin{definition}\label{md56}
    We call the partition of $\mathcal{I}$ into closed intervals defined by the set of critical points $\mathcal{K}$ the \textbf{monotonicity partition} $\mathcal{Z}_0$ of $\mathcal{F}$. We call its elements \textbf{monotonicity intervals}.
\end{definition}

\begin{definition}\label{md55}
    Let $C\subset Z\in \mathcal{Z}_0$ be a closed interval. We say that $D$ is a \textbf{successor} of $C$ and we write $C\to D$ if
    \begin{equation}\label{md54}
      \exists Z_0\in \mathcal{Z}_0, C^{\prime}\in T(C): D=Z_0\cap C^{\prime}.
    \end{equation}
    Further, we write $C\to_{k,j} D$ if 
    \[
      \exists Z_0\in \mathcal{Z}_0: D=Z_0\cap f_{k,j}^{-1}(C).  
    \]
    The set of successors of $C$ is denoted by $w(C):=\{D\vert C\to D\}$.
\end{definition}

Similarly, we define $w(\mathcal{Z}_0)$ as the set of the successors of all elements of $\mathcal{Z}_0$. That is 
\begin{equation}\label{md96}
  w(\mathcal{Z}_0):=\cup_{Z\in \mathcal{Z}_0} w(Z).
\end{equation}

\begin{definition}\label{md95}
  We say that $(\mathcal{D},\to)$ is the \textbf{Markov Diagram of} $\mathcal{F}$ \textbf{ with respect to} $\mathcal{Z}_0$ if $\mathcal{D}$ is the smallest set containing $\mathcal{Z}_0$ such that $\mathcal{D}=w(\mathcal{D})$. For short, we often call it the Markov diagram of $\mathcal{F}$. 
\end{definition}

\begin{remark}\label{md36}
    We can similarly define the Markov diagram of $\mathcal{F}$ with respect to any finite partition $\mathcal{Z}_0^{\prime}$ of $\mathcal{I}$. 
\end{remark}

We often use the notation 

\begin{equation}\label{md60}
  \mathcal{D}_n:=\cup_{i=0}^n w^i(\mathcal{Z}_0) \mbox{, where }
  w^i(\mathcal{Z}_0)=\underbrace{w\circ\cdots\circ w}_{i \mbox{ times}} (\mathcal{Z}_0).
\end{equation}
Obviously, 
\begin{equation}\label{md93}
  \mathcal{D}=\cup_{i\geq 0} w^i(\mathcal{Z}_0).
\end{equation}
If the union in \eqref{md93} is finite, we say that \textbf{the Markov diagram is finite}. We define recursively the $\pmb{n}$\textbf{-th level of the Markov diagram} as
\begin{equation}\label{md53}
   \mathcal{Z}_n:=\omega(\mathcal{Z}_{n-1})\setminus \cup_{i=0}^{n-1} \mathcal{Z}_{i},
\end{equation}
for $n\geq 1$.

One can imagine the Markov diagram as a (potentially infinitely big) directed graph, with vertex set $\mathcal{D}$. Between $C,D\in \mathcal{D}$, we have a directed edge $C\to D$ if and only if $D\in w(C)$. 
A subset $\mathcal{C}\subset\mathcal{D}$ is called \textbf{closed}, if
$C\in\mathcal{C}$, $D\in\mathcal{D}$ and $C\to D$ in
$(\mathcal{D},\to )$ implies $D\in\mathcal{C}$.
We call the Markov diagram \textbf{irreducible} if there exists a directed path between any two intervals $C,D\in \mathcal{D}$. According to the next lemma, we can always assume without loss of generality that the Markov diagram is irreducible. 

\begin{lemma}[{\cite[Lemma~2.6]{prokaj2022fractal}}]\label{md91}
  Let $\mathcal{F}$ be a CPLIFS with attractor $\Lambda$, and let $(\mathcal{D}(\mathcal{Y}_0),\to)$ be its Markov diagram with respect to some finite refinement $\mathcal{Y}_0$ of the monotonicity partition $\mathcal{Z}_0$. For the right choice of $\mathcal{Y}_0$, 
  there exists an irreducible subdiagram $(\mathcal{D}^{'},\to)$ of $(\mathcal{D}(\mathcal{Y}_0),\to)$, such that the elements of $\mathcal{D}^{\prime}$ cover $\Lambda$.
\end{lemma}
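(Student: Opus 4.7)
The plan is to refine $\mathcal{Z}_0$ just enough that the diagram elements which meet $\Lambda$ form a single strongly connected subdiagram, and to take $\mathcal{D}'$ to be that subdiagram. The driving input is topological exactness of $\mathcal{F}$ on $\Lambda$: for any $x \in \Lambda$ and any open neighborhood $U \ni x$, the uniform contractivity of the $f_k$ yields a word $\mathbf{i} \in [m]^{*}$ with $f_{\mathbf{i}}(I) \subset U$. Fix $n_0$ large enough that every length-$n_0$ cylinder $f_{\mathbf{i}}(I)$ has diameter less than $\min_{Z \in \mathcal{Z}_0} |Z|$, and let $\mathcal{Y}_0$ be the common refinement of $\mathcal{Z}_0$ with the partition of $I$ induced by the endpoints of all length-$n_0$ cylinders. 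This is a finite refinement of $\mathcal{Z}_0$.

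Set $\mathcal{D}' := \mathcal{D}_\Lambda := \{ D \in \mathcal{D}(\mathcal{Y}_0) : D \cap \Lambda \neq \emptyset \}$. Since $\mathcal{Y}_0$ covers $\mathcal{I} \supset \Lambda$, the collection $\mathcal{D}_\Lambda$ covers $\Lambda$. It is closed under predecessors: if $C \to D$ in $\mathcal{D}(\mathcal{Y}_0)$ and $D \in \mathcal{D}_\Lambda$, then any $y \in D \cap \Lambda$ has $f_k(y) \in C \cap \Lambda$ because $f_k(\Lambda) \subset \Lambda$, so $C \in \mathcal{D}_\Lambda$. Hence $\mathcal{D}'$ is a genuine subdiagram covering $\Lambda$.

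It remains to establish irreducibility. Given $C, D \in \mathcal{D}_\Lambda$ with $x \in C \cap \Lambda$ and $y \in D \cap \Lambda$, fix an address $\sigma \in [m]^{\mathbb{N}}$ of $x$ and, by exactness, choose $n \geq n_0$ so large that $f_{\sigma_1 \cdots \sigma_n}(I) \subset C$. The inclusion $f_{\sigma_1 \cdots \sigma_n}(D) \subset C$ should then translate into an $n$-step path $C \to C_1 \to \cdots \to C_n$ in $\mathcal{D}(\mathcal{Y}_0)$ with $y \in C_n \subset D$. Applying the same argument with the roles of $C$ and $D$ swapped produces a path from $D$ back to $C$, giving the irreducibility of $\mathcal{D}'$.

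The main obstacle is the branch bookkeeping. A single arrow in the diagram corresponds to one inverse branch $f_{k,j}^{-1}$, not to an arbitrary piecewise-linear inverse, so the desired path exists only if at each step the intermediate image of $D$ lies inside a single monotonicity piece of $\mathcal{Z}_0$. This is exactly what the refinement of Step 1 is designed to handle: inside each length-$n_0$ cylinder, the composition $f_{\sigma_1 \cdots \sigma_n}$ is a single linear map, and since $\mathcal{Y}_0$ is cut at the endpoints of all length-$n_0$ cylinders, one can inductively choose branches $j_i$ and monotonicity pieces $Z^{(i)} \in \mathcal{Z}_0$ along $\sigma$ such that each intermediate image of $D$ lies in a single $Z^{(i)}$. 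This combinatorial verification is the only non-trivial part of the argument, and once carried through it produces the required path, completing the proof.
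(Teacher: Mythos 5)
The paper itself gives no proof of this lemma (it is quoted from \cite[Lemma~2.6]{prokaj2022fractal}), so your attempt can only be judged on its own merits, and it has a genuine gap at the irreducibility step. First, you are aiming at the wrong target vertex. Irreducibility of $(\mathcal{D}',\to)$ means a directed path between the \emph{vertices} $C$ and $D$ themselves. Granting your lift of the inclusion $f_{\sigma_1\cdots\sigma_n}(D)\subset C$ to a path, the vertices it produces are of the form $C_q=Z^{(q)}\cap f_{\sigma_q,j_q}^{-1}(C_{q-1})$, and by induction these merely \emph{contain} the intermediate images $f_{\sigma_{q+1}\cdots\sigma_n}(D)$; the path therefore ends at some $C_n\supseteq D$ (not, as you write, at a $C_n\subseteq D$, and in neither case at $D$ itself), because $C_n$ carries its own defining history which need not match that of $D$. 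To reach $D$ exactly one must first arrive at the precise level-$0$ ancestor $Y\in\mathcal{Y}_0$ of $D$ and then descend along $D$'s generating path $Y\to\cdots\to D$ (this path does stay inside $\mathcal{D}'$, by the predecessor-closure observation you correctly made). But arriving \emph{exactly} at a full partition element $Y$ requires an extra mechanism: one needs a predecessor vertex containing the whole image $f_{k}(Y)$ of a full partition element, and the standard way to get this is the Hofbauer-type expansion argument --- successor intervals are expanded by the branches of $T$ until they swallow entire partition elements. No such step appears in your proposal.

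Second, the ``combinatorial verification'' you defer at the end is not bookkeeping; it is the actual content, and your choice of refinement does not deliver it. The lift requires each intermediate image $f_{\sigma_{q+1}\cdots\sigma_n}(D)$ to lie in a single element of $\mathcal{Y}_0$ \emph{and} in the closure of a single branch domain $J_{\sigma_q,j}$. Cutting $I$ at the endpoints of all length-$n_0$ cylinders does not control this: for $q$ close to $n$ the intermediate image only sits inside a low-level cylinder, which is large, and it can perfectly well straddle a breaking point or a partition endpoint. Moreover, the claim that ``inside each length-$n_0$ cylinder, the composition $f_{\sigma_1\cdots\sigma_n}$ is a single linear map'' confuses ranges with domains: cylinders $f_{\mathbf{i}}(I)$ are \emph{images}, whereas the linearity pieces of $f_{\sigma_1\cdots\sigma_n}$ are cut out by preimages of breaking points under the partial compositions, and these have nothing to do with cylinder endpoints. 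A smaller but real defect of the same kind: if $C\cap\Lambda$ consists only of an endpoint of $C$, the cylinders $f_{\sigma_1\cdots\sigma_n}(I)$ around that point need not be contained in $C$, so even the first inclusion $f_{\sigma_1\cdots\sigma_n}(I)\subset C$ can fail for every address $\sigma$. Your overall architecture (refine $\mathcal{Z}_0$, take the elements meeting $\Lambda$, prove closure under predecessors, connect via words of the IFS) is the right one, but as written the proof establishes covering and predecessor-closure only, not irreducibility.
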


\subsection{Connection to the natural pressure}\label{md19}

Similarly to graph-directed iterated function systems, we associate a matrix to Markov diagrams, that will help us determine the Hausdorff dimension of the corresponding CPLIFS (see \cite{falconer1997techniques}).
\begin{definition}\label{md90}
  Let $\mathcal{F}=\{f_k\}_{k=1}^m$ be a CPLIFS, and write $(\mathcal{D},\to )$ for its Markov diagram. We define the matrix $\mathbf{F}(s):=\mathbf{F}_{\mathcal{D}}(s)$ indexed by the elements of $\mathcal{D}$ as
  \begin{equation}\label{md89}
    \left[ \mathbf{F}(s)\right]_{C,D} := \begin{cases}
      \sum_{(k,j): C\to_{(k,j)} D} \vert f_{k,j}^{'}\big\vert^s \mbox{, if $C\to D$} \\
      0 \mbox{, otherwise.}
    \end{cases}  
  \end{equation}
  We call $\mathbf{F}_{\mathcal{D}}(s)$ the \textbf{matrix associated to the Markov diagram} $(\mathcal{D},\to)$.
\end{definition}

We used in the definition, that for a $D\in \mathcal{D}$ with $C\to_{(k,j)} D$ the derivative of $f_{k,j}$ over $D$ is a constant number. That is each element of $\mathbf{F}(s)$ is either zero or a sum of the $s$-th power of some contraction ratios.

This matrix can be defined for any $\mathcal{C}\subset \mathcal{D}$ as well, by choosing the indices from $\mathcal{C}$ only. We write $\mathbf{F}_{\mathcal{C}}(s)$ for such a matrix. It follows that $\mathbf{F}_{\mathcal{C}}(s)$ is always a submatrix of $\mathbf{F}(s)$ for $\mathcal{C}\subset \mathcal{D}$. 

We write $\mathcal{E}_{\mathcal{C}}(n)$ for the set of $n$-length directed paths in the graph $(\mathcal{C},\to)$.
\begin{equation}\label{md87}
  \begin{aligned}
    \mathcal{E}_{\mathcal{C}}(n):= &\{ ((k_1,j_1),\dots ,(k_n,j_n)) \big\vert 
      \exists C_1,\dots ,C_{n+1}\in \mathcal{C}: \\
      &\forall q\in [n] \exists k_q\in[m], j_q\in[l(k)]) : 
      C_{q}\to_{(k_q,j_q)} C_{q+1} \}.
  \end{aligned}
\end{equation}

An $n$-length directed path here means $n$ many consecutive directed edges, and we identify each such path with the labels of the included edges in order.
Each path in $(\mathcal{D},\to)$ of infinite length represents a point in $\Lambda$, and each point is represented by at least one path. 
Similarly, for $\mathcal{C}\subset \mathcal{D}$ the points defined by the natural projection of infinite paths in $(\mathcal{C},\to)$ form an invariant set $\Lambda_{\mathcal{C}}\subset \Lambda$.
We define the natural pressure of these sets as
\begin{equation}\label{md88}
  \Phi_{\mathcal{C}}(s):= \limsup_{n\to \infty}\frac{1}{n}\log
  \sum_{\mathbf{k}} \vert I_{\mathbf{k}}\vert^s,
\end{equation} 
where the sum is taken over all $\mathbf{k}=(k_1,\dots k_n)$ for which $\exists j_1,\dots j_n: ((k_1,j_1),$ $\dots ,(k_n,j_n))\in \mathcal{E}_{\mathcal{C}}(n)$, and
$I$ is the interval defined in \eqref{cr22}.
By the definition of $\mathcal{D}$ it is easy to see that $\Phi_{\mathcal{D}}(s)=\Phi (s)$. 

\begin{remark}\label{md35}
    Let $\mathcal{Y}_0$ be a finite refinement of the monotonicity partition $\mathcal{Z}_0$, and let $(\mathcal{D}^{\prime},\to)$ be the Markov diagram of $\mathcal{F}$ with respect to $\mathcal{Y}_0$. Obviously, 
    \begin{equation}
        \forall s\geq 0: \Phi_{\mathcal{D}}(s)=\Phi_{\mathcal{D}^{\prime}}(s).
    \end{equation}
\end{remark}

We will show, that the unique zero of the function $\Phi_{\mathcal{D}}(s)$ can be approximated by the root of $\Phi_{\mathcal{C}}(s)$ for some $\mathcal{C}\subset \mathcal{D}$. To show this, we need to connect the function $\Phi_{\mathcal{C}}(s)$ to the matrix $\mathbf{F}_{\mathcal{C}}(s)$.

As an operator, $(\mathbf{F}_{\mathcal{D}}(s))^n$ is always bounded in the $l^{\infty}$-norm. Thus we can define 
\[
\varrho (\mathbf{F}_{\mathcal{C}}(s)) := \lim_{n\to \infty} \lVert (\mathbf{F}_{\mathcal{C}}(s))^n\rVert_{\infty} ^{1/n}. 
\] 

\begin{lemma}[{\cite[Lemma~2.9]{prokaj2022fractal}}]\label{md84}
  Let $\mathcal{C}\subset \mathcal{D}$. If $(\mathcal{C},\to)$ is irreducible, then 
  \begin{equation}\label{md86}
    \Phi_{\mathcal{C}}(s)\leq \log \varrho (\mathbf{F}_{\mathcal{C}}(s)).
  \end{equation}
  If $(\mathcal{C},\to)$ is irreducible and finite, then
  \begin{equation}\label{md85}
    \Phi_{\mathcal{C}}(s)= \log \varrho (\mathbf{F}_{\mathcal{C}}(s)).
  \end{equation}
\end{lemma}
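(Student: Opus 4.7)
My plan is to translate the cylinder sum defining $\Phi_{\mathcal{C}}(s)$ into a sum over directed paths in $(\mathcal{C},\to)$, identify that path sum with entries of $\mathbf{F}_{\mathcal{C}}(s)^n$, and then extract the exponential rate via $\varrho(\mathbf{F}_{\mathcal{C}}(s))$.

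\emph{Upper bound.} For an admissible word $\mathbf{k}=(k_1,\dots,k_n)$, the composition $f_{\mathbf{k}}=f_{k_1}\circ\cdots\circ f_{k_n}$ is continuous and piecewise linear on $I$, with derivative $\prod_q\rho_{k_q,j_q}$ on each linearity piece indexed by a monotonicity pattern $\mathbf{j}=(j_1,\dots,j_n)$. Since $f_{\mathbf{k}}(I)=I_{\mathbf{k}}$ is an interval, the mean value theorem applied piecewise gives
\[
|I_{\mathbf{k}}|\leq |I|\cdot\max_{\mathbf{j}}\prod_{q=1}^n|\rho_{k_q,j_q}|,
\]
where the maximum runs over patterns $\mathbf{j}$ such that $(\mathbf{k},\mathbf{j})\in\mathcal{E}_{\mathcal{C}}(n)$. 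Using $(\max_{\alpha}x_{\alpha})^s\leq\sum_{\alpha}x_{\alpha}^s$ for $s>0$ to replace the maximum by a sum, summing over $\mathbf{k}$, and recognising the right-hand side as a double sum of matrix entries,
\[
\sum_{\mathbf{k}}|I_{\mathbf{k}}|^s\leq |I|^s\sum_{(\mathbf{k},\mathbf{j})\in\mathcal{E}_{\mathcal{C}}(n)}\prod_{q=1}^n|\rho_{k_q,j_q}|^s=|I|^s\sum_{C_0,C_n\in\mathcal{C}}\bigl[\mathbf{F}_{\mathcal{C}}(s)^n\bigr]_{C_0,C_n}.
\]
In the irreducible case this total grows at the same exponential rate as $\|\mathbf{F}_{\mathcal{C}}(s)^n\|_\infty$ (trivially via the factor $|\mathcal{C}|^2$ when $\mathcal{C}$ is finite; when $\mathcal{C}$ is infinite irreducible, any two row sums are comparable up to a sub-exponential prefactor obtained by inserting bounded-length detours between their indices). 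Taking $\tfrac{1}{n}\log$ and $\limsup$ yields $\Phi_{\mathcal{C}}(s)\leq\log\varrho(\mathbf{F}_{\mathcal{C}}(s))$.

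\emph{Equality when $\mathcal{C}$ is finite and irreducible.} For the reverse inequality I would invoke Perron--Frobenius on the non-negative irreducible matrix $\mathbf{F}_{\mathcal{C}}(s)$: $\varrho(\mathbf{F}_{\mathcal{C}}(s))$ is a simple eigenvalue with strictly positive left and right eigenvectors, hence every row sum of $\mathbf{F}_{\mathcal{C}}(s)^n$ and in particular the total sum grows as $\Theta(\varrho(\mathbf{F}_{\mathcal{C}}(s))^n)$. Geometrically, unwinding a path $C_0\to_{(k_1,j_1)}C_1\to\cdots\to_{(k_n,j_n)}C_n$ shows that $f_{k_1,j_1}\circ\cdots\circ f_{k_n,j_n}(C_n)\subset I_{\mathbf{k}}$ is a sub-interval of length $|C_n|\prod_q|\rho_{k_q,j_q}|\geq \delta\prod_q|\rho_{k_q,j_q}|$, where $\delta:=\min_{C\in\mathcal{C}}|C|>0$ since $\mathcal{C}$ is finite; hence $|I_{\mathbf{k}}|^s\geq\delta^s\max_{\mathbf{j}}\prod_q|\rho_{k_q,j_q}|^s$.

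\emph{Main obstacle.} The delicate point is that $\Phi_{\mathcal{C}}(s)$ sums over \emph{words} $\mathbf{k}$ while the matrix power sum counts \emph{paths} $(\mathbf{k},\mathbf{j})$, and a single $\mathbf{k}$ can admit many patterns $\mathbf{j}$, so collapsing the path sum to a word sum by way of $\max_{\mathbf{j}}$ instead of $\sum_{\mathbf{j}}$ may \emph{a priori} lose an exponential factor. To close the lower bound I would fix a starting vertex $C_0$, exploit the finiteness of $\mathcal{C}$ to bound uniformly the out-degree of the graph, and compare $\max_{\mathbf{j}}\prod_q|\rho_{k_q,j_q}|^s$ with $\sum_{\mathbf{j}}\prod_q|\rho_{k_q,j_q}|^s$ through a Perron--Frobenius estimate applied to a label-forgetting refinement of $\mathbf{F}_{\mathcal{C}}(s)$. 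Verifying that both quantities do in fact share the same exponential rate $\log\varrho(\mathbf{F}_{\mathcal{C}}(s))$ is the technical heart of the argument; once this is secured, combining with the geometric lower bound above yields $\Phi_{\mathcal{C}}(s)\geq\log\varrho(\mathbf{F}_{\mathcal{C}}(s))$ and hence equality.
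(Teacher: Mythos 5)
The paper never proves this lemma itself (it is imported verbatim from \cite[Lemma~2.9]{prokaj2022fractal}), so your attempt has to stand on its own, and as written it has two genuine gaps. In the upper bound you estimate $|I_{\mathbf{k}}|\leq |I|\max_{\mathbf{j}}\prod_{q}|\rho_{k_q,j_q}|$ \emph{with the maximum restricted to patterns $\mathbf{j}$ such that $(\mathbf{k},\mathbf{j})\in\mathcal{E}_{\mathcal{C}}(n)$}. That restriction is unjustified: $I_{\mathbf{k}}=f_{\mathbf{k}}(I)$ is the image of all of $I$, and the Lipschitz constant of $f_{\mathbf{k}}$ on $I$ is the maximum of the slope products over \emph{all} branch patterns realized on $I$; those correspond to paths in the full diagram $(\mathcal{D},\to)$, not necessarily to paths inside the subset $\mathcal{C}$. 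A word $\mathbf{k}$ that is admissible in $\mathcal{C}$ through a small-slope pattern may also be realized on $I$ through a larger-slope pattern avoiding $\mathcal{C}$, so your inequality, repaired honestly, only yields $\Phi_{\mathcal{C}}(s)\leq\log\varrho(\mathbf{F}_{\mathcal{D}}(s))$, which is weaker than the claim since $\varrho(\mathbf{F}_{\mathcal{C}}(s))$ can be strictly smaller than $\varrho(\mathbf{F}_{\mathcal{D}}(s))$. Controlling the full cylinder length by weights of paths confined to $\mathcal{C}$ is exactly where the diagram structure and the irreducibility hypothesis have to enter, whereas your argument invokes irreducibility only for the row-sum comparison. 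That comparison is itself unsupported when $\mathcal{C}$ is infinite: in an infinite irreducible graph there is no uniform bound on the length (hence on the weight) of connecting paths, so the ``bounded-length detours'' you appeal to need not exist, and the total sum $\sum_{C_0,C_n}\bigl[\mathbf{F}_{\mathcal{C}}(s)^n\bigr]_{C_0,C_n}$ over infinitely many rows may even diverge while $\|\mathbf{F}_{\mathcal{C}}(s)^n\|_{\infty}$ stays finite. Note that \eqref{md86} is needed precisely for infinite $\mathcal{C}$ (e.g.\ $\mathcal{C}=\mathcal{D}$), so this case cannot be waved away.

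For the equality case you correctly isolate the crux --- one word $\mathbf{k}$ can carry exponentially many (up to $K^n$) patterns $\mathbf{j}$, so passing between $\sum_{\mathbf{j}}$ and $\max_{\mathbf{j}}$ may a priori lose an exponential factor --- but you then only announce a plan (``I would fix a starting vertex\dots'', ``verifying \dots is the technical heart'') without carrying it out. Since $\Phi_{\mathcal{C}}(s)$ counts each admissible word once while $\|\mathbf{F}_{\mathcal{C}}(s)^n\|_{\infty}$ counts every labelled path, the lower bound $\Phi_{\mathcal{C}}(s)\geq\log\varrho(\mathbf{F}_{\mathcal{C}}(s))$ is exactly the assertion that this multiplicity does not change the exponential rate, and no proof of that assertion is given; the Perron--Frobenius facts you quote (simple eigenvalue, strictly positive eigenvectors, row sums of order $\varrho^n$) control the path sum, not the word sum. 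The geometric ingredients you do establish --- the inclusion $f_{(k_1,j_1)\cdots(k_n,j_n)}(C_n)\subset I_{\mathbf{k}}$ with length $|C_n|\prod_{q}|\rho_{k_q,j_q}|$, and $\delta:=\min_{C\in\mathcal{C}}|C|>0$ in the finite case --- are sound, but they only give $|I_{\mathbf{k}}|^s\geq\delta^s\max_{\mathbf{j}}\prod_{q}|\rho_{k_q,j_q}|^s$, which is off from what is needed by that same possibly-exponential multiplicity. So both halves of the lemma currently rest on unproved reductions between cylinder lengths (word sums) and $\mathcal{C}$-path sums, and the argument as it stands does not constitute a proof.
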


Thus if the Markov diagram is finite, the spectral radius of the associated matrix determines the natural pressure of the CPLIFS. We can use this property to calculate the natural dimension of CPLIFSs, that can be approximated by their finite subsystems.

\begin{definition}\label{md82}
  Let $\mathcal{F}$ be a CPLIFS and $\mathcal{Y}$ be a finite refinement of the monotonicity partition $\mathcal{Z}_0$. Let $(\mathcal{D}(\mathcal{Y}),\to)$ be the Markov diagram of $\mathcal{F}$ with respect to $\mathcal{Y}$, and let $\mathbf{F}(\mathcal{Y},s)$ be its associated matrix.

  We say that the CPLIFS $\mathcal{F}$ is \textbf{\nice{}} if there exists a $\mathcal{Y}$ such that for all $s\in(0,s_{\mathcal{F}}]$ the matrix $\mathbf{F}(\mathcal{Y},s)$ has right and left eigenvectors with nonnegative entries for the eigenvalue $\varrho (\mathbf{F}(\mathcal{Y},s))$.

  We call this finite partition $\mathcal{Y}$ a \textbf{limit-irreducible partition of} $\mathcal{F}$ and $(\mathcal{D}(\mathcal{Y}),\to)$ a \textbf{limit-irreducible Markov diagram of} $\mathcal{F}$. 
\end{definition}

\begin{proposition}[{\cite[Proposition~2.11]{prokaj2022fractal}}]\label{md81}
  Let $\mathcal{F}$ be a limit-irreducible CPLIFS, and let $(\mathcal{D},\to)$ be its limit-irreducible Markov diagram.  
  For any $\varepsilon >0$ there exists a $\mathcal{C}\subset \mathcal{D}$ finite subset such that 
  \begin{equation}\label{md80}
    \varrho (\mathbf{F}(s))-\varepsilon \leq \varrho (\mathbf{F}_{\mathcal{C}}(s)) \leq \varrho (\mathbf{F}(s)),
  \end{equation}
  where $\mathbf{F}(s)$ is the matrix associated to $(\mathcal{D},\to)$.
\end{proposition}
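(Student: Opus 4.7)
The statement splits into the routine upper bound $\varrho(\mathbf{F}_{\mathcal{C}}(s))\le\varrho(\mathbf{F}(s))$ and the substantial lower bound. For the upper bound, for any $\mathcal{C}\subset\mathcal{D}$ every directed $n$-path in $(\mathcal{C},\to)$ is also one in $(\mathcal{D},\to)$, so entrywise $\mathbf{F}_{\mathcal{C}}(s)^n\le\mathbf{F}(s)^n$ on $\mathcal{C}\times\mathcal{C}$; taking $\ell^\infty$-norms, $n$-th roots, and invoking Gelfand's formula gives the inequality without any use of limit-irreducibility.

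For the lower bound, set $\lambda:=\varrho(\mathbf{F}(s))$ and consider the canonical exhaustion $\mathcal{C}_n:=\mathcal{D}_n$ from \eqref{md60}. The sequence $\varrho_n := \varrho(\mathbf{F}_{\mathcal{C}_n}(s))$ is monotonically non-decreasing (enlarging the index set of a non-negative matrix can only add paths) and bounded above by $\lambda$, so $\varrho_n\nearrow\lambda_\infty\le\lambda$; the task reduces to showing $\lambda_\infty=\lambda$. The key is to exploit the non-negative left and right eigenvectors $u,v$ of $\mathbf{F}(s)$ for $\lambda$ supplied by limit-irreducibility (Definition~\ref{md82}). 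I would pass to the diagonally similar matrix $\widetilde F_{CD} := F(s)_{CD}v_D/(\lambda v_C)$, which is stochastic (because $\mathbf{F}(s)v=\lambda v$) and has stationary measure $\mu_C := u_Cv_C$ (because $u^\top\mathbf{F}(s)=\lambda u^\top$). Diagonal conjugation preserves spectra of principal submatrices, so $\varrho_n/\lambda = \varrho(\widetilde F_{\mathcal{C}_n})$, and the question becomes whether the substochastic restrictions $\widetilde F_{\mathcal{C}_n}$ have spectral radii approaching~$1$; equivalently, whether the exponential escape rate of the induced Markov chain from $\mathcal{C}_n$ tends to zero.

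The main obstacle is that a naive Collatz--Wielandt estimate using $v|_{\mathcal{C}_n}$ as a test vector only reproduces the upper bound: boundary vertices of $\mathcal{C}_n$ typically radiate most of their eigenvector mass into $\mathcal{D}\setminus\mathcal{C}_n$, so no pointwise bound on the tail flux $r^{(n)}_C := \sum_{D\notin\mathcal{C}_n}F(s)_{CD}v_D$ is available. The remedy is an averaged argument weighted by $\mu$: stationarity forces the boundary flux $\sum_{C\in\mathcal{C}_n,\,D\notin\mathcal{C}_n}u_CF(s)_{CD}v_D$ to equal the entry flux from outside, and one closes the estimate by comparing this total escape mass with the interior mass $\sum_{C\in\mathcal{C}_n}u_Cv_C$. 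When $\mu(\mathcal{D})=\sum_Cu_Cv_C<\infty$ the induced chain is positive recurrent and $\mu(\mathcal{D}\setminus\mathcal{C}_n)\to 0$ closes the argument immediately. The genuine difficulty lies in the case $\mu(\mathcal{D})=\infty$, where a further localization step is required, exploiting the iterative construction of the Markov diagram in \eqref{md60}: new vertices at level $n$ arise from $w^n(\mathcal{Z}_0)$ and correspond to cylinder intervals contracted by a product of at least $n$ factors from $(-1,1)\setminus\{0\}$, which forces the boundary flux to decay geometrically relative to the growth of $\mu(\mathcal{C}_n)$. This last step is where the full strength of limit-irreducibility --- the availability of globally non-negative left and right eigenvectors, rather than only finite-diagram Perron vectors --- is used decisively.
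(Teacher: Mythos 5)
You should first note that this paper does not actually prove Proposition \ref{md81}: it is quoted verbatim from \cite[Proposition~2.11]{prokaj2022fractal}, so there is no internal proof to match against, and your attempt must be judged on its own. Its first half is fine: the upper bound via entrywise domination of powers is correct, the monotonicity of $\varrho(\mathbf{F}_{\mathcal{D}_n}(s))$ is correct, and in the case $\sum_C u_C v_C<\infty$ your argument does close (an irreducible stochastic matrix with a finite stationary measure is positive recurrent, hence its truncated spectral radii tend to $1$) --- modulo two points you suppress: Definition \ref{md82} only gives \emph{nonnegative} eigenvectors, and a limit-irreducible diagram need not be irreducible, so the strict positivity of $v$ needed for your diagonal conjugation, and the irreducibility needed for the recurrence dichotomy, are not free.

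The genuine gap is the case $\mu(\mathcal{D})=\infty$, which you correctly identify as the hard case but then resolve by assertion (``the boundary flux decays geometrically relative to the growth of $\mu(\mathcal{C}_n)$''). No argument at your level of generality can fill this in, because your scheme uses only the existence of nonnegative left and right eigenvectors for $\varrho(\mathbf{F}(s))=\lim_n\lVert\mathbf{F}(s)^n\rVert_\infty^{1/n}$, and that hypothesis does not imply the conclusion for countable nonnegative matrices. Concretely, take the nearest-neighbour random walk on $\mathbb{Z}$ with $P_{i,i+1}=p>\tfrac12$, $P_{i,i-1}=q=1-p$: it has $\varrho_{\ell^\infty}(P)=1$ with right eigenvector $v\equiv 1$ and left eigenvector $u_i=(p/q)^i$, both nonnegative, yet every finite principal truncation satisfies $\varrho(P_{\mathcal{C}})\le 2\sqrt{pq}<1$. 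So the ``exponential escape rate'' to which you reduce the problem can stay bounded away from zero; this is exactly the Vere--Jones $R$-transient phenomenon, in which $\sup_{\mathcal{C}\,\mathrm{finite}}\varrho(\mathbf{F}_{\mathcal{C}}(s))$ equals a local convergence parameter that may be strictly below the $\ell^\infty$ growth rate. (Your flux-balance step is also only formal when $\mu$ is infinite, since the defining sums need not converge.) A correct proof therefore has to use structural features of the Markov diagram beyond the bare eigenvector data --- the finiteness of each level $\mathcal{D}_r$, the fact that all vertices are generated from the finite base $\mathcal{Z}_0$ by the Hofbauer-type successor construction, and the uniform weight bound $\rho_{\max}^s<1$ --- which your proposal invokes only in the closing hand-wave; this is precisely the content of the external proof in \cite{prokaj2022fractal} that the present paper cites.
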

  
This proposition and Lemma \ref{md84} together implies the following.
\begin{corollary}\label{pm71}
  Let $\mathcal{F}$ be a limit-irreducible CPLIFS with limit-irreducible Markov diagram $(\mathcal{D},\to)$ and attractor $\Lambda$. Let $\mathbf{F(s)}$ be the matrix associated to $(\mathcal{D},\to)$. Then for all $s\in (0,\dim_{\rm H}\Lambda]$
  \begin{equation}\label{pm99}
    \log\varrho(\mathbf{F}(s)) = \Phi(s).
  \end{equation}
\end{corollary}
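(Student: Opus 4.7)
The plan is to prove the two inequalities $\Phi(s)\le\log\varrho(\mathbf{F}(s))$ and $\Phi(s)\ge\log\varrho(\mathbf{F}(s))$ separately, treating \eqref{pm99} as an immediate consequence.

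\emph{Upper bound.} First, by Lemma \ref{md91} together with Remark \ref{md35}, I may refine the monotonicity partition (if necessary) so that the Markov diagram $(\mathcal{D},\to)$ is irreducible, without changing $\Phi_\mathcal{D}(s)=\Phi(s)$. Applying the first half of Lemma \ref{md84} with $\mathcal{C}=\mathcal{D}$ then yields $\Phi(s)\le\log\varrho(\mathbf{F}(s))$ directly.

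\emph{Lower bound.} Fix $s\in(0,\dim_{\rm H}\Lambda]$. Since Barreira's result gives $\dim_{\rm H}\Lambda\le s_{\mathcal{F}}$, this value of $s$ lies in the regime $(0,s_{\mathcal{F}}]$ covered by the limit-irreducible hypothesis, so Proposition \ref{md81} applies. Given $\varepsilon>0$, the proposition produces a finite subset $\mathcal{C}\subset\mathcal{D}$ with $\varrho(\mathbf{F}(s))-\varepsilon\le\varrho(\mathbf{F}_{\mathcal{C}}(s))$. Since $\mathbf{F}_{\mathcal{C}}(s)$ is a finite nonnegative matrix, its spectral radius equals the maximum of the spectral radii of the restrictions to the strongly connected components of the directed graph $(\mathcal{C},\to)$; I pick a component $\mathcal{C}'\subset\mathcal{C}$ realising this maximum, which is automatically finite and irreducible and satisfies $\varrho(\mathbf{F}_{\mathcal{C}'}(s))=\varrho(\mathbf{F}_{\mathcal{C}}(s))$. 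The equality case of Lemma \ref{md84} then gives $\Phi_{\mathcal{C}'}(s)=\log\varrho(\mathbf{F}_{\mathcal{C}'}(s))$, while the inclusion $\mathcal{C}'\subset\mathcal{D}$ combined with the definition \eqref{md88} forces $\Phi_{\mathcal{C}'}(s)\le\Phi_{\mathcal{D}}(s)=\Phi(s)$, because the paths summed over for $\mathcal{C}'$ form a subcollection of those summed over for $\mathcal{D}$. Chaining the estimates yields $\log(\varrho(\mathbf{F}(s))-\varepsilon)\le\Phi(s)$. Letting $\varepsilon\to 0$ gives the desired lower bound; the logarithm is harmless here because the already-established upper bound together with $\Phi(s)\ge 0$ for $s\in(0,s_{\mathcal{F}}]$ forces $\varrho(\mathbf{F}(s))\ge 1$.

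\emph{Expected difficulty.} None of the steps is individually deep: the essential work is carried out in Lemma \ref{md84} and Proposition \ref{md81}, and the present argument is a sandwiching argument combined with a standard Perron--Frobenius / condensation-of-strongly-connected-components reduction. The only point that needs care is precisely that reduction: Proposition \ref{md81} only guarantees a finite subset $\mathcal{C}$, whereas Lemma \ref{md84} needs irreducibility for the sharp equality, and this gap is bridged by restricting to a strongly connected component that inherits the spectral radius. Beyond this routine observation, I expect no further obstacles.
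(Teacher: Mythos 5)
Your proof is correct and follows essentially the paper's own route: the corollary is derived there precisely by sandwiching with Proposition \ref{md81} and the two halves of Lemma \ref{md84}, with irreducibility treated as harmless (cf.\ the remark preceding Lemma \ref{md91}, and Remark \ref{pm73}, which indicates that the finite subset produced by Proposition \ref{md81} may be taken irreducible). Your passage to a strongly connected component realising the spectral radius, and the WLOG reduction to an irreducible subdiagram for the upper bound, merely make explicit details the paper leaves implicit.
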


With the help of Proposition \ref{md81} and \cite[Corollary~7.2]{prokaj2021piecewise} we also proved the following result on the equality of dimensions of the attractor.
\begin{theorem}[{\cite[Theorem~2.12]{prokaj2022fractal}}]\label{md72}
    Let $\mathcal{F}$ be a \nice{} CPLIFS with attractor $\Lambda$ and 
    \nice{} Markov diagram $(\mathcal{D},\to)$. Assume that the generated self-similar system of $\mathcal{F}$ satisfies the ESC. Then
    \begin{equation}\label{md71}
      \dim_H \Lambda = \min \{1,s_{\mathcal{F}}\},
    \end{equation}
    where $s_{\mathcal{F}}$ denotes the unique zero of the natural pressure function $\Phi(s)$.
\end{theorem}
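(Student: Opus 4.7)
The upper bound $\dim_{\rm H}\Lambda \le \min\{1,s_{\mathcal{F}}\}$ is immediate: since $\Lambda\subset\mathbb{R}$ we have $\dim_{\rm H}\Lambda\le 1$, and Barreira's inequality mentioned after \eqref{cr61} gives $\dim_{\rm H}\Lambda\le s_{\mathcal{F}}$. Thus the whole problem reduces to the lower bound $\dim_{\rm H}\Lambda \ge \min\{1,s_{\mathcal{F}}\}$, and my plan is to obtain it by exhausting $\Lambda$ from below by finite graph-directed subsystems.

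The core idea is to combine the limit-irreducibility (which gives \emph{finite} approximants with well-controlled spectral radius via Proposition~\ref{md81}) with ESC (which upgrades each finite approximant to the dimension formula). Concretely, fix $\varepsilon>0$, and choose $s_0 \in (0,s_{\mathcal{F}})$ with $s_{\mathcal{F}}-s_0<\varepsilon$. Since $\Phi$ is strictly decreasing with $\Phi(s_{\mathcal{F}})=0$, we have $\Phi(s_0)>0$; by Corollary~\ref{pm71} this translates to $\varrho(\mathbf{F}(s_0))>1$. By Proposition~\ref{md81}, applied to the \nice{} Markov diagram, there exists a finite subset $\mathcal{C}\subset\mathcal{D}$ with
\[
  \varrho(\mathbf{F}_{\mathcal{C}}(s_0)) \ge \varrho(\mathbf{F}(s_0)) - \eta > 1
\]
for $\eta>0$ small enough. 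Passing to a strongly connected component if necessary (so Lemma~\ref{md91} applies and irreducibility holds), Lemma~\ref{md84} then yields $\Phi_{\mathcal{C}}(s_0)>0$. Consequently the zero $s_{\mathcal{C}}$ of $\Phi_{\mathcal{C}}$ satisfies $s_{\mathcal{C}}\ge s_0 \ge s_{\mathcal{F}}-\varepsilon$.

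Next, the finite irreducible diagram $(\mathcal{C},\to)$ together with the branches $f_{k,j}^{-1}$ associated to its edges defines a \emph{finite graph-directed self-similar IFS}, whose attractor $\Lambda_{\mathcal{C}}$ satisfies $\Lambda_{\mathcal{C}}\subset \Lambda$. The crucial step is to transfer ESC from the ambient self-similar system $\mathcal{S}_{\mathcal{F}}$ to this graph-directed system: any two distinct directed cycles in $(\mathcal{C},\to)$ of common length correspond to two distinct words in $\mathcal{S}_{\mathcal{F}}$, whose generated similarities are separated by at least $c^n$ for infinitely many lengths $n$ by Definition~\ref{md16}. This inheritance is exactly what is packaged in \cite[Corollary~7.2]{prokaj2021piecewise}, cited before the theorem, giving
\[
  \dim_{\rm H}\Lambda_{\mathcal{C}} = \min\{1,s_{\mathcal{C}}\}.
\]
Combined with $\Lambda_{\mathcal{C}}\subset\Lambda$ and the bound on $s_{\mathcal{C}}$, we obtain
\[
  \dim_{\rm H}\Lambda \;\ge\; \dim_{\rm H}\Lambda_{\mathcal{C}} \;=\; \min\{1,s_{\mathcal{C}}\} \;\ge\; \min\{1,s_{\mathcal{F}}-\varepsilon\},
\]
and letting $\varepsilon\to 0$ finishes the proof.

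The main obstacle I expect is the bookkeeping in the finite-approximation step: Proposition~\ref{md81} controls only spectral radii, but what we actually need is closeness of the \emph{zeros} of the respective pressure functions. This requires using continuity/monotonicity of $s\mapsto \varrho(\mathbf{F}(s))$ (which decreases from $>1$ through $1$ to $<1$ as $s$ crosses $s_{\mathcal{F}}$) together with the nonnegativity of the left/right eigenvectors built into limit-irreducibility, so that the Perron–Frobenius-type comparison $\varrho(\mathbf{F}_{\mathcal{C}}(s))\le\varrho(\mathbf{F}(s))$ remains valid and the approximation is one-sided as needed. A secondary technical point is confirming the correct form of ESC for the induced graph-directed system, but this is precisely the content of the cited corollary from \cite{prokaj2021piecewise}.
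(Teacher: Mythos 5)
Your proposal is correct and matches the route of the paper's (cited) proof: the paper states explicitly that Theorem~\ref{md72} was proved ``with the help of Proposition~\ref{md81} and \cite[Corollary~7.2]{prokaj2021piecewise}'', and your argument assembles exactly these ingredients — Barreira's inequality for the upper bound, a finite irreducible subdiagram with $\varrho(\mathbf{F}_{\mathcal{C}}(s_0))>1$ obtained from Proposition~\ref{md81} (passing to a dominant strongly connected component) together with Lemma~\ref{md84} for the lower bound, and the transfer of ESC to the finite graph-directed subsystem via the cited corollary. No genuinely different decomposition or lemma is introduced, so there is nothing further to compare.
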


In \cite[Proposition~3.1]{prokaj2022fractal}, we proved that a CPLIFS $\mathcal{F}$ is limit irreducible if its generated self-similar IFS $\mathcal{S}$ satisfies the ESC. In the proof, we only used the ESC to guarantee that crossing points cannot have periodic orbits starting with a branch that intersects another branch above them. Using the same argument, the next statement follows.

\begin{lemma}\label{pm64}
  Let $\mathcal{F}$ be a CPLIFS with generated self-similar IFS $\mathcal{S}$. If $\mathcal{S}$ has no exact overlappings then $\mathcal{F}$ is limit-irreducible.
\end{lemma}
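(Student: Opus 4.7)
The plan is to follow the proof of \cite[Proposition~3.1]{prokaj2022fractal} essentially verbatim, replacing only the single place where the Exponential Separation Condition was invoked by a direct appeal to the no-exact-overlapping hypothesis. As the authors of that paper point out in the passage preceding the statement, the ESC is used in their argument for exactly one purpose: to rule out the existence of a periodic orbit under $T$ of a crossing point whose initial branch intersects another branch above that crossing point. Once such configurations are excluded, the construction of a suitable finite refinement $\mathcal{Y}$ of the monotonicity partition and the verification that $\mathbf{F}(\mathcal{Y},s)$ has nonnegative left and right Perron eigenvectors on $(0, s_{\mathcal{F}}]$ proceeds by purely combinatorial/geometric arguments that do not rely on any separation assumption.

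First I would isolate that single step in the argument of \cite[Proposition~3.1]{prokaj2022fractal} and extract the contradiction it produces. Suppose, towards a contradiction, that $c\in\mathcal{K}$ is a crossing point, so that there exist $(k_1,j_1)\neq(k_2,j_2)$ with
\[
  f_{k_1,j_1}^{-1}(c) = f_{k_2,j_2}^{-1}(c),
\]
and suppose $c$ admits a periodic orbit of period $n$ under $T$ whose initial branch is, say, $(k_1,j_1)$ and whose subsequent branches $(k_3,j_3),\dots,(k_{n+1},j_{n+1})$ stay on a single side of the crossing. Composing the corresponding inverse branches around one period returns to $c$, and the same sequence of branches starting instead with $(k_2,j_2)$ also returns to $c$ by the crossing identity. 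Translating this back to forward compositions of similarities from $\mathcal{S}_{\mathcal{F}}$, we obtain two words $\mathbf{i}$ and $\mathbf{j}$ of the same length $n$ with $\mathbf{i}\neq\mathbf{j}$ (they differ in the first symbol, which records the branch used at $c$) such that the associated similarities agree in slope and image of $c$, forcing
\[
  S_{\mathbf{i}}\equiv S_{\mathbf{j}}.
\]
In the ESC setting this contradicts \eqref{md15} for large enough $n_l$; in our setting it directly contradicts Definition~\ref{pm57}.

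Second, with this replacement in hand, every remaining step of \cite[Proposition~3.1]{prokaj2022fractal} applies unchanged. In particular, one combines the resulting exclusion of these bad periodic structures with Lemma~\ref{md91} to pass to an irreducible subdiagram covering $\Lambda$, and then verifies that the finite refinement $\mathcal{Y}$ produced in the previous proof makes the associated matrix $\mathbf{F}(\mathcal{Y},s)$ admit nonnegative Perron eigenvectors for every $s\in(0,s_{\mathcal{F}}]$. This is precisely the definition of limit-irreducibility (Definition~\ref{md82}), concluding the proof.

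The main obstacle is the careful bookkeeping in the first step: one must check that the two branches producing $\mathbf{i}$ and $\mathbf{j}$ really are distinct at the level of indices (not merely of geometric images), so that the concluding equation $S_{\mathbf{i}}\equiv S_{\mathbf{j}}$ is genuinely an instance of exact overlapping. This amounts to tracing the definition of a crossing point in Definition~\ref{md57} — which explicitly requires $(k_1,j_1)\neq(k_2,j_2)$ — through the construction of the periodic orbit, and noting that the remaining branches in the orbit are the same in both constructions, so the first symbols of $\mathbf{i}$ and $\mathbf{j}$ differ. Once this verification is made, no further input beyond what is already in \cite[Proposition~3.1]{prokaj2022fractal} is needed.
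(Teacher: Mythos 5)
Your overall plan (rerun the proof of \cite[Proposition~3.1]{prokaj2022fractal} and swap the ESC invocation for the no-exact-overlapping hypothesis) is exactly the paper's strategy, but the central deduction of your first step has a genuine error. From the periodic orbit and the crossing identity $f^{-1}_{k_1,j_1}(c)=f^{-1}_{k_2,j_2}(c)$ you correctly conclude that the orbit returns to $c$ with either initial branch; however, the two resulting words $\mathbf{i}$ and $\mathbf{j}$ of length $n$ do \emph{not} satisfy $S_{\mathbf{i}}\equiv S_{\mathbf{j}}$. They differ precisely in the one symbol recording the initial branch, so their slopes are $\rho_{k_1,j_1}\cdot\rho$ and $\rho_{k_2,j_2}\cdot\rho$ for the common factor $\rho$ coming from the shared tail, and at a genuine crossing $\rho_{k_1,j_1}\neq\rho_{k_2,j_2}$ (if the two branches had equal slopes, agreeing at $c$ would make them identical affine maps, i.e.\ already a level-one exact overlap; and for two pieces of the same $f_k$, as in the relevant condition \eqref{pm66}, distinct adjacent slopes are built into the parameter space \eqref{cv50}). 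Two affine maps with distinct slopes sharing the fixed point $c$ are distinct; indeed $\mathrm{dist}(S_{\mathbf{i}},S_{\mathbf{j}})=\infty$ in the metric \eqref{md17}, so no exact overlapping is produced and the contradiction with Definition \ref{pm57} evaporates. (For the same reason your parenthetical claim that, under ESC, these two words would violate \eqref{md15} is also false: infinite distance never violates the lower bound.)

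The paper closes exactly this hole by symmetrizing over \emph{two} periods rather than one. Writing $\iiv=(k_1,j_1)\dots(k_n,j_n)$ for the common tail, it compares
\begin{equation*}
  S_{k,j_r}\circ S_{\iiv}\circ S_{k,j_l}\circ S_{\iiv}
  \quad\text{and}\quad
  S_{k,j_l}\circ S_{\iiv}\circ S_{k,j_r}\circ S_{\iiv}.
\end{equation*}
Each word contains both crossing branches exactly once and $\iiv$ twice, so the slopes agree (both equal $\rho_{k,j_l}\rho_{k,j_r}\rho_{\iiv}^2$), and since $S_{\iiv}(b)=f^{-1}_{k,j_l}(b)=f^{-1}_{k,j_r}(b)$, both compositions fix $b$; hence the two similarities are identical while the words differ in their first letter, which is the desired exact overlap. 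Your ``main obstacle'' paragraph flags the right kind of concern (distinctness at the level of indices) but misses the quantitative obstruction (distinctness of slopes), and it is precisely this obstruction that forces the two-period word-swap trick. Without it the proof does not go through.
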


\begin{proof}
  Let $b\in \mathcal{K}$ be a crossing point, and let $f^{-1}_{k,j_l},f^{-1}_{k,j_r}$ be two branches for which $f^{-1}_{k,j_l}(b)=f^{-1}_{k,j_r}(b)$. Arguing by contradiction, it is enough to show that the existence of a path $b\to_{k,j_l}\to\dots\to b$ in $(\mathcal{G},\to)$ would imply exact overlappings in $\mathcal{S}$.

  Let $n\geq 0$, and assume that for $(k_n,j_n)\dots(k_1,j_1)$ we have 
  \[
    f^{-1}_{(k_n,j_n)\dots(k_1,j_1)(k,j_l)}(b)=b.
  \]
  Writing $\iiv=(k_1,j_1)\dots(k_n,j_n)$, it follows that 
  \begin{equation*}
    S_{k,j_r}\circ S_{\iiv}\circ S_{k,j_l}\circ S_{\iiv} \equiv 
    S_{k,j_l}\circ S_{\iiv}\circ S_{k,j_r}\circ S_{\iiv},
  \end{equation*}
  as these two similarities have the same slope and they take the same value at $b$.
  That is the existence of such a word $(k_n,j_n)\dots(k_1,j_1)$ indeed implies exact overlaps in $\mathcal{S}$.
\end{proof}

\begin{remark}\label{pm63}
  The proof implies that limit-irreducibility is connected to the crossing points of the IFS. If none of the crossing points has a periodic orbit, then the Markov diagram of $\mathcal{F}$ is limit-irreducible. 
\end{remark}

\section{Results on continuity}\label{md24}

Let $\mathcal{F}=\{f_k\}_{k=1}^m$ be a CPLIFS. Recall that the critical points of $\mathcal{F}$ were defined as 
\begin{gather*}
  \mathcal{K}:=\cup_{k=1}^m \{ f_k(0),f_k(1)\} \bigcup 
  \cup_{k=1}^m \cup_{j=1}^{l(k)} f_k(b_{k,j}) \bigcup \\
  \left\{ x\in \mathcal{I} \big\vert \exists k_1,k_2\in [m], 
  \exists j_1\in[l(k_1)], \exists j_2\in[l(k_2)]:  
  f_{k_1, j_1}^{-1}(x)=f_{k_2, j_2}^{-1}(x) \right\}.
\end{gather*}
We call $\mathcal{K}^{\rm int}\subset \mathcal{K}$ the \textbf{set of inner critical points} if it contains all critical points of $\mathcal{F}$ that are interior points of $\mathcal{I}=\cup_{k=1}^m f_k(I)$, where $I$ is the interval defined in \eqref{cr22}.

For technical reasons, we will need to differentiate the endpoints of the monotonicity intervals. As neighbouring intervals share a common endpoint, we introduce a new topological space, where some points of the line are doubled.

Let $\mathcal{Z}_0$ be the monotonicity partition of $\mathcal{F}$, and let $\mathcal{Y}$ be a finite refinement of $\mathcal{Z}_0$. Now write $I=[u,v]$ for the supporting interval of $\mathcal{F}$, and define 
\begin{equation}\label{pm97}
  E:=\{\inf Y, \sup Y: Y\in \mathcal{Y}\}, 
  W:=\left(\bigcup_{j=0}^{\infty} T^{-j}(E\setminus \{u,v\})\right)\setminus\{u,v\},
\end{equation}
where $T^{-1}(A)$ is the preimage of the set $A\subset\mathbb{R}$ by the multi-valued mapping $T$. Observe that $E=\mathcal{K}$ for $\mathcal{Y}=\mathcal{Z}_0$, thus we can say that $E$ takes the role of the set of critical points for finite refinements of $\mathcal{Z}_0$.
We define $\mathbb{R}_{\mathcal{Y}}:=\mathbb{R}\setminus W \bigcup \{x^{-}, x^{+}: x\in W\}$, and set the order $y<x^{-}<x^{+}<z$ if $y<x<z$ holds in $\mathbb{R}$. 

In our new set $\mathbb{R}_{\mathcal{Y}}$, the endpoints of all elements of $\mathcal{Y}$ and their preimages by $T$ are doubled, except $u$ and $v$. We define the projection $\pi:\mathbb{R}_{\mathcal{Y}}\to \mathbb{R}$ as 
\begin{equation}\label{pm96}
  \pi(y)=x \mbox{ if either } y=x\in\mathbb{R} \mbox{ or } y\in\{x^{-}, x^{+}\}.
\end{equation}
This mapping not just connects $\mathbb{R}_{\mathcal{Y}}$ to $\mathbb{R}$, but also preserves the ordering
\[
  y,z\in\mathbb{R}_{\mathcal{Y}}: y<z \implies \pi(y)<\pi(z)
  \mbox{ or } y=x^{-}, z=x^{+} \mbox{ for some } x\in W.
\]
Later it will be useful to jump from one doubled copy to another, so we define the function $\zeta:\pi^{-1}(W)\to \pi^{-1}(W)$
\[
  x\in W: \zeta(x^{+})=x^{-}, \zeta(x^{-})=x^{+}.
\]
For convenience, we extend this map to $\mathbb{R}_{\mathcal{Y}}$ by setting $\zeta(x)=x$ for $x\in\mathbb{R}\setminus \pi^{-1}(W)$.

From now on, we will work on the topological space $\mathbb{R}_{\mathcal{Y}}$ endowed with the order topology. So far we defined every set on $\mathbb{R}$, hence we need to define their counterparts on $\mathbb{R}_{\mathcal{Y}}$. Let $\mathcal{I}_{\mathcal{Y}}$ be the closure of $\mathcal{I}\setminus W$ in $\mathbb{R}_{\mathcal{Y}}$. Observe that $\mathcal{I}_{\mathcal{Y}}$ is compact. We define 
\begin{equation}\label{pm95}
  E_{\mathcal{Y}}:= \{x\in\mathbb{R}_{\mathcal{Y}}: \pi(x)\in E\}\bigcap \mathcal{I}_{\mathcal{Y}}.
\end{equation}
We emphasize that taking the intersection with $\mathcal{I}_{\mathcal{Y}}$ is important, as we are only interested in the endpoints of the monotonicity intervals and not the endpoints of the possible gaps between them.

We write $N:=|\mathcal{Y}|$ for the cardinality of $\mathcal{Y}$ and $E_{\mathcal{Y}}=\{a_1,\dots, a_{2N}\}$ with $a_1<\dots<a_{2N}$. The multi-valued mapping $T|_{\mathcal{I}\setminus W\cup E}$ uniquely extends on $\mathbb{R}_{\mathcal{Y}}$ to a mapping $T_{\mathcal{Y}}$. Similarly, let $\mathcal{K}_{\mathcal{Y}}$ and $\mathcal{K}^{\rm int}_{\mathcal{Y}}$ denote the set of critical points and inner critical points of $T_{\mathcal{Y}}$ respectively. We will supress naming the partition in the lower indices when $\mathcal{Y}=\mathcal{Z}_0$.

Now we define a directed graph $(\mathcal{G},\to)$, different from the Markov diagram, that describes the orbit of the elements of $E_{\mathcal{Y}}$. For $i\in[2N]$ let 
\begin{equation}\label{pm94}
  \omega(i):=\{\iiv=((k_1,j_1),\dots ,(k_n,j_n)) : f^{-1}_{\iiv}(a_i) \in \mathcal{I}_{\mathcal{Y}} \}.
\end{equation}
Note that $\omega(i)$ contains the empty word $\emptyset$ for all $i\in[2N]$, as $a_i\in \mathcal{I}_{\mathcal{Y}}$ for all $a_i\in E_{\mathcal{Y}}$. 
If $\iiv\in\omega(i)$, then $\iiv|_j\in\omega(i)$ as well for all $j\leq|\iiv|$, where $\iiv|_j$ is the word contisting of the first $j$ characters of $\iiv$. 
We define 
\begin{equation}\label{pm93}
  \forall i\in[2N], \forall \iiv\in\omega(i): 
  a_{i,\iiv}:=f^{-1}_{\iiv}(a_i).
\end{equation}

Set $\mathcal{G}:=\{a_{i,\iiv}: a_i\in \mathcal{K}_{\mathcal{Y}}, \iiv\in\omega(i)\}$. For $a,b\in \mathcal{G}$, there is an edge $a\to_{k,j} b$ in the graph $(\mathcal{G},\to)$ if and only if $b=f^{-1}_{k,j}(a)$ or $\zeta(b)=f^{-1}_{k,j}(a)$. We write $a\to b$ if there exists an edge $a\to_{k,j} b$ for some $k\in[m]$ and $j\in[l(k)]$. Observe that this graph does not depend on the partition $\mathcal{Y}$. We call $(\mathcal{G},\to)$ the \textbf{orbit graph of critical points} of $\mathcal{F}$.

Following Definition \ref{md90}, we associate the matrix $\mathbf{G}(s)$, indexed by the elements of $\mathcal{G}$, to the graph $(\mathcal{G},\to)$
\begin{equation}\label{pm92}
  \left[ \mathbf{G}(s)\right]_{a,b} := \begin{cases}
    \sum_{(k,j): a\to_{(k,j)} b} \vert f_{k,j}^{'}\big\vert^s \mbox{, if $a\to b$} \\
    0 \mbox{, otherwise.}
  \end{cases} 
\end{equation}
As $(\mathbf{G}(s))^n$ is always bounded in the $l^{\infty}$-norm, we can define 
\[
\varrho (\mathbf{G}(s)) := \lim_{n\to \infty} \lVert (\mathbf{G}(s))^n\rVert_{\infty} ^{1/n}. 
\]

The following technical lemma will be useful later.
\begin{lemma}\label{pm91}
  Let $\mathcal{F}$ be a CPLIFS with generated self-similar IFS $\mathcal{S}$ and orbit graph $(\mathcal{G},\to)$. Let $\mathbf{F}(s)$ and $\mathbf{G}(s)$ be the matrices associated to the Markov diagram and the orbit graph of critical points respectively. If $\mathcal{S}$ doesn't have exact overlappings, then 
  \begin{equation}\label{pm90}
    \varrho(\mathbf{G}(s)) \leq \varrho(\mathbf{F}(s)). 
  \end{equation} 
\end{lemma}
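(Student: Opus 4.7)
My strategy is to establish the pointwise comparison $\|\mathbf{G}(s)^n\|_\infty \leq \|\mathbf{F}(s)^n\|_\infty$ for every $n\geq 1$, from which the spectral radius inequality follows immediately by taking $n$-th roots and letting $n\to\infty$. I will obtain this comparison by constructing a weight-preserving, injective lift of directed paths in the orbit graph $(\mathcal{G},\to)$ to directed paths in the Markov diagram $(\mathcal{D},\to)$.

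The key observation is that every vertex $a\in\mathcal{G}$ is by construction an iterated preimage of a critical point, so $a$ lies at the boundary of at least one element of $\mathcal{D}$. Using the doubling in $\mathbb{R}_{\mathcal{Y}}$, each $a = x^{\pm}$ canonically distinguishes a single element $\psi(a)\in\mathcal{D}$ whose endpoint in the doubled space equals $a$ on the side dictated by the sign: the monotonicity piece to the right if $a = x^{+}$, to the left if $a = x^{-}$. Given an edge $a \to_{k,j} b$ in $\mathcal{G}$, the definition gives $b = f_{k,j}^{-1}(a)$ up to the $\zeta$-flip, with weight $|f_{k,j}'|^{s}$, and $\psi(b)$ is naturally a successor of $\psi(a)$ via branch $(k,j)$: it is the monotonicity piece abutting $b$ that lies inside $f_{k,j}^{-1}(\psi(a))$. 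Iterating branch by branch, every path $a_{0}\to_{k_{1},j_{1}}\cdots\to_{k_{n},j_{n}} a_{n}$ in $\mathcal{G}$ lifts to a path $\psi(a_{0})\to_{k_{1},j_{1}}\cdots\to_{k_{n},j_{n}}\psi(a_{n})$ in $\mathcal{D}$ with identical sequence of labels and identical total weight $\prod_{i}|f_{k_{i},j_{i}}'|^{s}$. Two distinct paths in $\mathcal{G}$ starting at the same $a$ produce two distinct lifts in $\mathcal{D}$: either their label sequences differ (so the resulting Markov-diagram paths differ), or they differ in a $\zeta$-choice at some step $i$, which forces $\psi(a_{i})$ to land on opposite sides of a critical point and hence to be genuinely different elements of $\mathcal{D}$.

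Summing weighted paths of length $n$ starting at $a$ then gives
\[
  \sum_{b\in\mathcal{G}}[\mathbf{G}(s)^{n}]_{a,b} \;\leq\; \sum_{D\in\mathcal{D}}[\mathbf{F}(s)^{n}]_{\psi(a),D} \;\leq\; \|\mathbf{F}(s)^{n}\|_{\infty},
\]
so $\|\mathbf{G}(s)^{n}\|_{\infty}\leq\|\mathbf{F}(s)^{n}\|_{\infty}$ and the spectral radius bound follows. I expect the main obstacle to be the careful bookkeeping in the doubled space $\mathbb{R}_{\mathcal{Y}}$: one must verify that the edge-lifting is well defined and injective even in the delicate case when $f_{k,j}^{-1}(a)$ coincides with a doubled critical point, so that the $\zeta$-clause in the definition of $\mathcal{G}$ produces two edges $a\to_{k,j} b$ and $a\to_{k,j}\zeta(b)$ that must correspond to two distinct successors of $\psi(a)$ in $\mathcal{D}$. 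The no-exact-overlap hypothesis is used through Lemma~\ref{pm64}, which guarantees that $\mathcal{F}$ is limit-irreducible and thereby provides the regularity of $(\mathcal{D},\to)$ under which the pullback argument cleanly applies.
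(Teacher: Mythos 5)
There is a genuine gap, and it sits exactly where your plan flags the ``delicate case''. Your edge-lift is fine for edges with $b=f_{k,j}^{-1}(a)$, but it breaks down for the $\zeta$-flip edges $a\to_{k,j}b$ with $\zeta(b)=f_{k,j}^{-1}(a)$. In the Markov diagram every successor of $C=\psi(a)$ via the branch $(k,j)$ is of the form $Z_0\cap f_{k,j}^{-1}(C)$ and hence is contained in $f_{k,j}^{-1}(C)$; but the monotonicity piece abutting the flipped copy $b$ lies on the \emph{opposite} side of the point $f_{k,j}^{-1}(a)$, outside $f_{k,j}^{-1}(C)$, so it is not a successor of $\psi(a)$ at all. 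Thus a $\mathcal{G}$-path containing a flip admits no label-preserving lift to $(\mathcal{D},\to)$, and your inequality $\|\mathbf{G}(s)^{n}\|_{\infty}\leq\|\mathbf{F}(s)^{n}\|_{\infty}$ is not established. A structural sanity check confirms the argument cannot be repaired as stated: your proof never really uses the no-exact-overlap hypothesis (invoking Lemma \ref{pm64} for ``regularity'' is vacuous here --- limit-irreducibility is about nonnegative eigenvectors of $\mathbf{F}(s)$ and plays no role in any path pullback), so if your lift worked the lemma would hold unconditionally. It does not: when a crossing point $d\in\mathcal{K}^{\rm int}$ with $f_{k,j_l}^{-1}(d)=f_{k,j_r}^{-1}(d)$ has a periodic orbit starting with one of these branches, the orbit graph carries extra cycles (built from flips) with no counterpart in $(\mathcal{D},\to)$, and $\varrho(\mathbf{G}(s))$ can exceed $\varrho(\mathbf{F}(s))$.

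The paper's proof is organized around precisely this obstruction. It fixes an infinite path in $(\mathcal{G},\to)$ and looks at the set $J$ of indices where a $\zeta$-flip occurs; if $J=\emptyset$ the path lifts to $(\mathcal{D},\to)$ just as in your construction, and since every flip lands in the finite set $\mathcal{K}^{\rm int}$, the inequality \eqref{pm90} can only fail if some inner critical point has a periodic orbit, and indeed only in the configuration \eqref{pm66} where the orbit starts at a point where two branches of the same map cross. The hypothesis then enters concretely: such a periodic orbit gives $S_{\iiv(k,j_l)}(d)=d$ for some word $\iiv$, forcing the exact overlap $S_{\iiv (k,j_l) \iiv (k,j_r)}\equiv S_{\iiv (k,j_r) \iiv (k,j_l)}$ (same slope, same value at $d$), contradicting the assumption on $\mathcal{S}$. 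So the missing idea in your proposal is the case analysis of flip edges: you must isolate them, show that only recurrent flips through crossing points can inflate the spectral radius, and use the no-exact-overlap hypothesis to exclude exactly those.
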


\begin{proof}
  Fix a path of infinite length $d_1\to d_2\to\dots$ in $(\mathcal{G},\to)$. Let $J\subset\mathbb{N}$ be the set of indices such that 
  \[
    \forall i\in J:\quad d_i\in\{ \zeta(d): d\in T_{\mathcal{Y}}(d_{i-1})\}.
  \]
  Note that $i\in J$ implies $d_i\in \mathcal{K}^{\rm int}$.
  If $J=\emptyset$, then there exists a corresponding path in $(\mathcal{D},\to)$. Namely, the path $D_1\to D_2\to\dots$, where for all $i\geq 1, d_i$ is an endpoint of $D_i$.
  
  Clearly, \eqref{pm90} can fail only if there is a path in $(\mathcal{G},\to)$ that has bigger weight than any constant multiple of the weight of any path in $(\mathcal{D},\to)$. It can only happen if $J$ is an infinite set. As $|\mathcal{K}^{\rm int}|<\infty$, it is equivalent to a $d\in \mathcal{K}^{\rm int}$ having a periodic orbit.
  However, a periodic orbit in general does not imply $\varrho(\mathbf{G}(s))>\varrho(\mathbf{F}(s))$.

  The elements of $\mathcal{K}^{\rm int}$ fall into three categories: images of breaking points, crossing points, and endpoints of overlapping first cylinders.
  It is easy to see that $\varrho(\mathbf{G}(s))>\varrho(\mathbf{F}(s))$ can only happen if the following is satisfied
  \begin{align}\label{pm66}
    \exists d\in \mathcal{K}^{\rm int},\exists k\in[m],\exists j_l,j_r\in &[l(k)+1]: \\ f^{-1}_{k,j_l}(d)=f^{-1}_{k,j_r}(d), &\mbox{ and } \exists\: d\to_{k,j_l}\dots\to d \mbox{ path in } (\mathcal{G},\to). \nonumber
  \end{align}
  That is we must have a $d\in\mathcal{K}^{\rm int}$ which is an image of a breaking point and has a periodic orbit starting with the right branch. 

  Assume now that there exists a $d\in \mathcal{K}^{\rm int}$ and branches $(k,j_l), (k,j_r)$ satisfying \eqref{pm66}. It follows that for some $n\geq 0$
  \begin{equation}\label{pm65}
    \exists \;\iiv=((k_1,j_1),\dots,(k_n,j_n)): 
    S_{\iiv (k,j_l)}(d) = d. 
  \end{equation}
  Since $\mathcal{S}=\{S_{k,j}\}_{k\in[m],j\in[l(k)]}$ is a self-similar IFS, \eqref{pm65} implies exact overlappings in the system. Namely, the functions $S_{\iiv (k,j_l) \iiv (k,j_r)}$ and $S_{\iiv (k,j_r) \iiv (k,j_l)}$ are identical.

\end{proof}

\subsection{Continuity of the Markov diagram}

We want to show that the natural dimension of a given CPLIFS does not change too much if we perturb its parameters. To do this, we need to define what we mean by two CPLIFSs being close to each other.
\begin{definition}\label{pm82}
  Let $[c,d], [\widehat{c},\widehat{d}]\subset \mathbb{R}$ be two closed intervals. We say that they are $\pmb{\varepsilon}$\textbf{-close} if 
  \[
    |c-\widehat{c}|<\varepsilon \mbox{ and } |d-\widehat{d}|<\varepsilon.
  \] 
\end{definition}

\begin{definition}\label{pm89}
  Let $\mathcal{Y}=\{Y_1,\dots,Y_N\}$ and $\widehat{\mathcal{Y}}=\{\widehat{Y}_1,\dots,\widehat{Y}_N\}$ be finite partitions.
  We say that they are $\pmb{\varepsilon}$\textbf{-close} if for all $j\in[N], Y_j$ and $\widehat{Y}_j$ are $\varepsilon$-close. 
\end{definition}

\begin{definition}\label{pm83}
  We say that the CPLIFSs $\mathcal{F}=\{f_k\}_{k=1}^m$ and $\widehat{\mathcal{F}}=\{\widehat{f}_k\}_{k=1}^m$ are $\pmb{\varepsilon}$\textbf{-close} if 
  \begin{enumerate}[{\bf (a)}]
    \item their monotonicity partitions are $\varepsilon$-close,
    \item $\forall k\in[m]: f_k$ \mbox{ and } $\widehat{f}_k$ has the same number of breaking points,
    \item $\forall k\in[m], \forall j\in[l(k)]: \left|\log |f^{\prime}_{k,j}|-\log |\widehat{f}^{\prime}_{k,j}|\right|<\varepsilon.$ 
    \item $\forall k\in[m]: \|\widehat{f}_k-f_k\|_{\infty}<\varepsilon$
  \end{enumerate}
\end{definition}

Let $\mathcal{F}$ be a CPLIFS with associated multi-valued mapping $T$ and monotonicity partition $\mathcal{Z}_0$. Let $\mathcal{Y}$ be a finite refinement of $\mathcal{Z}_0$ and $(\mathcal{D},\to)$ be the Markov diagram of $\mathcal{F}$ with respect to $\mathcal{Y}$. We define 
\begin{equation}\label{pm88}
  \mathcal{M}:=\{(i,\iiv): i\in[2N], \iiv\in\omega(i)\},
\end{equation}
and $\mathcal{M}_r:=\{(i,\iiv)\in \mathcal{M}: |\iiv|\leq r\}$ for $r\in \mathbb{N}_0$. P. Raith \cite[p~108]{raith1992continuity} proved that we can think of the Markov diagram as the image of $\mathcal{M}$ under a special mapping.
While \cite{raith1992continuity} only investigates the case when $T$ is an expansive mapping of the line and not multi-valued, the mapping $A$ constructed in the proof works in our case as well, since we differentiate between the images generated by the different branches.  

\begin{lemma}\label{pm87}
  There exists a mapping $A:\mathcal{M}\to \mathcal{D}$ with the following properties.
  \begin{enumerate}[{\bf (a)}]
    \item $A(\mathcal{M})=\mathcal{D}$ and $A(\mathcal{M}_r)=\mathcal{D}_r$ for all $r\in\mathbb{N}_0$.
    \item $a_i$ is an endpoint of $A(i,\emptyset)$ for all $i\in[2N]$.
    \item $a_{i,\iiv}$ is an endpoint of $A(i,\iiv)$ for all $(i,\iiv)\in\mathcal{M}$.
    \item Let $k\in[m]$ and $j\in[l(k)]$. We introduce a graph structure on $\mathcal{M}$ such that $c\to_{(k,j)} d$ in $\mathcal{M}$ implies $A(c)\to_{(k,j)} A(d)$ in $\mathcal{D}$. For every $c\in\mathcal{M}$ the map $A$ is bijective from $\{d\in\mathcal{M}: c\to_{(k,j)} d\}$ to $\{D\in\mathcal{D}: A(c)\to_{(k,j)} D\}$.
    \item $c\in \mathcal{M}_r$ implies the existence of a $d\in \mathcal{M}_r$ with $A(c)\subset A(d)$ and either $A(c)=[a_d,a_c]$ or $A(c)=[a_c,a_d]$. 
  \end{enumerate}
\end{lemma}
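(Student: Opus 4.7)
The construction proceeds recursively on $r=|\iiv|$, paralleling the inductive definition $\mathcal{D}_r=\cup_{i=0}^r w^i(\mathcal{Z}_0)$; since our mapping $T$ is multi-valued, we keep track of the branch labels $(k,j)$ at every step, which is the adaptation alluded to after the lemma statement. For $r=0$, each $a_i\in E_{\mathcal{Y}}$ is the endpoint of a unique element $Y_i\in\mathcal{Y}$ in the topology of $\mathbb{R}_{\mathcal{Y}}$, because the doubling of points in $W$ disambiguates shared endpoints; set $A(i,\emptyset):=Y_i$, which gives property (b). For $r\geq 1$, write $\iiv=\jjv\cdot(k,j)$ and assume $A$ is defined on $\mathcal{M}_{r-1}$; then $f_{k,j}^{-1}(A(i,\jjv))$ is a well-defined interval containing $a_{i,\iiv}=f_{k,j}^{-1}(a_{i,\jjv})$ as an endpoint, and its intersections with elements of $\mathcal{Y}$ enumerate the successors of $A(i,\jjv)$ in $\mathcal{D}$ along branch $(k,j)$. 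Define $A(i,\iiv)$ to be the unique such successor for which $a_{i,\iiv}$ is an endpoint with the appropriate $\mathbb{R}_{\mathcal{Y}}$-orientation; this gives (c), and the edge-preservation half of (d) is immediate from $A(i,\iiv)\in w(A(i,\jjv))$ via $(k,j)$.

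For the bijectivity half of (d), fix $c\in\mathcal{M}$ and a branch $(k,j)$. The successors of $A(c)$ along $(k,j)$ are exactly the nonempty pieces of $f_{k,j}^{-1}(A(c))$ cut by $\mathcal{Y}$, and each such piece carries a distinguished endpoint that is either $f_{k,j}^{-1}(a_c)$ --- yielding the continuation $(i,\jjv\cdot(k,j))$ when $c=(i,\jjv)$ --- or a preimage $f_{k,j}^{-1}(a_{i'})$ for some other $a_{i'}\in E_{\mathcal{Y}}$ lying inside $A(c)$ --- yielding a different element of $\mathcal{M}$. Matching pieces to pairs gives the required bijection. Property (a) then follows by induction: the recursive construction directly shows $A(\mathcal{M}_r)\subseteq\mathcal{D}_r$, and surjectivity on $\mathcal{D}_r$ follows by lifting each new $D\in\mathcal{D}_r\setminus\mathcal{D}_{r-1}$ through the bijection just established.

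The main obstacle is property (e). By the construction, both endpoints of $A(c)$ must be of the form $a_{i'',\iiv''}$ for some $(i'',\iiv'')\in\mathcal{M}_r$: one is $a_c$ itself by (c), and the other arises as the image, under the relevant composition of branch inverses, of an endpoint of whichever element of $\mathcal{Y}$ most recently cut the iterate to its current shape. Taking $\iiv''$ to be the prefix of $\iiv$ at the stage when that cut occurred and $i''\in[2N]$ to be the index of the cutting endpoint in $E_{\mathcal{Y}}$, the pair $d:=(i'',\iiv'')$ lies in $\mathcal{M}_r$ and satisfies $a_d=$ the other endpoint of $A(c)$. The nesting $A(c)\subset A(d)$ holds because $A(d)$ is exactly the element of $\mathcal{D}_{|\iiv''|}$ that was being refined to produce $A(c)$. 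The subtle point throughout, and the main place where Raith's single-valued expansive argument needs adapting, is the careful tracking of the $\pm$ decoration in $\mathbb{R}_{\mathcal{Y}}$ across successive branch inverses, whose orientation may flip when the slope $\rho_{k,j}$ is negative.
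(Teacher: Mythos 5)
Your overall route coincides with the paper's: the paper does not write out a proof of this lemma at all, but instead invokes the construction of Raith \cite[p.~108]{raith1992continuity}, with the single adaptation that the branch labels $(k,j)$ are tracked because $T$ is multi-valued. Your recursion --- base case $A(i,\emptyset):=$ the element of $\mathcal{Y}$ with endpoint $a_i$ (unique in $\mathbb{R}_{\mathcal{Y}}$ thanks to the doubled points), inductive step $A(i,\jjv\cdot(k,j)):=$ the piece of $f_{k,j}^{-1}(A(i,\jjv))$ cut by $\mathcal{Y}$ having $a_{i,\iiv}$ as endpoint --- is exactly that construction in outline, so the approach is the intended one.

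However, two steps fail as formalized. First, in your bijection argument for (d), the cut points of $f_{k,j}^{-1}(A(c))$ are the points $a_{i'}\in E_{\mathcal{Y}}$ lying in its interior, \emph{not} preimages $f_{k,j}^{-1}(a_{i'})$ of points of $E_{\mathcal{Y}}$ inside $A(c)$; the latter belong to $W$ and are only cut at later levels. More seriously, your two-case enumeration of the successor pieces omits a third case: the extreme piece at the far end of $f_{k,j}^{-1}(A(c))$ is in general a \emph{proper} subinterval of the partition element $Y_{i'}$ it meets, so it cannot be coded $(i',\emptyset)$, since $A(i',\emptyset)=Y_{i'}$ is the full element. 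It must instead be coded by $(i'',\iiv''\cdot(k,j))$, where $d=(i'',\iiv'')$ is precisely the element supplied by property (e) coding the \emph{other} endpoint of $A(c)$. This feedback of (e) into the successor matching of (d) is the crux of Raith's inductive scheme; with only your two cases the map $A$ is not bijective from $\{d\in\mathcal{M}:c\to_{(k,j)}d\}$ onto $\{D\in\mathcal{D}:A(c)\to_{(k,j)}D\}$. Second, in your argument for (e) the word $\iiv''$ must be the \emph{suffix} of $\iiv$ consisting of the branches applied after the stage of the last cut on the far side (so that $a_{i'',\iiv''}$, the image of the cutting endpoint $a_{i''}$ under the inverse branches applied subsequently, is the other endpoint of $A(c)$, and $|\iiv''|\leq|\iiv|$ places $d$ in $\mathcal{M}_r$), not the prefix up to that stage; with the prefix, $a_d$ bears no relation to the endpoints of $A(c)$. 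Your surrounding prose (``the image, under the relevant composition of branch inverses, of an endpoint of whichever element of $\mathcal{Y}$ most recently cut the iterate'') contains the correct idea, so both defects are repairable within your scheme, but as written properties (d) and (e) do not hold.
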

This map $A$ will be surjective, but need not be injective. A $C\in\mathcal{D}$ can be represented by multiple elements of $\mathcal{M}$. It implies that there might be several subsets in $\mathcal{M}$ whose image under $A$ is $\mathcal{D}$. Each of them can take the role of the Markov diagram if they satisfy the following definition.

\begin{definition}\label{pm86}
  $(\mathcal{A},\to)$ is called a \textbf{variant of the Markov diagram} of $\mathcal{F}$ with respect to $\mathcal{Y}$ if $\mathcal{A}\subset \mathcal{M}$ satisfies the following properties for all $k\in[m]$ and $j\in[l(k)]$.
  \begin{enumerate}[{\bf (a)}]
    \item For $i\in[2N]$ and $\iiv\in\omega(i)$, $(i,\iiv)\in \mathcal{A}$ implies $(i,\iiv|_j)\in \mathcal{A}$ for all $j\leq |\iiv|$.
    \item $c,d\in \mathcal{A}$ and $c\to_{(k,j)} d$ in $\mathcal{M}$ imply $c\to_{(k,j)} d$ in $\mathcal{A}$. 
    \item $c,d\in \mathcal{A}$ and $c\to_{(k,j)} d$ in $\mathcal{A}$ imply either $c\to_{(k,j)} d$ in $\mathcal{M}$ or there exists a $d_0\in \mathcal{M}\setminus\mathcal{A}$ with $c\to_{(k,j)} d_0$ and $A(d)=A(d_0)$.
    \item For $c\in\mathcal{A}$ the map $A:\{d\in\mathcal{A}: c\to_{(k,j)} d\} \to \{D\in\mathcal{D}: A(c)\to_{(k,j)} D\}$ is bijective.
    \item $A(\mathcal{A}\cap\mathcal{M}_r)=\mathcal{D}_r$ for $r\in\mathbb{N}_0$.
  \end{enumerate}
\end{definition}

Observe that $(\mathcal{M},\to)$ and $(\mathcal{D},\to)$ are also variants of the Markov diagram of $\mathcal{F}$ with respect to $\mathcal{Y}$. For $r\in\mathbb{N}_0$ set $\mathcal{A}_r:=\mathcal{A}\cap\mathcal{M}_r$.

Let $(\mathcal{A},\to)$ be a variant of the Markov diagram. We write $\mathbf{F}^{\mathcal{A}}(s)$ for the matrix associated to $(\mathcal{A},\to)$, and define it analogously to \ref{md89}. The following lemma is a straightforward consequence of Definition \ref{pm86}.

\begin{lemma}\label{pm84}
  Let $\mathcal{F}$ be a CLPIFS, and let $\mathcal{Y}$ be a finite refinement of its monotonicity partition $\mathcal{Z}_0$. Let $(\mathcal{A},\to), (\mathcal{A}^{\prime},\to)$ be two variants of the Markov diagram of $\mathcal{F}$ with respect to $\mathcal{Y}$. Then 
  \[
    \varrho(\mathbf{F}^{\mathcal{A}}(s)) = \varrho(\mathbf{F}^{\mathcal{A}^{\prime}}(s)).
  \]
\end{lemma}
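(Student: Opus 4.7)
The plan is to reduce the equality to the single statement that $\varrho(\mathbf{F}^{\mathcal{A}}(s)) = \varrho(\mathbf{F}(s))$ for every variant $(\mathcal{A}, \to)$ of the Markov diagram, and then apply this twice (once to $\mathcal{A}$ and once to $\mathcal{A}'$). Since $(\mathcal{D},\to)$ is itself a variant, this reduction is legitimate. I will work at the level of matrix powers: because $\|M\|_\infty$ equals the maximum absolute row sum and the entries of $\mathbf{F}^{\mathcal{A}}(s)^n$ and $\mathbf{F}(s)^n$ are nonnegative, it suffices to compare row sums of the $n$-th powers for every $n \geq 1$ and then pass to the limit in the definition of $\varrho$.

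The main computation is to show that for every $c \in \mathcal{A}$ and every $n \geq 1$,
\[
\sum_{d \in \mathcal{A}} [(\mathbf{F}^{\mathcal{A}}(s))^n]_{c,d} = \sum_{D \in \mathcal{D}} [(\mathbf{F}(s))^n]_{A(c),D}.
\]
Expanding the $n$-th power and grouping by branch-label sequences, both sides equal
\[
\sum_{(k_1,j_1),\dots,(k_n,j_n)} \left(\prod_{i=1}^n |f'_{k_i,j_i}|^s\right) \cdot \#\bigl\{\text{length-}n\text{ paths starting at the given vertex with these labels}\bigr\},
\]
so the required equality reduces to a label-preserving bijection between length-$n$ paths in $(\mathcal{A},\to)$ starting at $c$ and length-$n$ paths in $(\mathcal{D},\to)$ starting at $A(c)$. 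Property \textbf{(d)} of Definition \ref{pm86} is exactly the single-step version: $A$ is a bijection between $\{d \in \mathcal{A}: c \to_{(k,j)} d\}$ and $\{D \in \mathcal{D}: A(c) \to_{(k,j)} D\}$. Iterating this by induction on $n$ yields the desired bijection for paths, and hence equal row sums.

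To pass from row sums to $\|\cdot\|_\infty$, I take maxima. One direction is immediate: every row sum of $(\mathbf{F}^{\mathcal{A}}(s))^n$ at a vertex $c$ coincides with the row sum of $(\mathbf{F}(s))^n$ at $A(c)$, so $\|(\mathbf{F}^{\mathcal{A}}(s))^n\|_\infty \leq \|(\mathbf{F}(s))^n\|_\infty$. For the reverse inequality I use property \textbf{(e)} of Definition \ref{pm86}, which gives $A(\mathcal{A}_r) = \mathcal{D}_r$ for every $r \in \mathbb{N}_0$ and hence $A(\mathcal{A}) = \mathcal{D}$; thus for every $D \in \mathcal{D}$ there is some $c \in \mathcal{A}$ with $A(c) = D$, so every row sum of $(\mathbf{F}(s))^n$ is realized as a row sum of $(\mathbf{F}^{\mathcal{A}}(s))^n$. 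Therefore $\|(\mathbf{F}^{\mathcal{A}}(s))^n\|_\infty = \|(\mathbf{F}(s))^n\|_\infty$ for all $n$, taking $n$-th roots and letting $n \to \infty$ gives $\varrho(\mathbf{F}^{\mathcal{A}}(s)) = \varrho(\mathbf{F}(s))$, and applying this to $\mathcal{A}$ and $\mathcal{A}'$ yields the lemma.

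The one subtle point is the inductive lift for the path bijection: given a partial lift $c = c_0 \to_{(k_1,j_1)} c_1 \to \cdots \to_{(k_i,j_i)} c_i$ in $\mathcal{A}$ of the first $i$ steps of a path $A(c) \to_{(k_1,j_1)} D_1 \to \cdots$ in $\mathcal{D}$, I need to invoke \textbf{(d)} at the new vertex $c_i$ to extend the lift by one step, which requires $c_i \in \mathcal{A}$. This is exactly what the inductive hypothesis guarantees, and property \textbf{(a)} ensures the word-length truncations stay inside $\mathcal{A}$ throughout. Once this bookkeeping is spelled out, the remainder of the argument is routine matrix algebra.
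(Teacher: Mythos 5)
Your proof is correct and is essentially the paper's own approach: the paper states Lemma \ref{pm84} without a written proof, calling it a ``straightforward consequence of Definition \ref{pm86}'', and your argument --- the label-preserving bijection of length-$n$ paths built by iterating property \textbf{(d)}, giving equal row sums of $(\mathbf{F}^{\mathcal{A}}(s))^n$ at $c$ and of $(\mathbf{F}(s))^n$ at $A(c)$, together with the surjectivity $A(\mathcal{A})=\mathcal{D}$ from property \textbf{(e)} to equate the suprema of row sums and hence the norms and spectral radii --- is exactly the intended justification, applied to $\mathcal{A}$ and $\mathcal{A}'$ in turn. No gaps; the only cosmetic point is that for possibly infinite index sets the $l^{\infty}$-norm is a supremum of row sums rather than a maximum, which does not affect your argument.
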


\begin{remark}\label{pm73}
  Let $(\mathcal{A},\to)$ be a variant of the Markov diagram, and fix $\varepsilon>0$. Assume that there exists a finite irreducible $\mathcal{C}\subset \mathcal{D}$ such that $\varrho (\mathbf{F}_{\mathcal{C}}(s)) > \varrho (\mathbf{F}(s))-\varepsilon$. Then obviously $\exists r: \mathcal{C}\subset \mathcal{D}_r$, and by part $v)$ of Definition \ref{pm86}, 
  \[
    \exists \widetilde{\mathcal{C}}\subset \mathcal{A}_r \mbox{ such that }
    \varrho (\mathbf{F}^{\mathcal{A}}_{\widetilde{\mathcal{C}}}(s)) > \varrho (\mathbf{F}(s))-\varepsilon.
  \]
\end{remark}

Raith proved \cite[Lemma~6]{raith1992continuity} that if two systems are close to each other, then the initial parts of their Markov diagrams coincide. Our systems has a more complicated structure due to the possible overlappings, but the construction given in his proof works here also.

Let $\mathcal{Y}=\{Y_1,\dots,Y_N\}$ be a finite partition. For $1<i<N$, we say that $C\subset\mathbb{R}$ is $\mathcal{Y}$-close to $Y_i$ if $C\subset Y_{i-1}\cup Y_i\cup Y_{i+1}$.

\begin{lemma}[{\cite[Lemma~6]{raith1992continuity}}]\label{pm85}
  Let $\mathcal{F}$ be a CLPIFS, and let $\mathcal{Y}$ be a finite refinement of its monotonicity partition. Then for every $r\in\mathbb{N}$ there exists a $\delta>0$ such that for every CPLIFS $\widehat{\mathcal{F}}$ which is $\delta$-close to $\mathcal{F}$ and for every finite refinement $\widehat{\mathcal{Y}}$ of $\widehat{\mathcal{Z}_0}$ which is $\delta$-close to $\mathcal{Y}$, there exists a variant $(\mathcal{A},\to)$ of the Markov diagram of $\mathcal{F}$ with respect to $\mathcal{Y}$ and a variant $(\widehat{\mathcal{A}},\to)$ of the Markov diagram of $\widehat{\mathcal{F}}$ with respect to $\widehat{\mathcal{Y}}$ with the following properties.
  \begin{enumerate}[{\bf (i)}]
    \item $\widehat{\mathcal{A}}_r$ can be written as a disjoint union $\mathcal{B}_0\cup \mathcal{B}_1\cup \mathcal{B}_2$, such that $\mathcal{B}_1\cup \mathcal{B}_2$ and $\mathcal{B}_2$ are closed in $\widehat{\mathcal{A}}_r$ and $\widehat{\mathcal{A}}_0\subset \mathcal{B}_0$ ($\mathcal{B}_1$ and $\mathcal{B}_2$ might be empty).
    \item Every $c\in\widehat{\mathcal{A}}_r$ has at most two successors in $\mathcal{B}_1\cup \mathcal{B}_2$ by a given branch.
    \item There exists a bijective function $\phi:\mathcal{A}_r\to \mathcal{B}_0$, and there exists a function $\psi:\mathcal{B}_2\to \mathcal{G}$.
    \item For $c,d\in \mathcal{A}_r$ and $k\in[m],j\in[l(k)]$ the property $c\to_{(k,j)}d$ in $\mathcal{A}$ is equivalent to $\phi(c)\to_{(k,j)}\phi(d)$ in $\widehat{\mathcal{A}}$. For $c,d\in \mathcal{B}_2$ the property $c\to_{(k,j)}d$ in $\widehat{\mathcal{A}}$ implies $\psi(c)\to_{(k,j)}\psi(d)$ in $\mathcal{G}$.
    \item $A(c)=Y_j$ for $c\in \mathcal{A}_0$ implies $\phi(c)\in \widehat{\mathcal{A}}_0$ and $\widehat{A}(\phi(c))=\widehat{Y}_j$.
    \item $c\in \mathcal{A}_r$ and $A(c)\subset A(d)$ for a $d\in \mathcal{A}_0$ imply $\widehat{A}(\phi(c))\subset \widehat{A}(\phi(d))$. $c\in \mathcal{B}_2$ and $\psi(c)\in A(d)$ for a $d\in \mathcal{A}_0$ imply $\widehat{A}(c)$ is $\widehat{Y}$-close to $\widehat{A}(\phi(d))$.
    \item Let $\mathcal{P}$ be the set of all paths $c_0\to c_1\to\dots \to c_r$ of length $r$ in $\widehat{A}_r$ with $c_0\in \widehat{\mathcal{A}}_0$, and set $\mathcal{N}:=\{(d_0,\dots ,d_r): d_j\in \mathcal{A}_r\cup \mathcal{G} \mbox{ for }  j\in\{0,\dots ,r\}\}$. Then there exists a function $\chi: \mathcal{P}\to \mathcal{N}$.
    \item  Let $c_0\to c_1\to\dots \to c_r\in \mathcal{P}, \chi(c_0\to c_1\to\dots \to c_r)=(d_0,\dots ,d_r)$ and $j\in\{0,1\dots ,r\}$. $c_j\in \mathcal{B}_2$ is equivalent to $d_j\in \mathcal{G}$, and we have then $\psi(c_j)=d_j$. $c_j\in \mathcal{B}_0$ implies $\phi(d_j)=c_j$. $c_j\in \mathcal{B}_0\cup \mathcal{B}_1$ implies $\widehat{A}(c_j)$ is $\widehat{\mathcal{Y}}$-close to $\widehat{A}(\phi(d_j))$. Moreover, $c_j\in \mathcal{B}_0\cup \mathcal{B}_1$ implies $d_{j-1}\to d_j$ in $\mathcal{A}$ for $j\geq 1$.
    \item For a fixed $c_0\in\widehat{\mathcal{A}}_0$, a fixed $(d_0,\dots ,d_r)\in \mathcal{N}$ and for a fixed sequence of branches $(k_1,j_1),\dots,(k_r,j_r)$ there are at most $2r+1$ different paths $\mathbf{c}:=c_0\to_{(k_0,j_0)} \cdots\to_{(k_r,j_r)} c_r\in \mathcal{P}$ such that $\chi(\mathbf{c})=(d_0,\dots ,d_r)$. Further, for $q\in\{0,1,\dots ,r-1\}$ and fixed $d_0,d_1,\dots ,d_q\in \mathcal{A}_r\cap \mathcal{G}$ there are at most $4$ different $a\in \mathcal{G}$ such that there exists $d_{q+2},d_{q+3},\dots ,d_r\in \mathcal{A}\cup \mathcal{G}$ with $(d_0,d_1,\dots ,d_q,a,d_{q+2},d_{q+3},\dots ,d_r)\in\chi(\mathcal{P})$.
  \end{enumerate}
\end{lemma}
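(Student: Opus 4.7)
The plan is to follow the inductive construction of Raith~\cite[Lemma~6]{raith1992continuity}, with the extra bookkeeping needed because in our setting each element of $\mathcal{M}$ carries both a subinterval of $\mathcal{I}$ and the branch sequence that produces it. The base case $r=0$ is immediate: $\mathcal{Y}$ and $\widehat{\mathcal{Y}}$ are $\delta$-close partitions of the same cardinality, so a compatible labelling yields a bijection $\phi:\mathcal{A}_0\to \widehat{\mathcal{A}}_0=:\mathcal{B}_0$ with $\mathcal{B}_1=\mathcal{B}_2=\emptyset$, and property (v) is automatic.

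For the inductive step, suppose the decomposition has been produced up to level $r-1$. For each $c\in\widehat{\mathcal{A}}_{r-1}$ and branch $(k,j)$, the successors of $c$ are the non-empty intersections of $\widehat{f}_{k,j}^{-1}(\widehat{A}(c))$ with the elements of $\widehat{\mathcal{Y}}$. The $\delta$-closeness of $\mathcal{F}$ and $\widehat{\mathcal{F}}$ from items (c)--(d) of Definition~\ref{pm83}, combined with the closeness of the partitions, implies that $\widehat{f}_{k,j}^{-1}(\widehat{A}(c))$ and $f_{k,j}^{-1}(A(\phi^{-1}(c)))$ differ by an affine error of size $O(\delta)$. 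Choosing $\delta$ small uniformly over the finite set $\mathcal{A}_r$, each successor in $\widehat{\mathcal{A}}_r$ either matches a genuine successor in $\mathcal{A}_r$ (placed in $\mathcal{B}_0$), or is a thin extra sliver near one endpoint of $A(\phi^{-1}(c))$; such a sliver is either bounded by the $\mathbb{R}_{\widehat{\mathcal{Y}}}$-image of a preimage of a critical endpoint, hence an element of $\mathcal{G}$ (placed in $\mathcal{B}_2$), or is a middle fragment lying strictly between partition boundaries (placed in $\mathcal{B}_1$). Properties (i) and (ii) follow from this trichotomy, since each of the two endpoints of $A(\phi^{-1}(c))$ can create at most one extra sliver per branch.

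With the decomposition in hand I extend $\phi$ to level $r$ by matching each successor to its $\mathcal{B}_0$-counterpart, which gives (iv)--(vi); $\psi$ on $\mathcal{B}_2$ is defined by reading off the $\mathcal{G}$-endpoint that produced the sliver, and the induced edge relation in (iv) is inherited from the construction of $\mathcal{G}$. The map $\chi:\mathcal{P}\to\mathcal{N}$ of (vii) lifts each path node-by-node: one takes $\phi^{-1}$ of the node when it lies in $\mathcal{B}_0$, $\psi$ of it when it lies in $\mathcal{B}_2$, and (using (vi)) the enveloping $\mathcal{A}_r$-element when it lies in $\mathcal{B}_1$. Property (viii) is then a definition chase.

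The main obstacle I expect is the combinatorial count (ix). Two distinct paths in $\mathcal{P}$ can share the same $\chi$-image only because nodes in $\mathcal{B}_1$ carry an ambiguity as to which of the (at most two) endpoints of the parent $\mathcal{A}_r$-cylinder they envelop; bounding the number of positions where this ambiguity can occur gives the $2r+1$ count. For the bound of $4$ on the number of $a\in\mathcal{G}$ filling position $q+1$ for a fixed prefix $d_0,\dots,d_q$: such an $a$ must arise from one of the two endpoints of $A(d_q)$, and each endpoint admits a $\zeta$-ambiguity (the $x^-$ vs.\ $x^+$ choice in $\mathbb{R}_{\widehat{\mathcal{Y}}}$), for $2\times 2=4$ in total. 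Once the inductive construction is set up, (ix) reduces to this enumeration of the slivers that can appear at each level.
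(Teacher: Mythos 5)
Your proposal follows exactly the route the paper itself takes: the paper gives no self-contained proof of this lemma, but defers to Raith's constructive induction in \cite[Lemma~6]{raith1992continuity} and notes only the adaptation needed for the multi-valued $T$ (tracking branch labels in (ii) and (ix)), and your sketch reproduces that construction in the way the paper's own explanatory remarks describe it --- $\mathcal{B}_0$ as the bijective copy of $\mathcal{A}_r$, $\mathcal{B}_2$ as the slivers generated by critical-point orbits and tracked by $(\mathcal{G},\to)$, $\mathcal{B}_1$ as those generated by the non-critical endpoints in $E_{\mathcal{Y}}\setminus\mathcal{K}_{\mathcal{Y}}$ (your ``middle fragment'' phrasing should really be ``bounded by preimages of non-critical partition endpoints,'' but this is cosmetic), with the $2r+1$ count coming from the position at which a path leaves $\mathcal{B}_0$ combined with (ii), and the bound $4$ from the two endpoints times the $x^{-}/x^{+}$ ambiguity resolved by $\zeta$. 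Since the paper explicitly omits the long technical construction as ``nothing new,'' your outline is at the same level of rigour and takes essentially the same approach.
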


In \cite{raith1992continuity} a constructive proof is given. The same construction also works in our case. Note that in contrast with \cite{raith1992continuity}, our $T$ is multi-valued, this is why we have to specify which branches we use in parts $(ii)$ and $(ix)$. As this construction is very long, technical and presents nothing new we omit to repeat it here.

Instead, we give a general explanation on the statements to help building intuition on the structure of the Markocv diagrams.
Using the lemma's notations, it states that for any big integer $r\in\mathbb{N}$ we can find a small number $\delta>0$ such that the $r$-th level of the variants $\mathcal{A}_r$ and $\widehat{\mathcal{A}}_r$ have similar structures, given that $\mathcal{F}$ and $\widehat{\mathcal{F}}$ are $\delta$-close to each other.

Namely, $\widehat{A}_r$ contains a bijective copy of $\mathcal{A}_r$ which we denote by $\mathcal{B}_0$. According to (iv), there is a one-to-one correspondence between the edges of $(\mathcal{A}_r,\to)$ and $(\mathcal{B}_0,\to)$. As (v) and (vi) implies, the same relation is present between the intervals corresponding to the elements of $\mathcal{A}_r$ and $\mathcal{B}_0$. That is, a big initial part of the diagram of $\mathcal{F}$ is contained in the diagram of $\widehat{\mathcal{F}}$.

Assume that $\mathcal{F}$ has an inner critical point with a periodic orbit. 
Since perturbing the parameters of the original IFS may result in destroying this periodic orbit, we cannot expect $\widehat{\mathcal{F}}$ to share the same Markov diagram with $\mathcal{F}$, which is a representation of the orbit structure of these dynamical systems. In this case $\widehat{\mathcal{A}}_r\setminus \mathcal{B}_0$ is non-empty, this is the set we call $\mathcal{B}_1\cup \mathcal{B}_2$. 

Let $d\in\widehat{\mathcal{A}}_r\setminus \mathcal{B}_0$ be a code which appears in the variant $\widehat{\mathcal{A}}_r$ but isn't included in $\mathcal{A}_r$. It must code the successor of a point that is mapped onto an $a_l\in E_{\mathcal{Y}}$ by the expansive mapping $T$, for some $l\in[2N]$. Critical points have assigned codes, $a_l$ is already coded by $(l,\emptyset)$, that is why $d$ is not in $\mathcal{A}_r$.

If $a\in \mathcal{K}_{\mathcal{Y}}$, then $d\in \mathcal{B}_2$. By the definition of the orbit graph of critical points $(\mathcal{G},\to)$, we can relate the elements of $\mathcal{B}_2$ to the nodes of $\mathcal{G}$. It also suggests that $(\mathcal{B}_2,\to)$ is a closed subgraph of $(\widehat{\mathcal{A}}_r,\to)$. 

If $a\in E_{\mathcal{Y}}\setminus\mathcal{K}_{\mathcal{Y}}$, then we say $d\in \mathcal{B}_1$. These points move along with the endpoint of an element of the partition $\mathcal{Y}$. Since this endpoint is not a critical point, we can always find an element of $\mathcal{B}_0$ which is mapped onto the same monotonicity interval by $\widehat{A}$, according to the last two lines of (viii).

\subsection{Continuity of the natural pressure}

We prove here that the natural dimension of a limit-irreducible CPLIFS is always  lower semi-continuous, and we give a condition for its upper semi-continuity as well. The proofs follow the lines of the proof of \cite[Theorem~1]{raith1992continuity} and \cite[Theorem~2]{raith1992continuity} respectively.

\begin{theorem}\label{pm80}
  Let $\mathcal{F}$ be a limit-irreducible CPLIFS with attractor $\Lambda$, and let $s\in(0,\dim_{\rm H}\Lambda)$. Then for every $\varepsilon>0$ there exists a $\delta>0$ such that for all CPLIFS $\widehat{\mathcal{F}}$ which is $\delta$-close to $\mathcal{F}$
  \[
    \Phi(s)-\varepsilon < \widehat{\Phi}(s),
  \]
  where $\Phi(s)$ and $\widehat{\Phi}(s)$ stand for the natural pressure function of $\mathcal{F}$ and $\widehat{\mathcal{F}}$ respectively.
\end{theorem}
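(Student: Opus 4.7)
The plan is to combine the spectral-radius approximation from Proposition \ref{md81} with the structural stability of the initial portion of the Markov diagram (Lemma \ref{pm85}) and a finite-dimensional matrix-perturbation argument. Since $\mathcal{F}$ is limit-irreducible and $s \in (0,\dim_{\rm H}\Lambda)$, Corollary \ref{pm71} gives $\Phi(s)=\log\varrho(\mathbf{F}(s))$. First I would invoke Proposition \ref{md81} to obtain a finite irreducible subgraph $\mathcal{C}\subset\mathcal{D}$ of the limit-irreducible Markov diagram satisfying
\[
\log\varrho(\mathbf{F}_{\mathcal{C}}(s)) > \Phi(s)-\tfrac{\varepsilon}{3}.
\]
(Irreducibility may be assumed thanks to Lemma \ref{md91} and the fact that any finite subgraph of an irreducible diagram approximating its spectral radius is contained in a finite irreducible subgraph.) By Remark \ref{pm73}, for some $r\in\mathbb{N}$ there is a variant $(\mathcal{A},\to)$ of the Markov diagram of $\mathcal{F}$ with respect to $\mathcal{Z}_0$ containing a finite irreducible $\widetilde{\mathcal{C}}\subset\mathcal{A}_r$ with $\varrho(\mathbf{F}^{\mathcal{A}}_{\widetilde{\mathcal{C}}}(s)) = \varrho(\mathbf{F}_{\mathcal{C}}(s))$.

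Next I would apply Lemma \ref{pm85} to this $r$: it provides a threshold $\delta_0>0$ so that for every CPLIFS $\widehat{\mathcal{F}}$ that is $\delta_0$-close to $\mathcal{F}$, there exists a variant $(\widehat{\mathcal{A}},\to)$ of the Markov diagram of $\widehat{\mathcal{F}}$ together with a bijection $\phi:\mathcal{A}_r\to\mathcal{B}_0\subset\widehat{\mathcal{A}}_r$ preserving the labelled edge structure (property (iv) of Lemma \ref{pm85}). In particular, $\phi(\widetilde{\mathcal{C}})$ is a finite irreducible subgraph of $\widehat{\mathcal{A}}$, and the matrices $\mathbf{F}^{\mathcal{A}}_{\widetilde{\mathcal{C}}}(s)$ and $\widehat{\mathbf{F}}^{\widehat{\mathcal{A}}}_{\phi(\widetilde{\mathcal{C}})}(s)$ have identical zero/non-zero patterns; corresponding non-zero entries are each of the form $|f'_{k,j}|^s$ resp.\ $|\widehat{f}'_{k,j}|^s$ for matching branches.

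By condition (c) of Definition \ref{pm83}, $\bigl||f'_{k,j}|^s -|\widehat{f}'_{k,j}|^s\bigr|$ can be made as small as we wish by shrinking $\delta$. Because $\widetilde{\mathcal{C}}$ is a fixed finite set, continuity of the spectral radius on the finite-dimensional cone of non-negative matrices with a fixed support pattern (Perron--Frobenius) gives a $\delta\in(0,\delta_0)$ such that
\[
\log\varrho\bigl(\widehat{\mathbf{F}}^{\widehat{\mathcal{A}}}_{\phi(\widetilde{\mathcal{C}})}(s)\bigr) > \log\varrho\bigl(\mathbf{F}^{\mathcal{A}}_{\widetilde{\mathcal{C}}}(s)\bigr) - \tfrac{\varepsilon}{3}
\]
for every $\widehat{\mathcal{F}}$ that is $\delta$-close to $\mathcal{F}$.

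Finally, since $\phi(\widetilde{\mathcal{C}})$ is finite and irreducible, Lemma \ref{md84} applied in the variant $\widehat{\mathcal{A}}$ yields $\widehat{\Phi}^{\widehat{\mathcal{A}}}_{\phi(\widetilde{\mathcal{C}})}(s)=\log\varrho(\widehat{\mathbf{F}}^{\widehat{\mathcal{A}}}_{\phi(\widetilde{\mathcal{C}})}(s))$; since paths in $\phi(\widetilde{\mathcal{C}})$ correspond to a subset of the admissible sequences of $\widehat{\mathcal{F}}$ (using Remark \ref{md35} and Lemma \ref{pm84} to pass between the diagram and the variant), restriction yields $\widehat{\Phi}(s)\geq\widehat{\Phi}^{\widehat{\mathcal{A}}}_{\phi(\widetilde{\mathcal{C}})}(s)$. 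Chaining the three bounds gives $\widehat{\Phi}(s)>\Phi(s)-\varepsilon$, as required. The main obstacle is the bookkeeping needed to translate between the original Markov diagram, its variant, and the variant of $\widehat{\mathcal{F}}$ while preserving both the labelled-edge structure and the pressure/spectral-radius identities; once one has internalised Lemmas \ref{pm84} and \ref{pm85}, the actual analytic content reduces to elementary continuity of the spectral radius of a finite non-negative matrix whose support pattern is held fixed.
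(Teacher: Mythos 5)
Your proposal is correct and follows essentially the same route as the paper's proof: approximate $\varrho(\mathbf{F}(s))$ by a finite subsystem via Proposition \ref{md81}, transfer it into a variant $\mathcal{A}_r$ via Remark \ref{pm73}, copy it into $\widehat{\mathcal{A}}_r$ through the label-preserving bijection $\phi$ of Lemma \ref{pm85}, compare the two finite matrices entrywise, and return to pressures via Lemma \ref{md84} and monotonicity of $\widehat{\Phi}$. The only differences are cosmetic: the paper makes your spectral-radius continuity step quantitative, choosing $\delta<\frac{\varepsilon}{2sK}$ so that $\widehat{\mathbf{F}}^{\widehat{\mathcal{A}}}_{\phi(\mathcal{C})}(s)\geq e^{-\varepsilon/2}\,\mathbf{F}^{\mathcal{A}}_{\mathcal{C}}(s)$ entrywise (with $K$ the maximal number of overlapping branches), and it applies Lemma \ref{pm85} first and Remark \ref{pm73} to the variant it produces, rather than fixing a variant in advance.
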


\begin{proof}
  Let $\mathcal{Y}$ be the limit-irreducible partition of $\mathcal{F}$, and let $(\mathcal{D},\to)$ be the Markov diagram of $\mathcal{F}$ with respect to $\mathcal{Y}$. 

  Fix $\varepsilon>0$ and write $K$ for the maximum number of overlapping branches. By Proposition \ref{md81}, there exists a $\mathcal{C}\subset \mathcal{D}$ finite subset such that
  \begin{equation}\label{pm72}
    \varrho(\mathbf{F}(s))-\frac{\varepsilon}{2} \leq 
    \varrho(\mathbf{F}_{\mathcal{C}}(s)),
  \end{equation}
  where $\mathbf{F}(s)$ is the matrix associated to the Markov diagram. As $\mathcal{C}$ is finite it must be contained in $\mathcal{D}_r$ for some $r\geq 1$. By Lemma \ref{pm85}, there exists a $\delta\in(0,\frac{\varepsilon}{2sK})$ such that the conclusions of Lemma \ref{pm85} are true with respect to $r$ if $\widehat{\mathcal{F}}$ is $\delta$-close to $\mathcal{F}$. 

  Let $\widehat{\mathcal{Z}}_0$ be the monotonicity partition of $\widehat{\mathcal{F}}$ and $\widehat{\mathcal{Y}}$ be a finite refinement of $\widehat{\mathcal{Z}}_0$ which is $\delta$-close to $\mathcal{Y}$. Let $(\mathcal{A},\to), (\widehat{\mathcal{A}},\to)$ be the variants of the Markov diagram of $\mathcal{F}, \widehat{\mathcal{F}}$ with repsect to $\mathcal{Y}$ and $\widehat{\mathcal{Y}}$ respectively, obtained by applying Lemma \ref{pm85}.  

  By \eqref{pm72} and Remark \ref{pm73}, there exists a $\mathcal{C}\subset \mathcal{A}_r$ such that 
  \begin{equation}\label{pm70}
    \Phi(s)-\varepsilon = \log\varrho (\mathbf{F}(s))-\varepsilon
    \leq \log\varrho (\mathbf{F}^{\mathcal{A}}_{\mathcal{C}}(s))-\frac{\varepsilon}{2},
  \end{equation}
  where the first equality follows from Corollary \ref{pm71}. 
  
  We write $\rho$ and $\widehat{\rho}$ for the slopes of the functions of $\mathcal{F}$ and $\widehat{\mathcal{F}}$ respectively. 
  Consider the matrix $\mathbf{F}^{\widehat{\mathcal{A}}}_{\phi(\mathcal{C})}(s)$, where $\phi: \mathcal{A}_r\to \widehat{\mathcal{A}}_r$ is the mapping described in Lemma \ref{pm85}. As $\mathcal{F}$ and $\widehat{\mathcal{F}}$ are $\delta$-close, for any $k\in[m]$ and $j\in[l(k)]$ we have 
  \[
    \log \rho_{k,j}-\delta\leq \log\widehat{\rho}_{k,j}.
  \] 
  Using that every entry in $\mathbf{F}^{\mathcal{A}}_{\mathcal{C}}(s)$ is a sum of at most $K$ many elements of the form $\rho^{s}_{k,j}$, by parts $(iv), (v)$ and $(vi)$ of Lemma \ref{pm85} we have the following relation between the elements of $\widehat{\mathbf{F}}^{\widehat{\mathcal{A}}}_{\phi(\mathcal{C})}(s)$ and $\mathbf{F}^{\mathcal{A}}_{\mathcal{C}}(s)$.
  \begin{equation}\label{pm69}
    \forall c,d\in \mathcal{C}:\quad  
    \left[\widehat{\mathbf{F}}^{\widehat{\mathcal{A}}}_{\phi(\mathcal{C})}(s)
    \right]_{\phi(c),\phi(d)} \geq 
    \exp^{-\frac{\varepsilon}{2}}\left[\mathbf{F}^{\mathcal{A}}_{\mathcal{C}}(s) 
    \right]_{c,d},
  \end{equation} 
  since $\delta$ is smaller than $\frac{\varepsilon}{2sK}$.
  By \eqref{pm70}, \eqref{pm69} and Lemma \ref{md84}, we conclude the proof 
  \[
    \Phi(s)-\varepsilon\leq \log\varrho (\mathbf{F}^{\mathcal{A}}_{\mathcal{C}}(s))-\frac{\varepsilon}{2}\leq \log\varrho (\widehat{\mathbf{F}}^{\widehat{\mathcal{A}}}_{\phi(\mathcal{C})}(s)) = \widehat{\Phi}_{\phi(\mathcal{C})}(s)\leq \widehat{\Phi}(s).
  \]

\end{proof}

\begin{theorem}\label{pm79}
  Let $\mathcal{F}$ be a limit-irreducible CPLIFS with attractor $\Lambda$, and let $\mathbf{G}(s)$ be the matrix defined in \eqref{pm92}. Fix an arbitrary $s\in(0,\dim_{\rm H}\Lambda)$. Then for every $\varepsilon>0$ there exists a $\delta>0$ such that for all CPLIFS $\widehat{\mathcal{F}}$ which is $\delta$-close to $\mathcal{F}$
  \[
    \widehat{\Phi}(s) < \max\{\Phi(s), 
    \log \varrho(\mathbf{G}(s))\} +\varepsilon,
  \]
  where $\Phi(s)$ and $\widehat{\Phi}(s)$ stand for the natural pressure function of $\mathcal{F}$ and $\widehat{\mathcal{F}}$ respectively.
\end{theorem}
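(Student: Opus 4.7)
The plan is to follow the general strategy of Theorem \ref{pm80}, but because a path in $(\widehat{\mathcal{A}},\to)$ can now visit vertices in $\mathcal{B}_1\cup\mathcal{B}_2$ that have no counterpart in $\mathcal{A}_r$, we must separately control the contribution of the portion of the path that leaves $\mathcal{B}_0$. The role of the matrix $\mathbf{G}(s)$ is precisely to bound the weight of excursions into $\mathcal{B}_2$, since by part~(iv) and (viii) of Lemma~\ref{pm85} such excursions project under $\psi$ to paths in the orbit graph $(\mathcal{G},\to)$. Elements of $\mathcal{B}_1$ will be absorbed into the $\mathcal{B}_0$ estimate because by the last line of (viii) each $\widehat{A}(c)$ for $c\in\mathcal{B}_1$ is $\widehat{\mathcal{Y}}$-close to some $\widehat{A}(\phi(d))$ with $d\in\mathcal{A}_r$.

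Concretely, I would fix $\varepsilon>0$ and choose $r$ large enough that $\tfrac{1}{r}\log(2r+1)<\tfrac{\varepsilon}{4}$ and $\tfrac{1}{r}\log 4<\tfrac{\varepsilon}{4}$; these quantities arise from the combinatorial bounds in property~(ix) of Lemma~\ref{pm85}. Then I would apply Lemma~\ref{pm85} with this $r$ to obtain a $\delta\in\bigl(0,\tfrac{\varepsilon}{4sK}\bigr)$, where $K$ again denotes the maximal overlap multiplicity, together with variants $(\mathcal{A},\to)$ and $(\widehat{\mathcal{A}},\to)$ and the associated maps $\phi,\psi,\chi$ and decomposition $\widehat{\mathcal{A}}_r=\mathcal{B}_0\cup\mathcal{B}_1\cup\mathcal{B}_2$.

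The next step is to estimate the partition sum
\[
  \widehat{Z}_n(s):=\sum_{\mathbf{k}}|\widehat{I}_{\mathbf{k}}|^{s}
\]
defining $\widehat{\Phi}(s)$ by summing over paths of length $nr$ in $(\widehat{\mathcal{A}},\to)$ starting in $\widehat{\mathcal{A}}_0$. I would split every such path into $n$ consecutive blocks of length $r$ and apply $\chi$ to each block to obtain a sequence $(d_0,\dots,d_r)\in(\mathcal{A}_r\cup\mathcal{G})^{r+1}$. By properties~(vii)--(viii) an entry $d_j$ lies in $\mathcal{A}_r$ precisely when $c_j\in\mathcal{B}_0\cup\mathcal{B}_1$ (in which case the weight of the incoming edge agrees, up to the factor $e^{\varepsilon/2}$ coming from $\delta$-closeness of the slopes, with a matching entry of $\mathbf{F}^{\mathcal{A}}(s)$), and $d_j\in\mathcal{G}$ precisely when $c_j\in\mathcal{B}_2$ (in which case the incoming edge corresponds to an entry of $\mathbf{G}(s)$). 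Hence the weight of each block is bounded by $e^{r\varepsilon/2}$ times a product of $r$ entries taken from the block matrix
\[
  \mathbf{H}(s):=\begin{pmatrix}\mathbf{F}^{\mathcal{A}}(s) & * \\ 0 & \mathbf{G}(s)\end{pmatrix},
\]
where the $*$-block records the (one-way) transitions from $\mathcal{B}_0\cup\mathcal{B}_1$ into $\mathcal{B}_2$ that occur when a critical point is perturbed into its orbit; the zero block below records that by Lemma~\ref{pm85}~(i), $\mathcal{B}_2$ is closed in $\widehat{\mathcal{A}}_r$. Because $\mathbf{H}(s)$ is block upper-triangular, $\varrho(\mathbf{H}(s))=\max\{\varrho(\mathbf{F}^{\mathcal{A}}(s)),\varrho(\mathbf{G}(s))\}$, which by Lemma~\ref{pm84} and Corollary~\ref{pm71} equals $\max\{e^{\Phi(s)},\varrho(\mathbf{G}(s))\}$.

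To finish, I would sum over all admissible block sequences and apply property~(ix) of Lemma~\ref{pm85}: for a fixed image $(d_0,\dots,d_r)\in\mathcal{N}$ and a fixed sequence of branches, at most $2r+1$ different paths of length $r$ map to it, and fixing an initial segment of length $q$ the next vertex in $\mathcal{G}$ has at most $4$ choices. This yields
\[
  \widehat{Z}_{nr}(s)\le C\,(2r+1)^n\,e^{nr\varepsilon/2}\,\varrho(\mathbf{H}(s))^{nr},
\]
for a constant $C$ depending only on $|\widehat{\mathcal{A}}_0|$. Taking $\tfrac{1}{nr}\log$ and letting $n\to\infty$ gives
\[
  \widehat{\Phi}(s)\le \max\{\Phi(s),\log\varrho(\mathbf{G}(s))\}+\tfrac{\varepsilon}{2}+\tfrac{1}{r}\log(2r+1)<\max\{\Phi(s),\log\varrho(\mathbf{G}(s))\}+\varepsilon,
\]
by the choice of $r$. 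The main technical obstacle is the bookkeeping inside a block: one has to show that the transitions $\mathcal{B}_0\cup\mathcal{B}_1\to\mathcal{B}_2$ are correctly captured as entries of the off-diagonal $*$-block (this uses (viii) of Lemma~\ref{pm85}), and that no $\mathcal{B}_2\to\mathcal{B}_0$ edges exist (this uses the closedness statement in (i)); once these are in place the block-triangular spectral identity ensures the desired $\max$, rather than a sum, appears in the final bound.
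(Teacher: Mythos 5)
Your proposal has the right architecture --- the decomposition $\widehat{\mathcal{A}}_r=\mathcal{B}_0\cup\mathcal{B}_1\cup\mathcal{B}_2$, the one-way transition structure into $\mathcal{B}_2$ producing a $\max$ rather than a sum, and the role of $\mathbf{G}(s)$ as the bound on $\mathcal{B}_2$-excursions are all exactly the mechanism of the paper's proof --- but there is a genuine gap at the decisive reduction step. You estimate the cylinder sums $\widehat{Z}_{nr}(s)$ by cutting paths of length $nr$ into $n$ consecutive blocks of length $r$ and applying $\chi$ to each block. The map $\chi$ from Lemma~\ref{pm85}~(vii) is only defined on $\mathcal{P}$, the set of paths of length $r$ \emph{in} $\widehat{\mathcal{A}}_r$ \emph{starting in} $\widehat{\mathcal{A}}_0$; after the first block your path sits at a vertex of level up to $r$, and after the $j$-th block at level up to $jr$, so the later blocks neither start in $\widehat{\mathcal{A}}_0$ nor stay in $\widehat{\mathcal{A}}_r$, and Lemma~\ref{pm85} provides no correspondence with $\mathcal{A}$ or $\mathcal{G}$ there at all. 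The paper never touches long paths: since the $l^\infty$-norm is submultiplicative, Gelfand's formula gives $\varrho(\widehat{\mathbf{F}}(s))\leq \Vert \widehat{\mathbf{F}}(s)^r\Vert_{\infty}^{1/r}$ for the \emph{single} fixed $r$, the row supremum is reduced to $c\in\widehat{\mathcal{A}}_0$, and the pressure is recovered at the very end from $\widehat{\Phi}(s)\leq\log\varrho(\widehat{\mathbf{F}}(s))$ (Lemma~\ref{md84}) rather than from cylinder counts. Each length-$r$ path is then split \emph{once}, at the last index $q$ with $c_q\in\mathcal{B}_0\cup\mathcal{B}_1$ (legitimate because $\mathcal{B}_2$ is closed in $\widehat{\mathcal{A}}_r$), yielding an $\mathcal{A}_r$-prefix and a $\mathcal{G}$-suffix --- your block-triangular picture, but realized without ever iterating $\chi$.

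Two quantitative points would also need repair. First, you choose $r$ only against the combinatorial factors $(2r+1)$ and $4$, but the binding constraint is different: one needs uniform bounds $\Vert\mathbf{F}^{\mathcal{A}}(s)^q\Vert\leq Ce^{-\varepsilon q/2}R^q$ and $\Vert\mathbf{G}(s)^q\Vert\leq Ce^{-\varepsilon q/2}R^q$ for \emph{all} $q$, obtained from Gelfand's formula applied to the unperturbed matrices with $R$ chosen just above $e^{\varepsilon/2}\max\{e^{\Phi(s)},\varrho(\mathbf{G}(s))\}$; the savings $e^{-\varepsilon q/2}$ are precisely what absorb the $e^{\varepsilon s q/2}$ losses caused by $\delta$-closeness of the slopes, and $r$ must then be fixed so that $\sqrt[r]{(r+1)^2C^3}\,R<R_0$ \emph{before} $\delta$ is chosen. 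Your substitute estimate $\widehat{Z}_{nr}(s)\leq C(2r+1)^n e^{nr\varepsilon/2}\varrho(\mathbf{H}(s))^{nr}$ silently assumes $\Vert\mathbf{H}(s)^{nr}\Vert$ is controlled by $\varrho(\mathbf{H}(s))^{nr}$ with a constant that depends on the off-diagonal block, i.e.\ on the perturbed system, which risks circularity since $r$ must precede $\delta$; this is repairable (the off-diagonal block is uniformly bounded by $K\rho_{\max}^s$), but must be addressed. Second, property (ix) bounds the number of $\chi$-preimages only for a \emph{fixed} branch sequence; because overlaps create parallel edges of different weights, the same vertex path can be traversed by many distinct label sequences, and the resulting multiplicity factor $K^{r-1}$ --- which the paper inserts explicitly and discusses after the estimate of $H_c(q)$ --- is absent from your final count.
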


The proof is analogous to the proof of \cite[Theorem~2]{raith1992continuity}.

\begin{proof}
    Let $\mathcal{Y}$ be the limit-irreducible partition of $\mathcal{F}$, and let $(\mathcal{D},\to)$ be the Markov diagram of $\mathcal{F}$ with respect to $\mathcal{Y}$. 

    Fix $\varepsilon>0$ and write $K$ for the maximum number of overlapping branches. We define the value 
    \[
      R_0:= \exp\left( \max\left\{
        \Phi(s), \log\varrho(\mathbf{G}(s))
      \right\} +\varepsilon\right).  
    \]
    As $R_0>e^{\frac{\varepsilon}{2}}\max\left\{e^{\Phi(s)},\varrho(\mathbf{G}(s))\right\}$, we can choose an
    \[
      R\in \left(e^{\frac{\varepsilon}{2}}\max\left\{e^{\Phi(s)},\varrho(\mathbf{G}(s))\right\}, R_0\right).
    \]
    It follows that $e^{-\frac{\varepsilon}{2}}R>\varrho(\mathbf{G}(s))$ and $e^{-\frac{\varepsilon}{2}}R>e^{\Phi(s)}$. By Lemma \ref{pm84} and Corollary \ref{pm71}, it also follows that for any $(\mathcal{A},\to)$ variant of the Markov diagram of $\mathcal{F}$ with respect to $\mathcal{Y}$, we have $e^{-\frac{\varepsilon}{2}}R>\varrho (\mathbf{F}^{\mathcal{A}}(s))$.
    
    By Gelfand's formula, for any bounded linear operator $\mathbf{M}$ the spectral radius satisfies $\varrho(\mathbf{M})=\inf_{n\geq 1} \Vert \mathbf{M}^n\Vert^{\frac{1}{n}}$. Thus, there exists a $C\in \mathbb{R}$ such that 
    \begin{equation}\label{pm61}
        \sup_{n\in \mathbb{N}} e^{\frac{\varepsilon}{2}n}R^{-n}
        \Vert \mathbf{G}(s)^n\Vert \leq C, 
    \end{equation}
    and for every $(\mathcal{A},\to)$ variant of the Markov diagram of $\mathcal{F}$ with respect to $\mathcal{Y}$
    \begin{equation}\label{pm60}
        \sup_{n\in \mathbb{N}} e^{\frac{\varepsilon}{2}n}R^{-n}
        \Vert \mathbf{F}^{\mathcal{A}}(s)^n\Vert \leq C.
    \end{equation}
    We may assume that 
    \begin{equation}
        C\geq \max\left\{ 2, 8K^{r-1}e^{\frac{\varepsilon}{2}}R^{-1}
        \rho_{\max}^s\right\},
    \end{equation}
    where $\rho_{\max}=\max_{k\in[m], j\in[l(k)]} \vert\rho_{k,j}\vert$ is the biggest slope of the functions in $\mathcal{F}$ in absolute value. Fix an $r\in \mathbb{N}$ with 
    \begin{equation}
        \sqrt[r]{(r+1)^2C^3}R<R_0.
    \end{equation}

    By Lemma \ref{pm85}, there exists a $\delta\in(0,\frac{\varepsilon}{2})$ such that the conclusions of Lemma \ref{pm85} are true with respect to $r$ if $\widehat{\mathcal{F}}$ is $\delta$-close to $\mathcal{F}$.

    Let $\widehat{\mathcal{Z}}_0$ be the monotonicity partition of $\widehat{\mathcal{F}}$ and $\widehat{\mathcal{Y}}$ be a finite refinement of $\widehat{\mathcal{Z}}_0$ which is $\delta$-close to $\mathcal{Y}$. Let $(\mathcal{A},\to), (\widehat{\mathcal{A}},\to)$ be the variants of the Markov diagram of $\mathcal{F}, \widehat{\mathcal{F}}$ with repsect to $\mathcal{Y}$ and $\widehat{\mathcal{Y}}$ respectively, obtained by applying Lemma \ref{pm85}.

    Like in the proof of Theorem \ref{pm80}, we write $\rho$ and $\widehat{\rho}$ for the slope of the functions of $\mathcal{F}$ and $\widehat{\mathcal{F}}$ respectively. Let $\phi:\mathcal{A}_r\to \widehat{\mathcal{A}}_r$ be the bijective mapping described in Lemma \ref{pm85}. As $\mathcal{F}$ and $\widehat{\mathcal{F}}$ are $\delta$-close, for any $k\in[m]$ and $j\in[l(k)]$ we have 
    \begin{equation}\label{pm59}
      \log\widehat{\rho}_{k,j}\leq\log\rho_{k,j}+\delta\leq\log\rho_{k,j}+\frac{\varepsilon}{2}.
    \end{equation}

    Let $\mathcal{P}_c$ be the set of all paths $c_0\to_{(k_0,j_0)} c_1\to_{(k_1,j_1)} \cdots\to_{(k_{r-1},j_{r-1})} c_r$ of length $r$ in $\widehat{\mathcal{A}}_r$ with $c_0=c$.
    To prove the statement of the theorem, it is enough to give an upper bound on the sum of the weights of paths of length $r$ in $\widehat{\mathcal{A}}_r$ starting at the same node, since
    \begin{gather}\label{pm58}
      \left(\varrho(\widehat{\mathbf{F}}(s))\right)^r \leq 
      \Vert \widehat{\mathbf{F}}(s)^r\Vert_{\infty} =
      \Vert \widehat{\mathbf{F}}^{\widehat{\mathcal{A}}_r}(s)^r\Vert_{\infty} \\
      =\sup_{c\in \widehat{\mathcal{A}}_0} \sum_{\mathcal{P}_c} \prod_{n=0}^{r-1} \vert\widehat{\rho}_{k_n,j_n}\vert^{s}, \nonumber
    \end{gather}
    where the sum is taken over all paths $c_0\to_{(k_0,j_0)} c_1\to_{(k_1,j_1)} \cdots\to_{(k_{r-1},j_{r-1})} c_r$ in $\mathcal{P}_c$. 
    
    Fix $c\in\widehat{\mathcal{A}}_0$. Then by (i) and (iii) of Lemma \ref{pm85}, there exists a unique $d\in \mathcal{A}_0$ with $\phi(d)=c$. 
    Using the notation of Lemma \ref{pm85}, we write $\widehat{\mathcal{A}}_r=\mathcal{B}_0\cup\mathcal{B}_1\cup\mathcal{B}_2$ such that $\mathcal{B}_1\cup\mathcal{B}_2$ and $\mathcal{B}_2$ are closed in $\widehat{\mathcal{A}}_r$. Then, we can partition the paths in $\mathcal{P}_c$ based on the index of the last node in $\mathcal{B}_0\cup\mathcal{B}_1$. 
    For $q\in\{0,1\dots,r\}$, let $\mathcal{P}_c(q)$ be the set of all paths $c_0\to c_1\to\cdots\to c_r\in\mathcal{P}_c$ with $q=\max\{j\in\{0,1\dots,r\}: c_j\in\mathcal{B}_0\cup\mathcal{B}_1\}$. 
    Thus $\mathcal{P}_c=\cup_{s=0}^r\mathcal{P}_c(q)$. Define for $q\in\{0,1\dots,r\}$
    \begin{equation}\label{pm56}
      H_c(q):=\sum_{\mathcal{P}_c(q)}\prod_{n=0}^{r-1} \vert\widehat{\rho}_{k_n,j_n}\vert^{s}.
    \end{equation}
    Writing $H_c:=\sum_{q=0}^{r}H_c(q)$, we can reformulate \eqref{pm58} as
    \begin{equation}\label{pm52}
      \left(\varrho(\widehat{\mathbf{F}}(s))\right)^r \leq\sup_{c\in \widehat{\mathcal{A}}_0} H_c.
    \end{equation}

    Let $q\in\{0,1,\dots,r\}$. If $\mathbf{c}:=c_0\to_{(k_0,j_0)} c_1\to_{(k_1,j_1)} \cdots\to_{(k_{r-1},j_{r-1})} c_r\in\mathcal{P}_(c)$ and $(d_0,d_1,\dots,d_r)=\chi(\mathbf{c})$, then (i) and (viii) of Lemma \ref{pm85} give $d_j\in\mathcal{A}_r$ for all $j\leq q$, $d_0=d$ and $d_j\in\mathcal{G}$ for all $j>q$. 
    Hence if $q\geq 1$, then $d_0\to_{(k_0,j_0)} d_1\to_{(k_1,j_1)} \cdots\to_{(k_{q-1},j_{q-1})} d_q$ is a path of length $q$ in $\mathcal{A}_r$ with $d_0=d$. 
    Moreover, (viii) of Lemma \ref{pm85} also guarantees that $\widehat{A}(c_j)$ is $\widehat{\mathcal{Y}}$-close to $\widehat{A}(\phi(d_j))$ for all $j\in\{0,1,\dots,q\}$. 
    Thus by \eqref{pm59}
    \begin{equation}\label{pm55}
      \prod_{n=0}^{q-1}\vert\widehat{\rho}_{k_n,j_n}\vert^s \leq
      e^{\frac{\varepsilon}{2}sq} \prod_{n=0}^{q-1}\vert\rho_{k_n,j_n}\vert^s,
      \quad \mbox{if } q\geq 1.
    \end{equation}

    If $q\leq r-2$, then by (iv), (vi) and (viii) of Lemma \ref{pm85}, the path $d_{q+1}\to_{(k_{q+1},j_{q+1})}$ $\cdots\to_{(k_{r-1},j_{r-1})}d_r$ is of length $r-q-1$ in $\mathcal{G}$, and $\widehat{A}(c_j)$ is $\widehat{\mathcal{Y}}$-close to $\widehat{A}(\phi(\tilde{d}_j))$ for all $j\in\{q+1,q+2,\dots,r\}$, where $\tilde{d}_j\in\mathcal{A}_0$ satisfies $d_j\in A(\tilde{d}_j)$. Hence \eqref{pm59} gives 
    \begin{equation}\label{pm54}
      \prod_{n=q+1}^{r-1}\vert\widehat{\rho}_{k_n,j_n}\vert^s \leq
      e^{\frac{\varepsilon}{2}s(r-q-1)} \prod_{n=q+1}^{r-1}\vert\rho_{k_n,j_n}\vert^s,
      \quad \mbox{if } q\leq r-2.
    \end{equation}

    We apply (ix) of Lemma \ref{pm85} to obtain for $1\leq q\leq r-2$
    \begin{align*}
      H_c(q) &\leq K^{r-1}(2r+1)e^{\frac{\varepsilon}{2}sr}
      \sum_{(d_0,d_1,\dots,d_r)\in\chi(\mathcal{P}_c(q))} \vert\rho_{k_q,j_q}\vert^s
      \prod_{n=0}^{q-1}\vert\rho_{k_n,j_n}\vert^s
      \prod_{n=q+1}^{r-1}\vert\rho_{k_n,j_n}\vert^s \\
      &\leq K^{r-1}8e^{\frac{\varepsilon}{2}sr}\rho_{\max}^s(r+1)
      \left(\sum_{d=d_0\to_{(k_0,j_0)}\cdots\to_{(k_{q-1}j_{s-1})}d_q} 
      \prod_{n=0}^{q-1}\vert\rho_{k_n,j_n}\vert^s\right) \\
      &\cdot\left(\sup_{a\in\mathcal{G}} \sum_{a_0=a\to_{(\tilde{k}_0,\tilde{j}_0)}\cdots\to_{(\tilde{k}_{r-q-2},\tilde{j}_{r-q-2})}a_{r-q-1}} 
      \prod_{n=0}^{r-q-2}\vert\rho_{\tilde{k}_n,\tilde{j}_n}\vert^s\right).
    \end{align*}
    Keep in mind that according to ix) of Lemma \ref{pm85}, at most $2r+1$ paths has the same image by $\chi$, but only if they use the same edges. Due to the possible overlappings in $\mathcal{F}$ and $\widehat{\mathcal{F}}$, there might be paralel edges in the Markov diagrams, so it is possible that we can walk the same path using completely different edges. That is why we need the $K^{r-1}$ multiplier.

    Recall that $C\geq 8K^{r-1}e^{\frac{\varepsilon}{2}sr}R^{-1}\rho_{\max}^s$. 
    By \eqref{pm61} and \eqref{pm60} we obtain 
    \begin{equation}\label{pm53}
      H_c(q)\leq CR(r+1)CR^qCR^{r-q-1}=(r+1)C^3R^r. 
    \end{equation}
    Similarly, the same bound holds for the remaining three values of $q$
    \begin{align*}
      H_c(0) &\leq K^{r-1}8e^{\frac{\varepsilon}{2}sr}\rho_{\max}^s(r+1)
      \left(\sup_{a\in\mathcal{G}} \sum_{a_0=a\to_{(\tilde{k}_0,\tilde{j}_0)}\cdots\to_{(\tilde{k}_{r-2},\tilde{j}_{r-2})}a_{r-1}} 
      \prod_{n=0}^{r-2}\vert\rho_{\tilde{k}_n,\tilde{j}_n}\vert^s\right) \\
      &\leq CR(r+1)CR^{r-1}\leq (r+1)C^3R^r, \\
      H_c(r-1) &\leq K^{r-1}8e^{\frac{\varepsilon}{2}sr}\rho_{\max}^s(r+1)
      \left(\sum_{d=d_0\to_{(k_0,j_0)}\cdots\to_{(k_{r-2}j_{r-2})}d_{r-1}} 
      \prod_{n=0}^{r-2}\vert\rho_{k_n,j_n}\vert^s\right) \\
      &\leq CR(r+1)CR^{r-1}\leq (r+1)C^3R^r, \\
      H_c(r) &\leq K^{r-1}2e^{\frac{\varepsilon}{2}sr}(r+1) 
      \left(\sum_{d=d_0\to_{(k_0,j_0)}\cdots\to_{(k_{r-1}j_{r-1})}d_r} 
      \prod_{n=0}^{r-1}\vert\rho_{k_n,j_n}\vert^s\right) \\
      &\leq C(r+1)CR^r\leq (r+1)C^3R^r.
    \end{align*}
    
    Thus, $H_c=\sum_{q=0}^r H_c(q)\leq (r+1)^2C^3R^r$ for all $c\in\widehat{\mathcal{A}}_0$. Then \eqref{pm52} gives 
    \begin{equation}\label{pm51}
      \varrho(\widehat{\mathbf{F}}(s))\leq \sqrt[r]{(r+1)^2C^3}\cdot R<R_0.
    \end{equation}
    We conclude the proof by combining \eqref{pm51} with Lemma \ref{md84}
    \begin{equation*}
      \widehat{\Phi}(s)\leq \log\varrho(\widehat{\mathbf{F}}(s)) <
      \log R_0=\max\{\Phi(s),\log\varrho(\mathbf{G}(s))\}+\varepsilon.
    \end{equation*}
      
\end{proof}

We have seen in Lemma \ref{pm64} that a CPLIFS with no exact overlapping is limit-irreducible.
Therefore, as a consequence of Theorem \ref{pm80}, Theorem \ref{pm79} and Lemma \ref{pm91}, Theorem \ref{pm78} follows.

\begin{remark}\label{pm62}
  By the proof of Lemma \ref{pm91}, it also follows that the natural dimension of a CPLIFS changes continuously with respect to its parameters if the critical points which are breaking point images does not have periodic orbits.
\end{remark}

That is the natural dimension of a CPLIFS $\mathcal{F}$ changes continuously with respect to the parameters that define $\mathcal{F}$ (breaking points, slopes of functions, translation parameters) if there are no exact overlappings. This result combined with Theorem \ref{md72} yields an analogous result on the Hausdorff dimension of the attractor.

\begin{corollary}
  Let $\mathcal{F}$ be a CPLIFS whose generated self-similar IFS satisfies the ESC.
  Fix an arbitrary $\varepsilon>0$, and let $\mathfrak{P}$ be the property that the CPLIFS $\mathcal{F}^{\prime}$ satisfies  
  \begin{equation}
    |\dim_{\rm H}\Lambda -\dim_{\rm H}\widehat{\Lambda}|<\varepsilon ,
  \end{equation}
  where $\Lambda$ and $\widehat{\Lambda}$ are the attractors of $\mathcal{F}$ and $\widehat{\mathcal{F}}$ respectively.

  Then there exists a $\delta>0$ such that $\mathfrak{P}$ is a $\dim_{\rm P}$-typical property of CPLIFSs that are $\delta$-close to $\mathcal{F}$. 
\end{corollary}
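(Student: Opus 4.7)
The plan is to combine Theorem \ref{pm78} with Theorem \ref{md46} and the fact that validity of the ESC for the generated self-similar IFS is itself a $\dim_{\rm P}$-typical property of a CPLIFS (the Prokaj--Simon refinement of Hochman's theorem cited after Theorem \ref{md46}).

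First I would observe that the ESC on $\mathcal{S}$ immediately implies that $\mathcal{S}$ has no exact overlaps: if $S_{\mathbf{i}}\equiv S_{\mathbf{j}}$ for some $\mathbf{i}\neq \mathbf{j}\in [m]^n$, then $\mathrm{dist}(S_{\mathbf{i}},S_{\mathbf{j}})=0$, violating \eqref{md15}. Hence Theorem \ref{pm78} applies to $\mathcal{F}$: given $\varepsilon>0$, there exists $\delta>0$ such that every CPLIFS $\widehat{\mathcal{F}}$ which is $\delta$-close to $\mathcal{F}$ satisfies $|s_{\mathcal{F}}-s_{\widehat{\mathcal{F}}}|<\varepsilon$. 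Shrinking $\delta$ if necessary we may also assume that the slope vector of any such $\widehat{\mathcal{F}}$ is admissible in the sense of Section \ref{pm75}. By Theorem \ref{md46} applied to $\mathcal{F}$ itself, $\dim_{\rm H}\Lambda=\min\{1,s_{\mathcal{F}}\}$.

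Next I would invoke the Prokaj--Simon typicality result: for any fixed slope vector $\pmb{\rho}$, the set of translation parameters $(\mathfrak{b},\pmb{\tau})$ for which the generated self-similar system of $\mathcal{F}^{(\mathfrak{b},\pmb{\tau},\pmb{\rho})}$ fails the ESC has less than full packing dimension in the appropriate parameter space. Intersecting with the $\delta$-ball around the parameters of $\mathcal{F}$ preserves this property, so for each admissible slope vector within distance $\delta$ of $\pmb{\rho}_{\mathcal{F}}$ the set of $\delta$-close translation parameters at which the ESC fails is still of less than full packing dimension. Thus the property ``$\widehat{\mathcal{S}}$ satisfies the ESC'' is $\dim_{\rm P}$-typical within the $\delta$-neighbourhood of $\mathcal{F}$.

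For such a typical $\widehat{\mathcal{F}}$, Theorem \ref{md46} gives $\dim_{\rm H}\widehat{\Lambda}=\min\{1,s_{\widehat{\mathcal{F}}}\}$. Combining this with the estimate from Theorem \ref{pm78} and the elementary inequality $|\min\{1,a\}-\min\{1,b\}|\leq |a-b|$ yields
\[
  |\dim_{\rm H}\Lambda-\dim_{\rm H}\widehat{\Lambda}|
  =|\min\{1,s_{\mathcal{F}}\}-\min\{1,s_{\widehat{\mathcal{F}}}\}|
  \leq |s_{\mathcal{F}}-s_{\widehat{\mathcal{F}}}|<\varepsilon,
\]
so $\mathfrak{P}$ is indeed $\dim_{\rm P}$-typical among CPLIFSs $\delta$-close to $\mathcal{F}$. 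I do not expect any serious obstacle: the content is already contained in Theorem \ref{pm78} and Theorem \ref{md46}, the only point deserving care is checking that the Prokaj--Simon typicality, which is stated globally in the parameter space, localises to the $\delta$-ball, and this is automatic because a set of less than full packing dimension remains so after intersecting with an open subset of full packing dimension.
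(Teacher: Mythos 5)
Your proposal is correct and follows essentially the same route the paper intends: the paper gives no separate proof, stating only that the corollary follows by combining Theorem \ref{pm78} with the dimension formula under the ESC (Theorem \ref{md72}, equivalently Theorem \ref{md46}, which you use) together with the Prokaj--Simon fact that the ESC for the generated self-similar system is a $\dim_{\rm P}$-typical property. Your added details --- that the ESC rules out exact overlaps so Theorem \ref{pm78} applies, the Lipschitz bound $|\min\{1,a\}-\min\{1,b\}|\leq|a-b|$, and the observation that a set of less than full packing dimension stays so after intersecting with the $\delta$-ball --- are exactly the right glue and raise no gaps.
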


We close this section with an example demonstrating that the natural dimension is not necessarily continuous with respect to the parameters of a CPLIFS.
\begin{example}\label{pm49}
    Pick an arbitrary $\varepsilon>0$, and let us define the following piecewise linear functions.
    \begin{equation*}
        f_1(x)=\begin{cases}
            \frac{2}{5}x,\: x<0 \\
            \frac{1}{5}x,\: x\geq 0
        \end{cases}
        \hspace{-10pt},\quad
        f_2(x) =\frac{1}{3}x,\quad
        \widehat{f}_2(x) =\frac{1}{3}x +\varepsilon
    \end{equation*}
    Consider the CPLIFSs $\mathcal{F}=\{f_1,f_2\}$ and $\widehat{\mathcal{F}}=\{f_1,\widehat{f}_2\}$. Write $\Lambda, \widehat{\Lambda}$ for the attractors and $\Phi(s), \widehat{\Phi}(s)$ for the natural pressure functions of $\mathcal{F}$ and $\widehat{\mathcal{F}}$ respectively. These two iterated function systems are Clearly $\varepsilon$-close to each other.

    It is easy to see that $\widehat{\Lambda}$ is a Cantor set. Since $\mathcal{F}$ satisfies the strong separation property, $\widehat{s}_{\rm nat}=\dim_{\rm H}\widehat{\Lambda}$ is the unique solution of 
    \[
      \left(\frac{1}{3}\right)^s+\left(\frac{1}{5}\right)^s=1. 
    \] 

    As $0$ is the fixed point of both $f_1$ and $f_2$, the supporting interval $I$ of $\mathcal{F}$ is $[-\frac{1}{2},\frac{1}{2}]$. The slope of $f_1$ is strictly bigger than $\frac{1}{5}$ over $[-\frac{1}{2},0]$, so $s_{\rm nat}$ is expected to be strictly bigger than $\widehat{s}_{\rm nat}$. 

    Set $\Sigma:=\{1,2\}^{\mathbb{N}}$, and let $\#_2(\iiv)$ be the number of $2$-s in $\iiv\in\Sigma$. Observe that for any $\iiv\in\Sigma$ the length of $I_{\iiv}$ only depends on $\#_2(\iiv)$ and not the position of the $2$-s.
    A short calculation gives 
    \[
      \left| I_{\iiv}\right| = \left(\frac{1}{3}\right)^{n-k} 
      \frac{2^k+1}{2\cdot 5^k},\quad \mbox{ if } \#_2(\iiv)=k.  
    \]
    The natural pressure function of $\widehat{\mathcal{F}}$ is 
    \begin{align*}
      \widehat{\Phi}(s) &= \limsup_{n\to\infty}\frac{1}{n}\log\sum_{\substack{\iiv\in\Sigma :\\ |\iiv|=n}} \left|I_{\iiv}\right|^s =
      \limsup_{n\to\infty}\frac{1}{n}\log\sum_{k=0}^n\sum_{\substack{\iiv\in\Sigma :\\ |\iiv|=n \:{\rm and}\: \#_2(\iiv)=k}} \left|I_{\iiv}\right|^s =\\
      &= \limsup_{n\to\infty}\frac{1}{n}\log\sum_{k=0}^n {n\choose k}
      \left(\frac{1}{3}\right)^{s(n-k)}\left(\frac{2^k+1}{2\cdot 5^k}\right)^s 
      > \log\left( \left(\frac{1}{3}\right)^s+\left(\frac{1}{5}\right)^s \right).
    \end{align*}
    Thus, $\widehat{s}_{\rm nat} < s_{\rm nat}$ independently of $\varepsilon$.
\end{example}    

\section{Lebesgue measure of the attractor}\label{md75}

\begin{theorem}\label{pm77}
  Fix a type $\pmb{\ell}$ and a vector of slopes $\pmb{\rho}\in\mathfrak{R}^{\pmb{\ell}}$. If all elements of $\pmb{\rho}$ are positive, then for $\mathcal{L}^{L+m}$-almost every $(\mathfrak{b},\pmb{\tau})\in\mathfrak{B}^{\pmb{\ell}}\times\mathbb{R}^m$
  \begin{equation}\label{pm76}
    s_{(\mathfrak{b},\pmb{\tau},\pmb{\rho})}>1 \implies 
    \mathcal{L}\left( \Lambda^{(\mathfrak{b},\pmb{\tau},\pmb{\rho})}\right) >0,
  \end{equation}
  where $s_{(\mathfrak{b},\pmb{\tau},\pmb{\rho})}$ and $\Lambda^{(\mathfrak{b},\pmb{\tau},\pmb{\rho})}$ are the natural dimension and attractor of the CPLIFS defined by the parameters $(\mathfrak{b},\pmb{\tau},\pmb{\rho})$.
\end{theorem}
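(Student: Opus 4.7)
The plan is to reduce Theorem~\ref{pm77} to a positive-Lebesgue-measure statement for a suitable finite subsystem whose similarity dimension strictly exceeds one, and then to invoke a classical transversality-type theorem for such subsystems.

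First, I would restrict to the full-measure set of parameters $(\mathfrak{b}, \pmb{\tau})$ on which the generated self-similar IFS $\mathcal{S}$ has no exact overlappings. Any identity $S_\iiv \equiv S_\jjv$ with $\iiv \neq \jjv$ imposes one non-trivial affine relation on $(\mathfrak{b}, \pmb{\tau})$ once $\pmb{\rho}$ is fixed, so the union of these bad sets over all pairs $(\iiv, \jjv)$ is a countable union of proper affine subvarieties and has $\mathcal{L}^{L+m}$-measure zero. On the complement, Lemma~\ref{pm64} guarantees that $\mathcal{F}$ is limit-irreducible, Theorem~\ref{pm78} gives continuity of the map $(\mathfrak{b}, \pmb{\tau}) \mapsto s_{(\mathfrak{b}, \pmb{\tau}, \pmb{\rho})}$, and the set $U := \{s > 1\}$ is therefore open.

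Second, for each $(\mathfrak{b}_0, \pmb{\tau}_0) \in U$, Corollary~\ref{pm71} gives $\log \varrho(\mathbf{F}(1)) = \Phi(1) > 0$, so by Proposition~\ref{md81} there exists a finite closed irreducible subdiagram $\mathcal{C}$ with $\varrho(\mathbf{F}_{\mathcal{C}}(1)) > 1$. This $\mathcal{C}$ encodes a finite graph-directed self-similar subsystem, built out of compositions $S_{\iiv}$ of the base similarities, whose invariant set is a compact subset of $\Lambda$ and whose graph-directed pressure at $s = 1$ is strictly positive; in particular its similarity dimension exceeds one. By Lemma~\ref{pm85} the same combinatorial subsystem persists with continuously varying contractions and translations throughout a neighborhood of $(\mathfrak{b}_0, \pmb{\tau}_0)$, and the strict inequality $\varrho(\mathbf{F}_{\mathcal{C}}(1)) > 1$ persists on a smaller neighborhood by continuity of the spectral radius.

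The last step applies the Peres--Solomyak / Simon--Solomyak transversality theorem for graph-directed self-similar systems on the line with positive contractions: if the similarity dimension strictly exceeds one, then for Lebesgue-almost every admissible choice of translation parameters the invariant set has positive Lebesgue measure. In our setting the translations $t_{\iiv}$ of the finitely many compositions appearing in $\mathcal{C}$ are affine functions of $(\mathfrak{b}, \pmb{\tau})$ (with $\pmb{\rho}$ fixed), and the vertical translations $\pmb{\tau}$ alone already produce independent affine shifts of each base similarity, so the parameter-to-translation map has maximal rank; Fubini then transfers the almost-sure conclusion to our parameter space. Covering $U$ by countably many such neighborhoods and taking the union of the resulting null exceptional sets finishes the proof. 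The main obstacle will be the last step: carefully verifying the non-degeneracy of the map $(\mathfrak{b}, \pmb{\tau}) \mapsto (t_{\iiv})_{\iiv \in \mathcal{C}}$ for arbitrary finite sub-diagrams $\mathcal{C}$, and checking that the transversality hypotheses used in the classical self-similar-on-the-line theorems transfer verbatim to the graph-directed subsystems produced by Proposition~\ref{md81}.
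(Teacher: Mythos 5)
Your overall architecture (a full-measure good set of parameters, a continuity argument making $\{s>1\}$ stable on a ball, an a.e.-translations transversality theorem, and a countable cover) matches the paper's, but two of your steps have genuine gaps.

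First, the reduction to ``no exact overlaps almost everywhere'' by counting affine subvarieties is false for some fixed slope vectors: the affine relation forced by $S_{\iiv}\equiv S_{\jjv}$ can be \emph{identically} satisfied in the translation parameters. Concretely, take $m=2$, no breaking points, and both slopes equal to $\rho\in(0,1)$ with $\rho+\rho^{2}=1$; then $S_{122}(x)=\rho^{3}x+t_{1}+(\rho+\rho^{2})t_{2}$ and $S_{211}(x)=\rho^{3}x+t_{2}+(\rho+\rho^{2})t_{1}$ coincide for \emph{every} choice of $(t_{1},t_{2})$, so the ``bad set'' is the whole parameter space, not a proper subvariety, and this $\pmb{\rho}$ is admissible for Theorem \ref{pm77} (positive slopes, $\pmb{\rho}\in\mathfrak{R}^{\pmb{\ell}}$). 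Since the theorem fixes an arbitrary $\pmb{\rho}$, you cannot dispose of such relations by a dimension count; the paper instead works on the full-measure set $E_{\rm ESC}$ supplied by Hochman's theorem together with \cite[Fact~4.1]{prokaj2021piecewise}, and it is that citation, not a subvariety count, that carries the measure-theoretic content of this step. Note also that ESC, rather than mere absence of exact overlaps, is what yields $\dim_{\rm H}\Lambda=\min\{1,s_{\mathcal{F}}\}=1$ via Theorem \ref{md46}; you implicitly need this to apply Corollary \ref{pm71} at $s=1$, since that corollary is only stated for $s\in(0,\dim_{\rm H}\Lambda]$.

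Second --- and this is the decisive gap, which you flag yourself --- the transfer of the transversality conclusion. The graph-directed theorem you invoke is an a.e. statement in the subsystem's own translation space, whose dimension is the number of composed maps $S_{\iiv}$ labelling the edges of $\mathcal{C}$, in general much larger than $L+m$. The map $(\mathfrak{b},\pmb{\tau})\mapsto(t_{\iiv})_{\iiv}$ therefore parametrizes a null affine slice of that space; the $t_{\iiv}$ of words sharing letters are affinely dependent, ``maximal rank'' fails as soon as the number of edges exceeds $L+m$, and Fubini gives nothing on a measure-zero slice. (That $\pmb{\tau}$ shifts each \emph{base} similarity independently does not make the finitely many \emph{compositions} independent.) The paper avoids the problem entirely: it applies \cite[Theorem~1]{keane2003dimension} to the generated self-similar system $\mathcal{S}$ itself, whose translation vector $\mathbf{t}$ has exactly $L+m$ coordinates, and uses \cite[Fact~4.1]{prokaj2021piecewise} as a non-singular correspondence between $(\mathfrak{b},\pmb{\tau})$ and $\mathbf{t}$, so the a.e. statement transfers between spaces of equal dimension; no finite subdiagram, Proposition \ref{md81}, or Lemma \ref{pm85} enters at this stage (and for the openness of $\{s>1\}$ the paper needs only the lower semicontinuity of Theorem \ref{pm80}, not the full continuity of Theorem \ref{pm78}). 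Until you either prove the required non-degeneracy for your subsystem translations or re-route the transversality step through the generated system as the paper does, the proposal does not close.
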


\begin{proof}
  Let $E_{\rm ESC}\subset\mathfrak{B}^{\pmb{\ell}}\times\mathbb{R}^m$ be the set of parameters for which the generated self-similar IFS $\mathcal{S}^{(\mathfrak{b},\pmb{\tau},\pmb{\rho})}$ satisfies the ESC. Hochman proved \cite[Theorem~1.10]{hochman2014self} that 
  \[
    \mathcal{L}^{L+m}(\mathfrak{B}^{\pmb{\ell}}\times\mathbb{R}^m\setminus E_{\rm ESC}) =0.
  \]
  Thus it is enough to focus on the elements of $E_{\rm ESC}$. Fix an arbitrary $(\mathfrak{b},\pmb{\tau})\in E_{\rm ESC}$ and assume that $s_{(\mathfrak{b},\pmb{\tau},\pmb{\rho})}>1$. Set $\varepsilon:=\Phi_{(\mathfrak{b},\pmb{\tau},\pmb{\rho})}(1)$. The natural pressure function is strictly decreasing, and $s_{(\mathfrak{b},\pmb{\tau},\pmb{\rho})}>1$, hence $\varepsilon>0$. 

  Since our parameter space $\mathfrak{B}^{\pmb{\ell}}\times\mathbb{R}^m \subset \mathbb{R}^{m+L}$ is $\sigma$-compact, it suffices to show that \eqref{pm76} holds for $\mathcal{L}^{L+m}$-almost every element of a well defined open neighbourhood around any point $(\mathfrak{b},\pmb{\tau})\in \mathfrak{B}^{\pmb{\ell}}\times\mathbb{R}^m$. 
  As a CPLIFS whose generated self-similar IFS satisfies the ESC is always limit-irreducible, according to Theorem \ref{pm80}, there exists a $B((\mathfrak{b},\pmb{\tau}), \delta)\subset\mathfrak{B}^{\pmb{\ell}}\times\mathbb{R}^m$ open ball around $(\mathfrak{b},\pmb{\tau})$ of radius $\delta$ such that for all $(\widehat{\mathfrak{b}},\widehat{\pmb{\tau}})\in B((\mathfrak{b},\pmb{\tau}), \delta)$
  \[
    \forall s\in[0,s_{(\mathfrak{b},\pmb{\tau},\pmb{\rho})}) :\quad 
    \Phi_{(\mathfrak{b},\pmb{\tau},\pmb{\rho})}(s)-\varepsilon < \Phi_{(\widehat{\mathfrak{b}},\widehat{\pmb{\tau}},\pmb{\rho})}(s).
  \]
  It follows that 
  \begin{equation}\label{pm68}
    \forall (\widehat{\mathfrak{b}},\widehat{\pmb{\tau}})\in B((\mathfrak{b},\pmb{\tau}), \delta): \quad
    s_{(\widehat{\mathfrak{b}},\widehat{\pmb{\tau}},\pmb{\rho})}>1.
  \end{equation}

  Write $\mathbf{t}_{(\widehat{\mathfrak{b}},\widehat{\pmb{\tau}})}$ for the vector of translations of $\mathcal{S}^{(\widehat{\mathfrak{b}},\widehat{\pmb{\tau}},\pmb{\rho})}$. By \cite[Theorem~1]{keane2003dimension}, for $\mathcal{L}^{L+m}$-almost every $\mathbf{t}_{(\widehat{\mathfrak{b}},\widehat{\pmb{\tau}})}\in\mathbb{R}^{L+m}$ the assertion of the theorem holds for $\mathcal{S}^{(\widehat{\mathfrak{b}},\widehat{\pmb{\tau}},\pmb{\rho})}$. By \cite[Fact~4.1]{prokaj2021piecewise}, \eqref{pm76} holds for  $\mathcal{L}^{L+m}$-almost every $(\widehat{\mathfrak{b}},\widehat{\pmb{\tau}})\in B((\mathfrak{b},\pmb{\tau}), \delta)$. As $(\mathfrak{b},\pmb{\tau})$ was arbitrary, the assertion of the theorem follows.

\end{proof}

\bibliographystyle{abbrv}
\bibliography{CPLIFS_bib}

\begin{thebibliography}{10}

\bibitem{baker2021iterated}
S.~Baker.
\newblock Iterated function systems with super-exponentially close cylinders.
\newblock {\em Advances in Mathematics}, 379:107548, 2021.

\bibitem{barany2021super}
B.~B{\'a}r{\'a}ny and A.~K{\"a}enm{\"a}ki.
\newblock Super-exponential condensation without exact overlaps.
\newblock {\em Advances in Mathematics}, 379:107549, 2021.

\bibitem{barreira1996non}
L.~M. Barreira.
\newblock A non-additive thermodynamic formalism and applications to dimension
  theory of hyperbolic dynamical systems.
\newblock {\em Ergodic Theory and Dynamical Systems}, 16(5):871--928, 1996.

\bibitem{falconer1997techniques}
K.~J. Falconer and K.~Falconer.
\newblock {\em Techniques in fractal geometry}, volume~3.
\newblock Wiley Chichester (W. Sx.), 1997.

\bibitem{hochman2014self}
M.~Hochman.
\newblock On self-similar sets with overlaps and inverse theorems for entropy.
\newblock {\em Annals of Mathematics}, pages 773--822, 2014.

\bibitem{Hochman_2015}
M.~Hochman.
\newblock On self-similar sets with overlaps and inverse theorems for entropy
  in $\mathbb{R}^d$, 2015.

\bibitem{hutchinson1981fractals}
J.~E. Hutchinson.
\newblock Fractals and self similarity.
\newblock {\em Indiana University Mathematics Journal}, 30(5):713--747, 1981.

\bibitem{keane2003dimension}
M.~Keane, K.~Simon, and B.~Solomyak.
\newblock The dimension of graph directed attractors with overlaps on the line,
  with an application to a problem in fractal image recognition.
\newblock {\em Fund. Math}, 180(3):279--292, 2003.

\bibitem{prokaj2021piecewise}
R.~D. Prokaj and K.~Simon.
\newblock Piecewise linear iterated function systems on the line of overlapping
  construction.
\newblock {\em Nonlinearity}, 2021.

\bibitem{prokaj2022fractal}
R.~D. Prokaj, K.~Simon, and P.~Raith.
\newblock Fractal dimensions of continuous piecewise linear iterated function
  systems.
\newblock {\em Proceedings of the AMS}, 151(11):4703--4719, 2023.

\bibitem{raith1992continuity}
P.~Raith.
\newblock Continuity of the hausdorff dimension for piecewise monotonic maps.
\newblock {\em Israel Journal of Mathematics}, 80(1-2):97--133, 1992.

\end{thebibliography}

\end{document}